\documentclass[11pt]{article}
\usepackage[T1]{fontenc}
\usepackage{amsfonts}
\usepackage{amsmath}
\usepackage{amssymb}
\usepackage{amsthm}
\usepackage{bbm}
\usepackage{bm}
\usepackage{mathrsfs}
\usepackage{verbatim}
\usepackage{setspace}
\usepackage{color}
\usepackage{pdfsync}
\usepackage{enumitem}
\usepackage{graphicx}
\usepackage{subfigure}
\usepackage{tikz}
\usetikzlibrary{patterns}

\theoremstyle{plain}
\newtheorem{theorem}{Theorem}[section]
\newtheorem{proposition}[theorem]{Proposition}
\newtheorem{lemma}[theorem]{Lemma}

\theoremstyle{definition}
\newtheorem{definition}[theorem]{Definition}
\newtheorem{remark}[theorem]{Remark}

\newtheorem{example}[theorem]{Example}

\theoremstyle{remark}

\renewenvironment{thebibliography}[1]{%
\begin{oldthebibliography}{#1}%
\setlength{\baselineskip}{1em}
\linespread{.2}
\small
\setlength{\parskip}{0.25ex}%
\setlength{\itemsep}{.20em}%
}%
{%
\end{oldthebibliography}%
}
\newcommand{\eps}{\varepsilon}

\newcommand{\N}{\mathbb{N}}

\newcommand{\R}{\mathbb{R}}

\newcommand{\cF}{\mathcal{F}}

\newcommand{\cP}{\mathcal{P}}

\DeclareMathOperator{\diam}{diam}

\DeclareMathOperator{\id}{id}

\DeclareMathOperator{\Lip}{Lip}

\DeclareMathOperator*{\esssup}{ess\, sup}

\DeclareMathOperator*{\argmin}{arg\, min}

\newcommand{\as}{\mbox{-a.s.}}

\newcommand{\1}{\mathbf{1}}

\newcommand{\qforallq}{\quad\mbox{for all}\quad}
\newcommand{\qforq}{\quad\mbox{for}\quad}

\newcommand{\mykill}[1]{}

\numberwithin{equation}{section}

\usepackage[pdfborder={0 0 0}]{hyperref}
\hypersetup{
  urlcolor = black,
  pdfauthor = {Stephan Eckstein, Marcel Nutz},
  pdfkeywords = {Entropic Optimal Transport; Stability; Sinkhorn's Algorithm; IPFP},
  pdftitle = {Quantitative Stability of Regularized Optimal Transport and Convergence of Sinkhorn's Algorithm},
  pdfsubject = {Quantitative Stability of Regularized Optimal Transport and Convergence of Sinkhorn's Algorithm},
  pdfpagemode = UseNone
}

\begin{document}

\title{\vspace{-3em}
  Quantitative Stability of Regularized Optimal Transport and Convergence of Sinkhorn's Algorithm%
 }
\date{\today}

\author{
  Stephan Eckstein%
  \thanks{Department of Mathematics, ETH Zurich, seckstein@ethz.ch. Research supported by Landesforschungsf\"{o}rderung Hamburg under project LD-SODA. SE thanks Daniel Bartl, Mathias Beiglb\"{o}ck and Gudmund Pammer for fruitful discussions and helpful comments.
  }
  \and
  Marcel Nutz%
  \thanks{
  Departments of Statistics and Mathematics, Columbia University, mnutz@columbia.edu. Research supported by an Alfred P.\ Sloan Fellowship and NSF Grants DMS-1812661, DMS-2106056. MN is grateful to Guillaume Carlier, Giovanni Conforti, Flavien L\'eger and Luca Tamanini for their kind hospitality and advice.}
  }
\maketitle \vspace{-1.2em}

\begin{abstract}
We study the stability of entropically regularized optimal transport with respect to the marginals. Lipschitz continuity of the value and H\"older continuity of the optimal coupling in $p$-Wasserstein distance are obtained under general conditions including quadratic costs and unbounded marginals. The results for the value extend to regularization by an arbitrary divergence. As an application, we show convergence of Sinkhorn's algorithm in Wasserstein sense, including for quadratic cost. Two techniques are presented: The first compares an optimal coupling with its so-called shadow, a coupling induced on other marginals by an explicit construction. The second transforms one set of marginals by a change of coordinates and thus reduces the comparison of differing marginals to the comparison of differing cost functions under the same marginals.
\end{abstract}

\vspace{.3em}

{\small
\noindent \emph{Keywords} Entropic Optimal Transport; Stability; Sinkhorn's Algorithm; IPFP

\noindent \emph{AMS 2010 Subject Classification}
90C25; %
49N05 %
}
\vspace{.6em}

\section{Introduction}\label{se:intro}

Following advances allowing for computation in high dimensions, applications of optimal transport are thriving in areas such as machine learning, statistics, image and language processing (e.g., \cite{WGAN.17,ChernozhukovEtAl.17,RubnerTomasiGuibas.00,AlvarezJaakkola.18}). Regularization plays a key role in enabling efficient algorithms with provable convergence; see~\cite{PeyreCuturi.19} for a recent monograph with numerous references. Popularized in this context by~\cite{Cuturi.13}, entropic regularization is the most popular choice as it allows for Sinkhorn's algorithm (iterative proportional fitting procedure) that can be implemented at large scale using parallel computing and is analytically tractable. The entropically regularized transport problem can be formulated as
\begin{equation}\label{eq:defSentIntro}
  S_{\rm ent}^{\eps}(\mu_1, \mu_2, c) = \inf_{\pi \in \Pi(\mu_1,\mu_2)} \int c(x,y) \,\pi(dx,dy) + \eps D_{\rm KL}(\pi, \mu_1 \otimes \mu_2).
\end{equation}
Here $\Pi(\mu_1,\mu_2)$ is the set of couplings of the given marginals $\mu_1,\mu_2$ and $D_{\rm KL}(\cdot, \mu_1 \otimes \mu_2)$ is the Kullback--Leibler divergence relative to the product measure  $\mu_1 \otimes \mu_2$. Moreover, $\eps>0$ is a regularization parameter and $c$ is a cost function; the most important example is quadratic cost~$\|x-y\|^{2}$ on~$\R^{d}\times\R^{d}$. The basic idea is to solve \eqref{eq:defSentIntro} for small $\eps>0$ to obtain an approximation of the (unregularized) optimal transport problem that corresponds to $\eps=0$. Starting with~\cite{CominettiSanMartin.94,Mikami.02, Mikami.04} and followed by~\cite{ChenGeorgiouPavon.16,Leonard.12}, the convergence as $\eps\to0$ has been studied in detail and remains a very active area of investigation; see for instance \cite{AltschulerNilesWeedStromme.21, Berman.20, BerntonGhosalNutz.21, BlanchetJambulapatiKentSidford.18, ConfortiTamanini.19, GigliTamanini.21,NutzWiesel.21, Pal.19, Weed.18}.

The entropic optimal transport problem~\eqref{eq:defSentIntro} is also of its own interest. On the one hand, it is equivalent to a static formulation of the Schr\"odinger bridge problem that has a long history in physics (see \cite{Follmer.88, Leonard.14} for surveys); the dynamic Schr\"odinger bridge can be constructed by solving the static problem and combining it with a Brownian bridge. On the other hand, applied researchers have started to exploit numerous benefits resulting from entropic regularization, such as smoothness, existence of a gradient for gradient descent, improved sampling complexity (e.g., \cite{Corenflos2021differentiable,CuturiTeboulVert.19,genevay2019sample,GeneveyEtAl.16}), among many others.  Thus, the regularization is increasingly seen as an advantage rather than an approximation error; notions such as Sinkhorn divergence \cite{GenevayPeyreCuturi.18,RamdasGarciaTrillosCuturi.17} have become tools of their own right. We note that as long as $\eps>0$ is fixed, we can assume without loss of generality that $\eps=1$, simply dividing~\eqref{eq:defSentIntro} by $\eps$ and using the cost function $c/\eps$. Hence, we shall drop~$\eps$ from the formulation in our results.

The main objective of the present study is to establish and quantify the stability of the value~$S_{\rm ent}$ and its optimal coupling~$\pi^{*}$ with respect to the input marginals~$\mu_{1}$ and $\mu_{2}$, or more generally $\mu_1, \dots, \mu_N$ in the multi-marginal setting. Distances will be quantified by Wasserstein distance~$W_{p}$, thus allowing for comparison of measures with different supports, discrete and continuous measures, etc. We aim for results including unbounded marginals, replacing compactness by suitable integrability conditions such as the subgaussian tails in~\cite{MenaWeed.19}. Schr\"odinger bridges are one application where unbounded supports are very natural, as the Brownian dynamics produce unbounded intermediate marginals even if the boundary data are bounded. In this context, costs are usually quadratic, so that unbounded and non-Lipschitz cost functions are necessary. Even in applications with bounded costs, one may be interested in estimates with constants that do not depend on~$\|c\|_{\infty}$, especially not exponentially. 

To the best of our knowledge, the first stability result for entropic optimal transport is due to~\cite{CarlierLaborde.20}. Here, costs are uniformly bounded, and all marginals are equivalent to a common reference measure (e.g., Lebesgue), with densities uniformly bounded above and below. Within these families, distances of measures can be quantified by the $L^{p}$ norm of the difference of their densities. The authors show that the Schr\"odinger potentials (i.e., the dual entropic optimizers) are Lipschitz continuous relative to the marginals in $L^{p}$, for $p=2$ and $p=\infty$. This result is obtained by a differential approach establishing invertibility of the Schr\"odinger system.
More recently, \cite{GhosalNutzBernton.21b} obtain the first result on stability in a general setting. Using a geometric approach called cyclical invariance, continuity of optimizers is established in the sense of weak convergence. The geometric method avoids integrability conditions almost entirely and indeed remains valid even if the value of~\eqref{eq:defSentIntro} is infinite. On the other hand, the method relies on differentiation of measures which essentially forces the marginal spaces to be finite-dimensional. More importantly, the continuity result is purely qualitative, and that is the main difference with the present results. Most recently, and partly concurrently with the present study, a beautiful result of~\cite{DeligiannidisDeBortoliDoucet.21} establishes the uniform stability of Sinkhorn's  algorithm with respect to the marginals, in a bounded setting.  As a consequence, the authors deduce Lipschitzianity in~$W_{1}$ of the optimal couplings with respect to the marginals; the assumptions include bounded Lipschitz costs and bounded spaces. The argument is based on the Hilbert--Birkhoff projective metric which has also been used successfully to show linear convergence of Sinkhorn's algorithm \cite{ChenGeorgiouPavon.16b,FranklinLorenz.89}. A crucial additional step accomplished in~\cite{DeligiannidisDeBortoliDoucet.21}  is to pass from this metric to a more standard norm on the potentials. The techniques involving the projective metric are less probabilistic in nature, which may be one reason why it is wide open how to relax the boundedness conditions. We remark that the initial result of~\cite{CarlierLaborde.20} also covered the multimarginal problem which has recently become popular due to its role in the Wasserstein barycenter problem~\cite{AguehCarlier.11,CarlierEichingerKroshnin.20}. At least in the context of~\cite{Carlier.21}, it was observed that Hilbert--Birkhoff arguments may not be equally successful beyond two marginals. Finally, we mention the follow-up~\cite{NutzWiesel.22} on the continuity of the potentials in unbounded settings.

We apply our stability result to Sinkhorn's algorithm for $N=2$ marginals. It is well known that each iterate~$\pi^{n}$ of the algorithm solves an entropic optimal transport problem between its own marginals, and moreover these marginals converge to the given marginals~$\mu_{i}$. Thus, the convergence can be seen as a particular instance of stability with respect to marginals and our results apply. Sinkhorn's algorithm has been studied over almost a century (see~\cite{PeyreCuturi.19} for numerous references); the most general convergence results in this literature are due to~\cite{Ruschendorf.95}. While they treat costs that are merely measurable and show $\pi^{n}\to\pi^{*}$ in total variation, they do not cover unbounded functions like the quadratic cost in most examples, especially when both marginals have unbounded support. Applying stability results under regularity of~$c$ turns out to be fruitful in this regard: we not only  obtain the convergence to the optimal value  and $\pi^{n}\to\pi^{*}$ in Wasserstein distance, but even a rate of convergence. The conditions are sufficiently general to cover quadratic cost with subgaussian marginals.

\subsection{Synopsis}
Our first result, detailed in Theorem~\ref{th:valueConv}, is the continuity of the value $S_{\rm ent}$ with respect to the marginals in $p$-Wasserstein distance under generic conditions. If the cost $c$ is a product of suitably integrable Lipschitz functions, then $S_{\rm ent}$ is also Lipschitz. This includes quadratic costs on $\R^{d}$ with possibly unbounded marginal supports. The proof is based on comparing the optimizer~$\pi^{*}$ with the ``shadow'' coupling it induces on  other marginals. The shadow is a particular projection that we construct explicitly by gluing, controlling both the distance to $\pi^{*}$ and its divergence. The construction is simple and flexible, thus potentially useful for other purposes. For instance, Theorem~\ref{th:valueConv} holds for a general class of optimal transport problems regularized by a divergence~$D_{f}$ as previously considered in~\cite{DiMarinoGerolin.20b}, Kullback--Leibler divergence is a particular case. Other divergences, especially quadratic, are being used in some applications where entropic regularization performs poorly, usually because non-equivalent optimizers are desired or weak penalization (small $\eps$) causes numerical instabilities, see~\cite{blondel18quadratic,EssidSolomon.18,LorenzMannsMeyer.21}. Theoretical results are scarce so far as these regularizations are less tractable. 

By way of strong convexity, the continuity of the value~$S_{\rm ent}$ in Theorem~\ref{th:valueConv} leads to the continuity of the optimizer $\pi^{*}$ with respect to the marginals. Theorem~\ref{th:opti} states a nonasymptotic inequality bounding the distance of two entropic optimizers for different marginals in terms of the $W_{p}$ distance of the marginals. It shows in particular that the map $(\mu_{1},\dots,\mu_{N})\mapsto \pi^{*}$ is $1/(2p)$-H\"{o}lder in $W_{p}$. Exploiting a Pythagorean-type property of relative entropy to implement the strong convexity, we achieve an unbounded setting requiring only a transport inequality; i.e., a control of Wasserstein distance through entropy. This condition holds as soon as the marginals have a finite exponential moment; in particular, the result covers quadratic costs when marginals are $\sigma^{2}$-subgaussian for some (arbitrarily small)~$\sigma$. We remark that Theorem~\ref{th:valueConv} is the first quantitative stability result for unbounded costs, and in settings without differentiation of measures as assumed in~\cite{GhosalNutzBernton.21b}, even the qualitative result alone would be novel.

One noteworthy feature of Theorem~\ref{th:opti} is that the constants grow only linearly in $c$, which is particularly important for the regularized transport problem~\eqref{eq:defSentIntro}: here the effective cost function is $\tilde c:=c/\eps$ and $\eps$ is usually small. Many results on entropic optimal transport feature constants depending exponentially on the cost, typically $\exp(\|\tilde c\|_{\infty})$ or $\exp(\|\tilde c\|_{\infty} + \Lip \tilde c)$, including all previous results on stability that we are aware of. Even for well-behaved~$c$ on a fairly small domain, a choice like $\eps=.01$ then leads to constants far exceeding $e^{100}$, potentially a concern in practical considerations.

Our second continuity result, Theorem~\ref{th:second}, aims at improving the H\"older exponent in Theorem~\ref{th:opti} under the more restrictive condition that the cost~$c$ is bounded (spaces may still be unbounded). For instance, we show $1/(p+1)$-H\"{o}lder continuity in $W_{p}$. More generally, Theorem~\ref{th:second} yields the H\"older exponent $p/(p+1)q$ from $W_{p}$ to $W_{q}$; to wit, we can improve the exponent by measuring the distance of the marginals in a stronger norm. In particular, $p=\infty$ leads to a Lipschitz result into $W_{1}$. This choice also eliminates exponential dependence of the constant on the cost. In fact, we prove that the Lipschitz constant is \emph{sharp} in a nontrivial discrete example. This may be surprising given that the idea of proof is somewhat circuitous and that many estimates in this area are thought to be overly conservative.

Indeed, Theorem~\ref{th:second} is based on a novel approach that may be of independent interest; the basic idea is to reduce the problem of differing marginals to one of differing cost functions (under the same marginals). In the latter problem, optimizers are measure-theoretically equivalent and comparable in the sense of Kullback--Leibler divergence. Our starting point is the observation that the regularization in our problem depends only on the relative density, but not on the geometry of the distributions. In the simplest case, a $W_{p}$-optimal coupling of the differing marginals induces an invertible transport map~$T$ that can be used as change of coordinates to achieve identical marginals. The cost is transformed at the same time and we end up comparing~$c$ with~$c\circ T$. For this comparison, we can apply a separate result (Proposition~\ref{pr:costStability}) based on an entropy calculation.

The application to Sinkhorn's algorithm is summarized in Theorem~\ref{th:sinkhorn} which states convergence of the entropic cost and of the Sinkhorn iterates~$\pi^{n}$ themselves. The qualitative and quantitative results follow from Theorem~\ref{th:valueConv} and Theorem~\ref{th:opti}. In essence, the stability results turn a convergence rate for the Sinkhorn marginals into a convergence rate for~$\pi^{n}\to\pi^{*}$. We use the sublinear rate for the marginals as obtained in~\cite{Leger.21}. As noted there, these rates are likely suboptimal---for bounded cost functions, linear convergence of Sinkhorn's algorithm is well known~\cite{Carlier.21, ChenGeorgiouPavon.16b, FranklinLorenz.89}---our focus at this stage is on having \emph{some} quantitative control.

The organization of this paper is simple: Section~\ref{se:setting} details the setting, Section~\ref{se:results} presents the main results, and Section~\ref{se:proofs} contains  the proofs.

\section{Setting and Notation}\label{se:setting}

Let $(Y,d_{Y})$ be a Polish space and $\mathcal{P}(Y)$ its set of Borel probability measures. Given $p\in[1,\infty)$, we denote by $\mathcal{P}_p(Y)$ the subset of measures $\mu$ with finite $p$-th moment; i.e., $\int d_{Y}(x,\hat{x})^p \,\mu(dx) < \infty$ for some (and then all) $\hat{x} \in Y$. For $p=\infty$, we define  $\mathcal{P}_\infty(Y)$ as the measures with bounded support. The $p$-Wasserstein distance $W_p(\mu, \nu)$ between $\mu,\nu \in \mathcal{P}_p(Y)$ is defined via%
\begin{align*}
W_p(\mu, \nu)^p &= \inf_{\pi \in \Pi(\mu, \nu)} \int d_{Y}(x, y)^p \,\pi(dx, dy), \quad p\in [1,\infty),\\
W_\infty(\mu, \nu) &= \inf_{\pi \in \Pi(\mu, \nu)}\esssup_{(x, y) \sim \pi} d_{Y}(x, y),
\end{align*}
while $\|\mu - \nu\|_{TV} = \sup_{A\subseteq Y \,\text{Borel}} |\mu(A) - \nu(A)|$ is the total variation distance of $\mu, \nu \in \mathcal{P}(Y)$.

Fix $N\in \N$ and let $(X_i, d_{X_i})$, $i=1, \dots, N$ be Polish probability spaces with measures $\mu_{i}\in\cP(X_{i})$. We denote by $X = \prod_{i=1}^N X_i$ the product space and write $x\in X$ as $x = (x_1, \dots, x_N)$. When $p\in[1,\infty]$ is given, it will be convenient to use on~$X$ the particular product metric
\begin{align*}
  d_{X, p}(x, y):=
   \begin{cases}
   \big(\sum_{i=1}^N d_{X_i}(x_i, y_i)^p\big)^{1/p}, &\quad p\in[1,\infty),\\
   \max_{i=1,\dots,N} d_{X_i}(x_i, y_i), &\quad p=\infty.
\end{cases}
\end{align*}  
Unless otherwise noted, $p$-Wasserstein distances on~$X$ are understood with respect to $d_{X,p}$. Similarly, the distance between two tuples of marginals will often be quantified by
\begin{align*}
W_{p}(\mu_1, \dots, \mu_N;\tilde\mu_1, \dots, \tilde\mu_N) :=
\begin{cases}
\big(\sum_{i=1}^N W_p(\mu_i, \tilde{\mu}_i)^p\big)^{1/p}, &\quad p\in[1,\infty), \\
\max_{i=1,\dots,N} W_\infty(\mu_i, \tilde{\mu}_i), &\quad p=\infty.
\end{cases}
\end{align*} 
Given a Lipschitz function $c : X \rightarrow \mathbb{R}$, we denote by $\Lip_p(c)$ its Lipschitz constant with respect to $d_{X, p}$.

For a strictly convex, lower bounded function $f : \mathbb{R}_+ \rightarrow \mathbb{R}$ with $f(1) = 0$ and $\lim_{x \rightarrow \infty} f(x)/x = \infty$, the $f$-divergence $D_f(\mu,\nu)$ between probabilities $\mu,\nu$ on the same space is
\[
 D_f(\mu, \nu) := \int f\left(\frac{d\mu}{d\nu}\right) \,d\nu \qforq \mu\ll \nu
\]
and $D_f(\mu, \nu) :=\infty$ for $\mu\not\ll\nu$. The main example of interest to us is the Kullback--Leibler divergence (relative entropy) $D_{\rm KL}(\mu,\nu)$ which corresponds to the choice $f(x):=x\log x$. We always assume that $(\mu, \nu)\mapsto D_f(\mu, \nu)$ is lower semicontinuous for weak convergence. This holds for $D_{\rm KL}$, and more generally whenever $D_{f}$ has a suitable variational representation. %

Given $\mu_i \in \mathcal{P}(X_i)$ and a continuous, nonnegative\footnote{The lower bound is easily relaxed in view of the behavior of~\eqref{eq:defS} under shifts of~$c$.} cost function $c\in L^{1}(\mu_1 \otimes\dots \otimes \mu_N)$, we can now introduce the regularized transport problem 
\begin{equation}\label{eq:defS}
S(\mu_1, \dots, \mu_N, c) = \inf_{\pi \in \Pi(\mu_1, \dots, \mu_N)} \int c \,d\pi + D_f(\pi, \mu_1 \otimes \dots \otimes \mu_N),
\end{equation}
where $\Pi(\mu_1, \dots, \mu_N)\subset \cP(X)$ denotes the set of couplings of the marginals~$\mu_{i}$. Note that $S(\mu_1, \dots, \mu_N, c)<\infty$ by way of $\pi:=\mu_1 \otimes \dots \otimes \mu_N$. A standard argument of compactness and strict convexity then shows that~\eqref{eq:defS} admits a unique optimizer $\pi^{*}\in\Pi(\mu_1, \dots, \mu_N)$. When  $p\in[1,\infty)$ is given, we always assume that $c$ has growth of order $p$,
\begin{equation}\label{eq:growthcond}
  |c(x)|\leq C(1+d_{X, p}(x,\hat{x})^p)
\end{equation}
for some $C>0$ and  $\hat{x} \in X$, whereas for $p=\infty$ the meaning is that $c$ is bounded. For marginals $\mu_{i}\in \mathcal{P}_{p}(X_i)$, this ensures in particular that $c\in L^{1}(\pi)$ for any coupling~$\pi$.

While some of our results below hold for general divergences, we  use the notation $S_{\rm ent}$ in results specific to the entropic version, so that~\eqref{eq:defS} becomes 
\begin{equation}\label{eq:defSent}
  S_{\rm ent}(\mu_1, \dots, \mu_N, c) = \inf_{\pi \in \Pi(\mu_1, \dots, \mu_N)} \int c \,d\pi + D_{\rm KL}(\pi, \mu_1 \otimes \dots \otimes \mu_N).
\end{equation}

\begin{remark}\label{rk:otherRef}
  A variation of~\eqref{eq:defSent} uses entropy relative to a reference measure~$\hat P$ different from the product of the marginals,
\begin{equation}\label{eq:defSentOtherref}
  \inf_{\pi \in \Pi(\mu_1, \dots, \mu_N)} \int c \,d\pi + D_{\rm KL}(\pi, \hat P),
\end{equation}  
for instance (normalized) Lebesgue measure for problems with absolutely continuous marginals on $\R^{d}$. 
  Of course, a compatibility condition between~$\hat P$ and the marginals is necessary to guarantee that~\eqref{eq:defSentOtherref} is finite. As long as $\hat P=\hat P_{1}\otimes \cdots\otimes \hat P_{N}$ is a product measure, a standard computation shows that the optimizer~$\pi^{*}$ of this problem is the same as the one of~\eqref{eq:defSent}. Therefore, our stability results for~\eqref{eq:defSent} carry over to~\eqref{eq:defSentOtherref}.
\end{remark}

\section{Results}\label{se:results}

\subsection{Shadows and Preliminaries}\label{se:shadowsAndPrelims}

Given $\pi \in \Pi(\mu_1, \dots, \mu_N)$, we introduce a coupling $\tilde\pi \in \Pi(\tilde\mu_1, \dots, \tilde\mu_N)$ of different marginals through a gluing construction. Intuitively, for $N=2$, the transport $\tilde\pi$ is obtained by concatenating three transports: move $\tilde\mu_{1}$ to $\mu_{1}$ using a $W_{p}$-optimal transport, then follow the transport $\pi$ moving $\mu_{1}$ into $\mu_{2}$, and finally move $\mu_{2}$ to $\tilde\mu_{2}$ using a $W_{p}$-optimal transport. 
We think of~$\tilde\pi$ as a coupling of $\tilde\mu_1, \tilde\mu_2$  that ``shadows'' $\pi \in \Pi(\mu_1,\mu_2)$ as closely as possible given the differing marginals. The formal definition reads as follows. %

\begin{definition}[Shadow]\label{de:shadow}
  Let $p\in [1,\infty]$ and $\mu_i, \tilde{\mu}_i \in \mathcal{P}_p(X_i)$,  $i=1, \dots, N$. Let $\kappa_{i} \in \Pi(\mu_i, \tilde\mu_i)$ be a coupling attaining $W_{p}(\mu_i,\tilde\mu_i)$ and  $\kappa_{i}=\mu_i \otimes K_i$ a disintegration.
  Given $\pi \in \Pi(\mu_1, \dots, \mu_N)$, its \emph{shadow} $\tilde\pi \in \Pi(\tilde\mu_1, \dots, \tilde\mu_N)$ is defined as the second marginal of $\pi \otimes K\in\cP(X\times X)$, where the kernel $K : X \rightarrow \mathcal{P}(X)$ is defined as
	$%
	K(x) = K_1(x_1) \otimes \dots \otimes  K_N(x_N). 
	$%
\end{definition}

In general, the $W_{p}$-optimal kernel~$K_{i}$ need not be unique, so that there can in fact be more than one choice for the  shadow. Any choice will do in what follows, and we shall speak of ``the'' shadow despite the abuse of language. As detailed in Remark~\ref{rk:projection}, the shadow can also be understood as a particular choice of a $W_{p}$-projection of~$\pi$ onto~$\Pi(\tilde\mu_1, \dots, \tilde\mu_N)$. The crucial additional property of the shadow is that its divergence is controlled by the one of~$\pi$.

\begin{lemma}\label{le:shadow}
		Let $p\in [1,\infty]$ and $\mu_i, \tilde{\mu}_i \in \mathcal{P}_p(X_i)$,  $i=1, \dots, N$. Given $\pi \in \Pi(\mu_1, \dots, \mu_N)$, its shadow $\tilde{\pi} \in \Pi(\tilde{\mu}_1, \dots, \tilde{\mu}_N)$ satisfies
	\begin{align*}
	W_p(\pi, \tilde{\pi}) &= W_{p}(\mu_1, \dots, \mu_N;\tilde\mu_1, \dots, \tilde\mu_N),\\
	D_f(\tilde{\pi}, \tilde\mu_1 \otimes \dots \otimes \tilde\mu_N) &\leq D_f(\pi, \mu_1 \otimes \dots \otimes \mu_N).
	\end{align*}
\end{lemma}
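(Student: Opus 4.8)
The plan is to read off both assertions from the single joint measure $\pi\otimes K$ on $X\times X$, whose first marginal is $\pi$ and whose second marginal is $\tilde\pi$ by construction. For the Wasserstein identity it will serve as an explicit competitor coupling of $\pi$ and $\tilde\pi$; for the divergence bound it exhibits $\tilde\pi$ and $\tilde\mu_1\otimes\cdots\otimes\tilde\mu_N$ as the images of $\pi$ and $\mu_1\otimes\cdots\otimes\mu_N$ under one and the same Markov kernel, after which the data processing inequality for $f$-divergences does the work.

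For the first claim, observe that under $\pi\otimes K$, writing the two $X$-valued coordinates as $x$ and $y$, the pair $(x_i,y_i)$ has law $\mu_i\otimes K_i=\kappa_i$: indeed $x\sim\pi$ has $i$-th marginal $\mu_i$, and conditionally on $x$ the coordinate $y_i$ has law $K_i(x_i)$, which depends on $x_i$ only. Hence for $p\in[1,\infty)$, using $d_{X,p}(x,y)^p=\sum_i d_{X_i}(x_i,y_i)^p$,
\[
\int d_{X,p}(x,y)^p\,(\pi\otimes K)(dx,dy)=\sum_{i=1}^N\int d_{X_i}(x_i,y_i)^p\,\kappa_i(dx_i,dy_i)=\sum_{i=1}^N W_p(\mu_i,\tilde\mu_i)^p,
\]
so that $W_p(\pi,\tilde\pi)\le W_p(\mu_1,\dots,\mu_N;\tilde\mu_1,\dots,\tilde\mu_N)$; for $p=\infty$ the same computation with essential suprema in place of integrals gives $W_\infty(\pi,\tilde\pi)\le\max_i W_\infty(\mu_i,\tilde\mu_i)$. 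For the reverse inequality, note that any $\gamma\in\Pi(\pi,\tilde\pi)$, pushed forward under $(x,y)\mapsto(x_i,y_i)$, is a coupling of $\mu_i$ and $\tilde\mu_i$, so its $d_{X_i}^p$-cost (resp. essential supremum of $d_{X_i}$) is at least $W_p(\mu_i,\tilde\mu_i)^p$ (resp. $W_\infty(\mu_i,\tilde\mu_i)$); since $d_{X,p}(x,y)^p=\sum_i d_{X_i}(x_i,y_i)^p$ (resp. $d_{X,\infty}=\max_i d_{X_i}$), summing over $i$ (resp. taking the maximum) and then the infimum over $\gamma$ yields the matching lower bound.

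For the divergence inequality we may assume $D_f(\pi,\mu_1\otimes\cdots\otimes\mu_N)<\infty$, hence $\pi\ll\mu_1\otimes\cdots\otimes\mu_N$ with density $h:=\frac{d\pi}{d(\mu_1\otimes\cdots\otimes\mu_N)}$. Since the same kernel $K$ is glued on both sides, $\pi\otimes K\ll(\mu_1\otimes\cdots\otimes\mu_N)\otimes K$ with density $(x,y)\mapsto h(x)$, and as $K$ is a probability kernel,
\[
D_f\big(\pi\otimes K,\ (\mu_1\otimes\cdots\otimes\mu_N)\otimes K\big)=\int f(h(x))\,(\mu_1\otimes\cdots\otimes\mu_N)(dx)=D_f(\pi,\mu_1\otimes\cdots\otimes\mu_N).
\]
Because $K(x)=K_1(x_1)\otimes\cdots\otimes K_N(x_N)$ factorizes and $\mu_1\otimes\cdots\otimes\mu_N$ is a product, under $(\mu_1\otimes\cdots\otimes\mu_N)\otimes K$ the pairs $(x_i,y_i)$ are independent across $i$ with law $\kappa_i$, so the second marginal of $(\mu_1\otimes\cdots\otimes\mu_N)\otimes K$ is $\tilde\mu_1\otimes\cdots\otimes\tilde\mu_N$, while that of $\pi\otimes K$ is $\tilde\pi$. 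It then remains to note that pushing forward under the projection $T:(x,y)\mapsto y$ cannot increase the $f$-divergence: if $\alpha\ll\beta$ with density $g$, then $T_\#\alpha\ll T_\#\beta$ with density $y\mapsto\mathbb{E}_\beta[g\mid T=y]$, and Jensen's inequality for the convex $f$ gives $D_f(T_\#\alpha,T_\#\beta)\le\int\mathbb{E}_\beta[f(g)\mid T=y]\,(T_\#\beta)(dy)=\int f(g)\,d\beta=D_f(\alpha,\beta)$. Applying this with $\alpha=\pi\otimes K$ and $\beta=(\mu_1\otimes\cdots\otimes\mu_N)\otimes K$ yields $D_f(\tilde\pi,\tilde\mu_1\otimes\cdots\otimes\tilde\mu_N)\le D_f(\pi,\mu_1\otimes\cdots\otimes\mu_N)$.

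Most of this is bookkeeping; the one step that needs care is the divergence estimate, specifically the two facts that gluing the product kernel $K$ onto the product measure $\mu_1\otimes\cdots\otimes\mu_N$ again produces a product, namely $\tilde\mu_1\otimes\cdots\otimes\tilde\mu_N$, and that the Radon--Nikodym density is left unchanged when the same kernel is glued onto numerator and denominator, so that the only loss is incurred in the final marginalization via Jensen. For $p=\infty$ one should also recall that $W_\infty$-optimal couplings $\kappa_i$ exist, so that the disintegration in Definition~\ref{de:shadow} is indeed available.
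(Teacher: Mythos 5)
Your proof is correct and follows essentially the same route as the paper: the coupling $\pi\otimes K$ yields the upper bound and the marginal projections the lower bound for the Wasserstein identity, while the divergence bound rests on the observations that gluing the same kernel leaves the density $d\pi/d(\mu_1\otimes\cdots\otimes\mu_N)$ unchanged, that $(\mu_1\otimes\cdots\otimes\mu_N)\otimes K$ has second marginal $\tilde\mu_1\otimes\cdots\otimes\tilde\mu_N$, and that marginalizing can only decrease $D_f$ by Jensen. The only cosmetic difference is that you prove the data processing inequality inline via conditional expectations given the projection, whereas the paper isolates it as a separate lemma proved with reverse disintegrations --- the same argument in different clothing.
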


To study the continuity properties of regularized optimal transport, we need to compare the cost of two couplings $\pi,\tilde\pi$ in the unregularized transport problem. If $c$ is $L$-Lipschitz, the following inequality holds for all probability measures $\pi,\tilde\pi$. We formulate an abstract condition to cover more general cases, especially Example~\ref{ex:quadraticccond} below.

\begin{definition}\label{de:ccond}
 Let $p\in [1,\infty]$ and $\mu_i, \tilde{\mu}_i \in \mathcal{P}_p(X_i)$,  $i=1, \dots, N$. For a constant $L\geq0$, we say that $c$ satisfies~\eqref{eq:ccond} if
  \begin{equation}\label{eq:ccond}
    \left| \int c \, d(\pi- \tilde\pi)\right| \leq  L W_{p}(\pi,\tilde\pi) \tag{${\rm A}_{L}$}
  \end{equation}
  for all $\pi \in \Pi(\mu_1, \dots, \mu_N)$ and $\tilde\pi \in \Pi(\tilde\mu_1, \dots, \tilde\mu_N)$.\footnote{In fact, \eqref{eq:ccond} will only ever be used when one coupling is the shadow of the other, but that restriction does not seem to substantially enhance the applicability.}
\end{definition}
The most important application is quadratic cost.

\begin{example}\label{ex:quadraticccond}
  For $p=2$ and cost $c(x_{1},x_{2})=\|x_{1}-x_{2}\|^{2}$ on Euclidean space $\R^{d}\times\R^{d}$,  we have that \eqref{eq:ccond} holds with
  $$
    L := \sqrt{2}\left[M(\mu_{1}) + M(\tilde\mu_{1}) + M(\mu_{2}) + M(\tilde\mu_{2})\right]
  $$
  where $M(\mu):=(\int \|x\|^{2}\,\mu(dx))^{1/2}$ for $\mu\in\cP(\R^{d})$.
\end{example}

The example is a special case of the following observation.

\begin{lemma}\label{le:ccond}
  Let $p\in [1,\infty)$. Let $c(x)=f(x)g(x)$ where $f,g$ are Lipschitz and have growth of order at most $p-1$. Then~\eqref{eq:ccond} holds with a constant~$L$ depending only on the Lipschitz and growth constants of $f,g$ and the $p$-th moments of $\mu_{i},\tilde\mu_{i}$, $i=1,\dots,N$. For $p=\infty$, the analogue holds with dependence on the bounds of $f,g$ instead of moments.
\end{lemma}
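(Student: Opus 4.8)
The plan is to estimate $\left|\int c\,d(\pi-\tilde\pi)\right|$ for arbitrary $\pi\in\Pi(\mu_1,\dots,\mu_N)$ and $\tilde\pi\in\Pi(\tilde\mu_1,\dots,\tilde\mu_N)$ by coupling the two measures optimally in $W_p$. Let $\gamma\in\Pi(\pi,\tilde\pi)\subset\cP(X\times X)$ attain $W_p(\pi,\tilde\pi)$, so that writing points of $X\times X$ as $(x,y)$ with $x,y\in X$ we have $\int d_{X,p}(x,y)^p\,\gamma(dx,dy)=W_p(\pi,\tilde\pi)^p$. Then $\int c\,d(\pi-\tilde\pi)=\int (c(x)-c(y))\,\gamma(dx,dy)$, and the task reduces to bounding $\int|c(x)-c(y)|\,\gamma(dx,dy)$ using the product structure $c=fg$.

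The first key step is the pointwise Lipschitz-type estimate for a product of functions: since $c(x)-c(y)=f(x)(g(x)-g(y))+g(y)(f(x)-f(y))$, and $f,g$ are Lipschitz (say $\Lip_p(f)\le L_f$, $\Lip_p(g)\le L_g$) with growth of order at most $p-1$ (say $|f(x)|\le C_f(1+d_{X,p}(x,\hat x)^{p-1})$ and likewise for $g$), we obtain
\[
  |c(x)-c(y)| \leq C_f\bigl(1+d_{X,p}(x,\hat x)^{p-1}\bigr)\,L_g\, d_{X,p}(x,y) + C_g\bigl(1+d_{X,p}(y,\hat x)^{p-1}\bigr)\,L_f\, d_{X,p}(x,y).
\]
Integrating against $\gamma$ and applying H\"older's inequality with exponents $p/(p-1)$ and $p$ to each term — pairing the growth factor (which after raising to the power $p/(p-1)$ is integrable because $\pi,\tilde\pi$ have finite $p$-th moments, which follows from the $\mu_i,\tilde\mu_i\in\cP_p(X_i)$ assumption and the product metric) with $d_{X,p}(x,y)$ — yields
\[
  \int |c(x)-c(y)|\,\gamma(dx,dy) \leq \Bigl(A_f L_g + A_g L_f\Bigr)\, W_p(\pi,\tilde\pi),
\]
where $A_f$ is a bound on $\bigl(\int (1+d_{X,p}(x,\hat x)^{p-1})^{p/(p-1)}\,\pi(dx)\bigr)^{(p-1)/p}$, controlled by $C_f$ and the $p$-th moments of the $\mu_i$, and similarly $A_g$ in terms of $C_g$ and the $p$-th moments of the $\tilde\mu_i$. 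Setting $L:=A_f L_g + A_g L_f$ gives exactly \eqref{eq:ccond}, with $L$ depending only on the stated quantities. A minor point to handle cleanly is converting moments with respect to $d_{X,p}$ into moments with respect to the individual $d_{X_i}$; for $p\in[1,\infty)$ this is immediate from the definition $d_{X,p}(x,\hat x)^p=\sum_i d_{X_i}(x_i,\hat x_i)^p$ and subadditivity.

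For $p=\infty$ the argument is simpler: growth of order $p-1$ is vacuous and the relevant hypothesis is that $f,g$ are bounded on the (bounded) supports of the marginals, say $|f|\le \|f\|_\infty$ and $|g|\le\|g\|_\infty$ there; then $|c(x)-c(y)|\le \|f\|_\infty \Lip(g)\,d_{X,\infty}(x,y)+\|g\|_\infty\Lip(f)\,d_{X,\infty}(x,y)$ holds $\gamma$-a.s., and taking the essential supremum gives $\left|\int c\,d(\pi-\tilde\pi)\right|\le (\|f\|_\infty\Lip(g)+\|g\|_\infty\Lip(f))\,W_\infty(\pi,\tilde\pi)$. The main obstacle — though it is more bookkeeping than genuine difficulty — is keeping the H\"older exponents and the moment bounds organized so that the growth order $p-1$ is exactly what makes the integrals finite and the final constant independent of $\pi,\tilde\pi$ (only through the fixed marginal moments); one must also verify that Example~\ref{ex:quadraticccond} is recovered, which corresponds to $p=2$, $f(x)=g(x)=x_1-x_2$ up to the obvious identification, where $\Lip_2(f)=\Lip_2(g)=\sqrt2$ and the growth/moment bound produces the stated constant involving $M(\mu_i)$ and $M(\tilde\mu_i)$.
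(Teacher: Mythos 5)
Your proposal is correct and follows essentially the same route as the paper: the same product decomposition $c(x)-c(y)=f(x)(g(x)-g(y))+g(y)(f(x)-f(y))$, integrated against a $W_p$-optimal coupling of $\pi$ and $\tilde\pi$, with H\"older at exponents $p/(p-1)$ and $p$ so that the growth of order $p-1$ is exactly absorbed by the $p$-th moments and the remaining factor is bounded by $\Lip_p$ times $W_p(\pi,\tilde\pi)$. The only cosmetic difference is that you apply the Lipschitz bound pointwise before H\"older, while the paper applies H\"older first and then bounds $\|g(x)-g(y)\|_{L^p(\kappa)}$; these are the same estimate.
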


This criterion generalizes to a product $c(x)=c_{1}(x)\cdots c_{m}(x)$ of $m$ Lip\-schitz functions satisfying a suitable growth condition; cf.\ Remark~\ref{rk:ccondMultifactor}.

The next example shows that~\eqref{eq:ccond} also holds for the $p$-th power as cost.

\begin{example}\label{ex:pccond}
  For cost $c(x_{1},x_{2})=\|x_{1}-x_{2}\|^{p}$ with $p\in(1,\infty)$ on Euclidean space $\R^{d}\times\R^{d}$,  we have that \eqref{eq:ccond} holds with
  $$
    L := C_{p} \big[M_{p}(\mu_{1}) + M_{p}(\tilde\mu_{1}) + M_{p}(\mu_{2}) + M_{p}(\tilde\mu_{2})\big]^{p-1},
  $$
  where $M_{p}(\mu):=(\int \|x\|^{p}\,\mu(dx))^{1/p}$ for $\mu\in\cP(\R^{d})$ and $C_{p}$ is a constant depending only on~$p$.
\end{example}

 The proof, detailed in Section~\ref{se:proofs}, is similar to~\cite[Proposition~7.29]{Villani.09} and proceeds by estimating the derivative of a curve connecting the integrals in question. The example generalizes to costs $c(x_{1},x_{2})=\bar{c}(x_{1},x_{2})^{p}$ with~$\bar{c}$ being Lipschitz.
 
\subsection{Stability through Shadows}\label{se:stabilityThroughShadows}

We can now state our first result, establishing the continuity of~\eqref{eq:defS} with respect to the marginals. The qualitative part~(i) holds for general costs, the quantitative part~(ii) applies, in particular, to quadratic costs under 2-Wasserstein distance.

\begin{theorem}[Continuity of Value]\label{th:valueConv}
Let $p \in [1, \infty]$. 
\begin{itemize}
	\item[(i)] Let $\mu_i,\mu_i^n \in \mathcal{P}_p(X_i)$ satisfy $\lim_{n} W_p(\mu_i, \mu_i^n) = 0$ for $i=1, \dots, N$. 
	Then $S(\mu_1^n, \dots, \mu_N^n, c)\to S(\mu_1, \dots, \mu_N, c)$ and the associated optimal couplings converge in~$W_p$.
	
	\item[(ii)]
	Let $\mu_{i},\tilde{\mu}_i \in \mathcal{P}_p(X_i)$ for $i=1, \dots, N$ and let $c$ satisfy~\eqref{eq:ccond}. Then 
	\[
	\left|S(\mu_1, \dots, \mu_N, c) - S(\tilde\mu_1, \dots, \tilde\mu_N, c)\right| \leq L  W_{p}(\mu_1, \dots, \mu_N;\tilde\mu_1, \dots, \tilde\mu_N).
	\]
\end{itemize}
\end{theorem}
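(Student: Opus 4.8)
The plan is to prove both parts using the shadow construction (Definition~\ref{de:shadow}) together with Lemma~\ref{le:shadow}, starting with the quantitative estimate~(ii) since~(i) will essentially follow by combining it with a soft argument.

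For part~(ii), fix the two tuples of marginals and let $\pi^{*}\in\Pi(\mu_1,\dots,\mu_N)$ be the optimizer of $S(\mu_1,\dots,\mu_N,c)$. Let $\tilde\pi$ be a shadow of $\pi^{*}$, so $\tilde\pi\in\Pi(\tilde\mu_1,\dots,\tilde\mu_N)$. Then $\tilde\pi$ is a feasible competitor in the problem defining $S(\tilde\mu_1,\dots,\tilde\mu_N,c)$, hence
\[
S(\tilde\mu_1,\dots,\tilde\mu_N,c) \le \int c\,d\tilde\pi + D_f(\tilde\pi,\tilde\mu_1\otimes\dots\otimes\tilde\mu_N).
\]
By the divergence bound in Lemma~\ref{le:shadow}, $D_f(\tilde\pi,\tilde\mu_1\otimes\dots\otimes\tilde\mu_N)\le D_f(\pi^{*},\mu_1\otimes\dots\otimes\mu_N)$, and by~\eqref{eq:ccond} applied to the pair $(\pi^{*},\tilde\pi)$ together with the Wasserstein identity in Lemma~\ref{le:shadow}, $\int c\,d\tilde\pi \le \int c\,d\pi^{*} + L\,W_p(\pi^{*},\tilde\pi) = \int c\,d\pi^{*} + L\,W_p(\mu_1,\dots,\mu_N;\tilde\mu_1,\dots,\tilde\mu_N)$. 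Adding these and recognizing $\int c\,d\pi^{*}+D_f(\pi^{*},\mu_1\otimes\dots\otimes\mu_N)=S(\mu_1,\dots,\mu_N,c)$ gives
\[
S(\tilde\mu_1,\dots,\tilde\mu_N,c) - S(\mu_1,\dots,\mu_N,c) \le L\,W_p(\mu_1,\dots,\mu_N;\tilde\mu_1,\dots,\tilde\mu_N).
\]
The reverse inequality follows by symmetry — running the same argument with the roles of $\mu_i$ and $\tilde\mu_i$ interchanged, using a shadow of the optimizer for $\tilde\mu_1,\dots,\tilde\mu_N$ — since the constant $L$ in~\eqref{eq:ccond} is symmetric in the two tuples. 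Taking absolute values yields the claimed bound.

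For part~(i), the qualitative statement, I would first note that the growth condition~\eqref{eq:growthcond} on $c$ together with $W_p$-convergence of the marginals gives, via a standard shadow comparison, that $S(\mu_1^n,\dots,\mu_N^n,c)$ stays bounded and in fact both $\limsup$ and $\liminf$ can be pinned to $S(\mu_1,\dots,\mu_N,c)$: the $\limsup$ direction comes from plugging the shadow of the limiting optimizer $\pi^{*}$ as a competitor (its cost converges because $\int c\,d\tilde\pi^n\to\int c\,d\pi^{*}$ by uniform integrability from~\eqref{eq:growthcond} and $W_p\to 0$, and the divergence only goes down), while the $\liminf$ direction comes from taking optimizers $\pi^{*,n}$, forming their shadows back onto the limiting marginals, using lower semicontinuity of $D_f$ and continuity of $\pi\mapsto\int c\,d\pi$ along a convergent subsequence (tightness follows from $W_p$-boundedness). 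Convergence of the optimal couplings then follows: any subsequential $W_p$-limit $\pi^\infty$ of $\pi^{*,n}$ is feasible for the limit problem, and by lower semicontinuity achieves value $\le S(\mu_1,\dots,\mu_N,c)$, hence equals $\pi^{*}$ by uniqueness; since every subsequence has a further subsequence converging to $\pi^{*}$, the whole sequence converges. Here I should be a little careful that convergence is genuinely in $W_p$ and not merely weak — this uses that the $p$-th moments converge (a consequence of $W_p$-convergence of the marginals) to upgrade weak convergence plus moment convergence to $W_p$-convergence.

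The main obstacle is not in~(ii), which is a clean three-line estimate once Lemma~\ref{le:shadow} is in hand, but rather in making sure the hypotheses of that lemma and of~\eqref{eq:ccond} are exactly met — in particular that $\tilde\pi$ really is a legitimate coupling of the target marginals (this is built into Definition~\ref{de:shadow}) and that all integrals $\int c\,d\pi$ are finite, which is guaranteed by the growth condition~\eqref{eq:growthcond} combined with $\mu_i,\tilde\mu_i\in\mathcal{P}_p(X_i)$. For part~(i) the subtle point is the upgrade from weak to $W_p$ convergence of the optimizers; everything else is a routine compactness-and-lower-semicontinuity argument.
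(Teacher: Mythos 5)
Your proposal is correct and follows essentially the same route as the paper: part (ii) is exactly the paper's argument (shadow of the optimizer as competitor, divergence bound and Wasserstein identity from Lemma~\ref{le:shadow}, the estimate~\eqref{eq:ccond}, then symmetry), and part (i) matches the paper's scheme of using the shadow of $\pi^{*}$ for the $\limsup$, compactness plus joint lower semicontinuity along a subsequence for the $\liminf$, and uniqueness of the optimizer to upgrade subsequential to full convergence. Your remark about upgrading weak to $W_p$ convergence via uniform $p$-integrability inherited from the $W_p$-convergent marginals correctly fills in a point the paper leaves implicit.
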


This result will be proved by comparing the cost of a coupling with the cost of its shadow. Using the same idea, we can show the convergence of the cost functionals as follows.

\begin{remark}[$\Gamma$-Convergence]\label{rk:gammaconv}
	  Define $\mathcal{F}: \mathcal{P}_p(X) \rightarrow \mathbb{R}\cup\{\infty\}$ by
	\begin{align*}
	\cF(\pi)= \begin{cases}
	\int c \,d\pi + D_f(\pi, \mu_1 \otimes \dots \otimes \mu_N) &\text{if } \pi \in \Pi(\mu_1, \dots, \mu_N),\\
	\infty, &\text{otherwise}
	\end{cases}
	\end{align*}
	and similarly $\cF_{n}$ for the marginals $\mu_{i}^{n}$. If $\lim_{n} W_p(\mu_i, \mu_i^n) = 0$, then $\cF_{n}$ $\Gamma$-converges to $\cF$ ; that is, given $\pi\in \mathcal{P}_p(X)$,
	\begin{itemize}
		\item[(a)] $\mathcal{F}(\pi) \leq \liminf \mathcal{F}_n(\pi_n)$ for any $(\pi_n)_{n\geq1}\subset\mathcal{P}_p(X)$ with $W_p(\pi, \pi_n) \to 0$,
		
		\item[(b)] there exists a  sequence $(\pi_n)_{n\geq1}\subset\mathcal{P}_p(X)$ with $W_p(\pi, \pi_n) \to 0$ and $\mathcal{F}(\pi) \geq \limsup \mathcal{F}_n(\pi_n).$
	\end{itemize}
	 For the recovery sequence in~(b), we can choose $\pi_{n}\in\Pi(\mu_1^n, \dots, \mu_N^n)$ to be the shadow of~$\pi\in\Pi(\mu_1, \dots, \mu_N)$.
\end{remark}
		
\begin{remark}\label{rk:varyc}
  Theorem \ref{th:valueConv}\,(i) and Remark~\ref{rk:gammaconv} generalize to a sequence of cost functions $c_{n}$ converging to~$c$ as long as the convergence is strong enough to imply $\int c_{n}\,d\pi_{n}\to \int c\,d\pi$ whenever $\pi_{n}\in\Pi(\mu_1^n, \dots, \mu_N^n)$ converge in~$W_{p}$ to some $\pi\in\Pi(\mu_1, \dots, \mu_N)$.
\end{remark}

Our second aim is to bound the distance between the optimizers for different marginals. The line of argument requires controlling Wasserstein distance through entropy, hence it is natural to postulate a transport inequality. Given $q \in [1, \infty)$, we say that $\mu_i\in\cP_{q}(X_{i})$, $i=1,\dots,N$ satisfy~\eqref{eq:transportineq} with constant $C_{q}$ if 
	\begin{equation}\label{eq:transportineq}
	W_q(\pi, \theta) \leq C_q D_{\rm KL}(\theta, \pi)^{\frac{1}{2q}} \qforallq \pi,\theta \in \Pi(\mu_1, \dots, \mu_N). \tag{${\rm I}_{q}$}
	\end{equation}
Similarly, they satisfy~\eqref{eq:transportineq2} with constant $C_{q}^{'}$ if
	\begin{equation}\label{eq:transportineq2}
	\begin{split} 
		W_q(\pi, \theta) \leq C_{q}^{'} \left[ D_{\rm KL}(\theta, \pi)^{\frac{1}{q}} + \left(\frac{D_{\rm KL}(\theta, \pi)}{2}\right)^{\frac{1}{2q}} \right] 
	\end{split} \tag{${\rm I}_{q}^{'}$}
	\end{equation} 
	for all $\pi,\theta \in \Pi(\mu_1, \dots, \mu_N)$. The two inequalities serve a similar purpose, but~\eqref{eq:transportineq2} is implied by a weaker integrability condition. Indeed, when $X$ is bounded, \eqref{eq:transportineq} holds as a simple consequence of Pinsker's inequality. Using the weighted inequalities of \cite{BolleyVillani.05}, \eqref{eq:transportineq} and \eqref{eq:transportineq2} also hold under much weaker exponential moment conditions on~$\mu_{i}$ as detailed in~(ii) and~(iii) below. In~(i), we obtain a different relaxation where all but one space~$X_{i}$ are bounded. Thus for the standard case~$N=2$, if one marginal is bounded, no condition at all is needed on the other marginal.

\begin{lemma}\phantomsection\label{le:transportineq}  
   \begin{enumerate}
		\item Let $X':=X_{2}\times\cdots\times X_{N}$ and suppose that
		\[
		  \diam_q(X') := \sup_{x, y \in X'} d_{X', q}(x, y) < \infty. \\[-.3em]
		\]
		Then~\eqref{eq:transportineq} holds with $C_q = 2^{-\frac{1}{2q}} \diam_q(X')$ for all  $\mu_i\in\cP_{q}(X_{i})$. 

		\item If $\mu_i\in\cP(X_{i})$ satisfy
		$
		\int \exp(\alpha \, d_{X_i}(\hat{x}_i, x_i)^{2q}) \, \mu_i(dx_i)< \infty
		$
		for some $\alpha \in (0, \infty)$ and $\hat x_{i}\in X_{i}$, then~\eqref{eq:transportineq} holds with constant
		\[
		C_q = 2 \inf_{\hat{x} \in X, \alpha > 0} \left(\frac{N}{2\alpha} \sum_{i=1}^N \bigg(1 + \log \int \exp(\alpha d_{X_i}(\hat{x}_i, x_i)^{2q}) \,\mu_i(dx_i) \bigg) \right)^{\frac{1}{2q}}.
		\]
		
		\item If $\mu_i\in\cP(X_{i})$ satisfy
		$
		\int \exp(\alpha \, d_{X_i}(\hat{x}_i, x_i)^{q}) \, \mu_i(dx_i)< \infty
		$
		for some $\alpha \in (0, \infty)$ and $\hat x_{i}\in X_{i}$, then~\eqref{eq:transportineq2} holds with constant
		\[
		C_q^{'} = 2 \inf_{\hat{x} \in X, \alpha > 0} \left(\frac{1}{\alpha} \sum_{i=1}^N \bigg( \frac{3}{2} + \log \int \exp(\alpha d_{X_i}(\hat{x}_i, x_i)^q) \mu_i(dx_i) \bigg) \right)^{\frac{1}{q}}.
		\]
	\end{enumerate}
\end{lemma}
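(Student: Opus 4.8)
The plan is to handle part~(i) directly by disintegration along the shared marginal, and parts~(ii)--(iii) by invoking the weighted transport--entropy inequalities of~\cite{BolleyVillani.05} with the coupling~$\pi$ playing the role of the reference measure, after reducing its exponential moments to those of the marginals.

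For part~(i), I would fix $\pi,\theta\in\Pi(\mu_1,\dots,\mu_N)$, disintegrate along the first coordinate as $\pi=\mu_1\otimes\pi_{x_1}$ and $\theta=\mu_1\otimes\theta_{x_1}$ with $x_1\mapsto\pi_{x_1},\theta_{x_1}\in\cP(X')$ measurable, and for each $x_1$ pick (measurably) a maximal coupling $\gamma_{x_1}$ of $\pi_{x_1}$ and $\theta_{x_1}$ on $X'$. Gluing these over $x_1$ while leaving the first coordinate untouched --- which contributes nothing to $d_{X,q}$ --- yields a coupling of $\pi$ and $\theta$ showing
\[
  W_q(\pi,\theta)^q\le\int_{X_1}W_q(\pi_{x_1},\theta_{x_1})^q\,\mu_1(dx_1)\le\diam_q(X')^q\int_{X_1}\|\pi_{x_1}-\theta_{x_1}\|_{TV}\,\mu_1(dx_1),
\]
where the right-hand Wasserstein distances are taken on $(X',d_{X',q})$ and I use the elementary bound $W_q(\alpha,\beta)^q\le\diam_q(X')^q\|\alpha-\beta\|_{TV}$, valid on a space of diameter $\diam_q(X')$. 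Then Pinsker's inequality $\|\pi_{x_1}-\theta_{x_1}\|_{TV}\le(D_{\rm KL}(\theta_{x_1},\pi_{x_1})/2)^{1/2}$, the concavity of $t\mapsto t^{1/2}$ (Jensen), and the chain rule $D_{\rm KL}(\theta,\pi)=\int D_{\rm KL}(\theta_{x_1},\pi_{x_1})\,\mu_1(dx_1)$ (with no marginal term, since $\theta$ and $\pi$ share the first marginal) together give $W_q(\pi,\theta)^q\le 2^{-1/2}\diam_q(X')^q\,D_{\rm KL}(\theta,\pi)^{1/2}$, which is~\eqref{eq:transportineq} with $C_q=2^{-1/2q}\diam_q(X')$.

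For parts~(ii) and~(iii), I would apply the weighted transport--entropy inequalities of~\cite{BolleyVillani.05} on $(X,d_{X,q})$ with reference measure $\pi$ and $\mu=\theta$: under $\int e^{\alpha d_{X,q}(\hat x,x)^{q}}\pi(dx)<\infty$ these read
\[
  W_q(\theta,\pi)\le 2\Big(\tfrac1\alpha\big(\tfrac32+\log{\textstyle\int} e^{\alpha d_{X,q}(\hat x,x)^{q}}\pi(dx)\big)\Big)^{1/q}\Big[D_{\rm KL}(\theta,\pi)^{1/q}+\big(D_{\rm KL}(\theta,\pi)/2\big)^{1/2q}\Big],
\]
while the stronger moment $\int e^{\alpha d_{X,q}(\hat x,x)^{2q}}\pi(dx)<\infty$ yields the single-power bound $W_q(\theta,\pi)\le 2\big(\tfrac1{2\alpha}(1+\log\int e^{\alpha d_{X,q}(\hat x,x)^{2q}}\pi(dx))\big)^{1/2q}D_{\rm KL}(\theta,\pi)^{1/2q}$. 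Since $\pi$ is not a product in general, I would pass its exponential moment to the marginals: from $e^{\alpha d_{X,q}(\hat x,x)^{q}}=\prod_i e^{\alpha d_{X_i}(\hat x_i,x_i)^{q}}$ and H\"older's inequality with $N$ equal exponents,
\[
  \log{\textstyle\int} e^{\alpha d_{X,q}(\hat x,x)^{q}}\pi(dx)\le\tfrac1N\sum_{i=1}^N\log{\textstyle\int} e^{N\alpha d_{X_i}(\hat x_i,x_i)^{q}}\mu_i(dx_i),
\]
and, using $d_{X,q}(\hat x,x)^{2q}\le N\sum_i d_{X_i}(\hat x_i,x_i)^{2q}$ (power--mean) followed by H\"older, $\log\int e^{\alpha d_{X,q}(\hat x,x)^{2q}}\pi(dx)\le\frac1N\sum_i\log\int e^{N^2\alpha d_{X_i}(\hat x_i,x_i)^{2q}}\mu_i(dx_i)$. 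Substituting these bounds and reparametrizing ($\beta:=N\alpha$ in the first case, $\beta:=N^2\alpha$ in the second) turns the prefactors into $\big(\tfrac1\beta\sum_i(\tfrac32+\log\int e^{\beta d_{X_i}(\hat x_i,x_i)^{q}}\mu_i)\big)^{1/q}$ and $\big(\tfrac{N}{2\beta}\sum_i(1+\log\int e^{\beta d_{X_i}(\hat x_i,x_i)^{2q}}\mu_i)\big)^{1/2q}$; taking the infimum over $\beta>0$ and $\hat x\in X$ reproduces the constants $C_q^{'}$ of~\eqref{eq:transportineq2} and $C_q$ of~\eqref{eq:transportineq}. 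As these bounds hold for every $\theta,\pi\in\Pi(\mu_1,\dots,\mu_N)$, both parts follow.

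The step I expect to be the main obstacle is this last bit of bookkeeping: the transport inequality is naturally stated with a single probability measure as reference, so one must install the (non-product) coupling $\pi$ there and then reduce its exponential moment to the marginal ones via H\"older and the product metric, keeping track of constants through the reparametrization so that the final expressions match $C_q$ and $C_q^{'}$ exactly. Part~(i) is routine once the disintegration is in place; the only points requiring a little care are the measurable selection $x_1\mapsto\gamma_{x_1}$ of maximal couplings on $X'$ and the use of the entropy chain rule under a common marginal.
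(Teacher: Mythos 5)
Your proposal is correct, and the overall architecture matches the paper's: parts (ii) and (iii) are proved exactly as in the paper, namely by invoking the weighted Csisz\'ar--Kullback--Pinsker inequalities of \cite{BolleyVillani.05} with the coupling $\pi$ as reference measure, then transferring the exponential moment of $\pi$ to the marginals $\mu_i$ (your H\"older-with-$N$-equal-exponents step is the same as the paper's convexity of $f\mapsto\log\int e^{\tilde\alpha f}\,d\pi$, using that $e^{\alpha d_{X_i}(\hat x_i,x_i)^q}$ depends only on $x_i$), and finally reparametrizing $\beta=N\alpha$ resp.\ $\beta=N^2\alpha$; your bookkeeping reproduces the stated $C_q$ and $C_q'$ exactly, since $\frac{N^2}{2\beta}+\frac{N}{2\beta}\sum_i\log(\cdot)=\frac{N}{2\beta}\sum_i(1+\log(\cdot))$. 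For part (i) you take a mildly different route to the same constant: you disintegrate $\pi=\mu_1\otimes\pi_{x_1}$, $\theta=\mu_1\otimes\theta_{x_1}$, apply Pinsker \emph{fiberwise}, and recombine via Jensen and the entropy chain rule (the $X_1$-marginal term vanishing because both marginals equal $\mu_1$), whereas the paper argues globally: it restricts to couplings in $\Pi_1(\pi,\theta)$ that never move mass in the $X_1$-direction, shows via the decomposition through $\pi\wedge\theta$ that such a coupling realizes $\int\1_{x\neq y}\,d\kappa\le\|\pi-\theta\|_{TV}$, and then applies Pinsker once, globally. The trade-off is that your version needs a measurable selection of fiberwise maximal (or $W_q$-optimal) couplings, a conditional form of Pinsker, and the chain rule $D_{\rm KL}(\theta,\pi)=\int D_{\rm KL}(\theta_{x_1},\pi_{x_1})\,\mu_1(dx_1)$ --- all standard, and you flag the selection issue correctly --- while the paper's version trades these for the explicit construction of a coupling of the mutually singular parts; both yield $C_q=2^{-\frac{1}{2q}}\diam_q(X')$, so either write-up would serve.
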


Noting the logarithm in the formulas for~$C_{q}$ and $C_{q}^{'}$, we observe that these constants are typically much smaller than the exponential moment itself. We also note that the condition in~(iii) covers subgaussian marginals for~$q=2$.

We can now state a quantitative result for the stability of the optimizer of~\eqref{eq:defSent} relative to the marginals. In view of the above, the assumptions cover quadratic cost under 2-Wasserstein distance and subgaussian marginals.

\begin{theorem}[Stability of Optimizers]\label{th:opti}
	Let $p \in [1, \infty]$ and $q \in [1, \infty)$ with $q \leq p$, let $\mu_i,\tilde{\mu}_{i}\in \mathcal{P}_p(X_i)$, let $\mu_1,\dots,\mu_{N}$ satisfy~\eqref{eq:transportineq} with constant~$C_{q}$, and let $c$ satisfy~\eqref{eq:ccond}. Then the optimizers $\pi^*, \tilde{\pi}^*$ of $S_{\textrm{ent}}(\mu_1, \dots, \mu_N, c)$ and $S_{\textrm{ent}}(\tilde\mu_1, \dots, \tilde\mu_N, c)$ satisfy
	\[
	W_q(\pi^*, \tilde{\pi}^*) \leq N^{(\frac{1}{q} - \frac{1}{p})} \, \Delta +C_q \left(2L\, \Delta\right)^{\frac{1}{2q}}, \quad \Delta := W_{p}(\mu_1, \dots, \mu_N;\tilde\mu_1, \dots, \tilde\mu_N).
	\]
	If $\mu_1, \dots, \mu_N$ satisfy \eqref{eq:transportineq2} with constant $C_{q}^{'}$ instead of \eqref{eq:transportineq}, then
	\[
	W_q(\pi^*, \tilde{\pi}^*) \leq N^{(\frac{1}{q} - \frac{1}{p})} \, \Delta +C_q^{'} \left[ \left(2L\Delta\right)^{\frac{1}{q}} + \left(L\, \Delta\right)^{\frac{1}{2q}}\right].
	\]
	In particular, $(\mu_1, \dots, \mu_N)\mapsto \pi^*$ is $\frac{1}{2p}$-H\"{o}lder continuous in $W_p$ when restricted to a bounded set of marginals satisfying~\eqref{eq:ccond} and~$({\rm I}_{p})$ or~$({\rm T}'_{p})$ with given constants.
\end{theorem}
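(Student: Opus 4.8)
The plan is to use a Pythagorean-type identity for relative entropy to translate the value stability from Theorem~\ref{th:valueConv}(ii) into a bound on $D_{\rm KL}(\pi,\pi^*)$ for a suitably chosen $\pi$, and then to convert that bound into a $W_q$-estimate by combining the transport inequality with the triangle inequality. Write $\Phi(\theta):=\int c\,d\theta + D_{\rm KL}(\theta,\mu_1\otimes\cdots\otimes\mu_N)$ for $\theta\in\Pi(\mu_1,\dots,\mu_N)$, so that $\pi^*$ minimizes $\Phi$ and $\Phi(\pi^*)=S_{\rm ent}(\mu_1,\dots,\mu_N,c)$. The identity I would establish first is
\[
  \Phi(\theta)\ =\ \Phi(\pi^*)+D_{\rm KL}(\theta,\pi^*)\qquad\text{for every }\theta\in\Pi(\mu_1,\dots,\mu_N),
\]
which follows from the known form $d\pi^*/d(\mu_1\otimes\cdots\otimes\mu_N)=\exp(f_1\oplus\cdots\oplus f_N-c)$ of the entropic optimizer with Schr\"odinger potentials $f_i\in L^1(\mu_i)$: expanding $D_{\rm KL}(\theta,\pi^*)$ leaves $\Phi(\theta)-\sum_i\int f_i\,d\mu_i$, since the cross term $\int(f_1\oplus\cdots\oplus f_N)\,d\theta$ depends on $\theta$ only through its marginals, and evaluating at $\theta=\pi^*$ identifies $\sum_i\int f_i\,d\mu_i=\Phi(\pi^*)$. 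Only the inequality $D_{\rm KL}(\theta,\pi^*)\le\Phi(\theta)-\Phi(\pi^*)$ is strictly needed, and it can alternatively be obtained without potential regularity by differentiating the convex function $t\mapsto D_{\rm KL}((1-t)\pi^*+t\theta,\rho)$ at its minimizer $t=0$, where $\rho$ is the probability obtained by normalizing $e^{-c}\,\mu_1\otimes\cdots\otimes\mu_N$.

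Next I would bring in the shadow. Let $\tilde\pi^*$ be the optimizer of $S_{\rm ent}(\tilde\mu_1,\dots,\tilde\mu_N,c)$ and let $\pi\in\Pi(\mu_1,\dots,\mu_N)$ be its shadow in the sense of Definition~\ref{de:shadow} (with the roles of the two tuples of marginals interchanged). Lemma~\ref{le:shadow} gives $W_p(\pi,\tilde\pi^*)=\Delta$ and $D_{\rm KL}(\pi,\mu_1\otimes\cdots\otimes\mu_N)\le D_{\rm KL}(\tilde\pi^*,\tilde\mu_1\otimes\cdots\otimes\tilde\mu_N)$. Since $\pi\in\Pi(\mu_1,\dots,\mu_N)$ and $\tilde\pi^*\in\Pi(\tilde\mu_1,\dots,\tilde\mu_N)$, condition~\eqref{eq:ccond} yields $\int c\,d\pi\le\int c\,d\tilde\pi^*+L\,W_p(\pi,\tilde\pi^*)=\int c\,d\tilde\pi^*+L\Delta$, and combining this with the entropy estimate gives
\[
  \Phi(\pi)\ \le\ \int c\,d\tilde\pi^*+D_{\rm KL}(\tilde\pi^*,\tilde\mu_1\otimes\cdots\otimes\tilde\mu_N)+L\Delta\ =\ S_{\rm ent}(\tilde\mu_1,\dots,\tilde\mu_N,c)+L\Delta.
\]
Plugging this into the Pythagorean identity and using $\Phi(\pi^*)=S_{\rm ent}(\mu_1,\dots,\mu_N,c)$ together with the value bound $S_{\rm ent}(\tilde\mu_1,\dots,\tilde\mu_N,c)-S_{\rm ent}(\mu_1,\dots,\mu_N,c)\le L\Delta$ from Theorem~\ref{th:valueConv}(ii) yields the key estimate $D_{\rm KL}(\pi,\pi^*)\le 2L\Delta$.

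To finish, I would write $W_q(\pi^*,\tilde\pi^*)\le W_q(\pi^*,\pi)+W_q(\pi,\tilde\pi^*)$. Because $\pi,\pi^*\in\Pi(\mu_1,\dots,\mu_N)$, the transport inequality~\eqref{eq:transportineq} applies to this pair, and by monotonicity together with $D_{\rm KL}(\pi,\pi^*)\le 2L\Delta$ it gives $W_q(\pi^*,\pi)\le C_q(2L\Delta)^{1/(2q)}$; under~\eqref{eq:transportineq2} instead it gives $W_q(\pi^*,\pi)\le C_q'[(2L\Delta)^{1/q}+(L\Delta)^{1/(2q)}]$. For the other term, the elementary bound $d_{X,q}\le N^{1/q-1/p}\,d_{X,p}$ between the product metrics (valid for $q\le p$, also when $p=\infty$) and $W_q\le W_p$ for a fixed metric give $W_q(\pi,\tilde\pi^*)\le N^{1/q-1/p}W_p(\pi,\tilde\pi^*)=N^{1/q-1/p}\Delta$; adding the two pieces produces precisely the two displayed inequalities. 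The H\"older statement then follows by taking $q=p<\infty$, so that $N^{1/q-1/p}=1$: on a bounded set of marginals with fixed $L$ and $C_p$ (or $C_p'$) one has $\Delta\le\Delta_0<\infty$, and writing $\Delta=\Delta^{1-1/(2p)}\Delta^{1/(2p)}$ and $\Delta^{1/p}=\Delta^{1/(2p)}\Delta^{1/(2p)}$ and bounding the excess powers of $\Delta$ by the corresponding powers of $\Delta_0$ turns the estimate into $W_p(\pi^*,\tilde\pi^*)\le K\,\Delta^{1/(2p)}$.

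The main obstacle I anticipate is the Pythagorean step in the present unbounded generality: one must ensure that the Schr\"odinger potentials exist and are $\mu_i$-integrable (equivalently, that the relevant $\log$-density cross terms are integrable against the couplings $\pi,\pi^*$), which is where the growth condition on $c$ and the moment assumptions on the marginals enter; all the remaining steps --- the shadow estimates, the triangle inequality, and the norm comparisons --- are routine given Lemma~\ref{le:shadow} and the hypothesis that~\eqref{eq:transportineq} (respectively~\eqref{eq:transportineq2}) holds.
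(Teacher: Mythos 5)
Your argument is correct and is essentially the paper's proof: shadow one optimizer onto the other set of marginals, combine \eqref{eq:ccond}, Lemma~\ref{le:shadow} and the value bound of Theorem~\ref{th:valueConv}\,(ii) to get an excess-cost bound of $2L\Delta$, convert it to $D_{\rm KL}\leq 2L\Delta$ via the Pythagorean entropy inequality, and finish with the transport inequality, the comparison $W_q\leq N^{\frac{1}{q}-\frac{1}{p}}W_p$ and the triangle inequality. The only cosmetic differences are that you shadow $\tilde{\pi}^*$ onto the $\mu$-marginals while the paper shadows $\pi^*$ onto the $\tilde{\mu}$-marginals (a symmetric choice), and that the integrability of the Schr\"odinger potentials you worry about is not needed: the paper rewrites $\cF(\pi)=D_{\rm KL}(\pi,P_c)-\log\alpha$ and invokes Csisz\'ar's Pythagorean theorem for the $I$-projection, which requires no regularity of the potentials.
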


This result will be derived by comparing the optimizer with its shadow and applying a strong convexity argument, more specifically, a Pythagorean relation for relative entropy. 
In Theorem~\ref{th:opti}, only one set of marginals needs to satisfy~\eqref{eq:transportineq} or~\eqref{eq:transportineq2}. If the assumption holds for both $(\mu_i)$ and $(\tilde\mu_i)$, the proof shows that~$L$ can be replaced by~$L/2$ in the assertion.

\subsection{Stability through Transformation}\label{se:stabilityThroughTransform}

Next, we improve the H\"{o}lder exponent of Theorem~\ref{th:opti} for the case of bounded cost. The general line of argument is to reduce a difference in marginals to a difference in cost functions. Thus, we first state a stability result for the cost function under fixed marginals; it may be of independent interest.

\begin{proposition}[Stability wrt.\ Cost]\label{pr:costStability}
	Let $p \in [1, \infty]$, let $\mu_i\in \mathcal{P}_p(X_i)$, $i=1, \dots, N$ and $P = \mu_1 \otimes \dots \otimes \mu_N$. Let $c, \tilde{c}: X \rightarrow \mathbb{R}_{+}$ be bounded measurable, then the optimizers $\pi^*, \tilde{\pi}^*$ of $S_{\textrm{ent}}(\mu_1, \dots, \mu_N, c)$ and $S_{\textrm{ent}}(\mu_1, \dots, \mu_N, \tilde{c})$ satisfy
\begin{align*}
		\left\|\pi^* - \tilde{\pi}^*\right\|_{TV} 
		&\leq \frac{1}{2} a^{\frac{1}{p+1}} \, \|c-\tilde{c}\|_{L^p(P)}^{\frac{p}{p+1}}, \\
	  D_{\rm KL}(\pi^*, \tilde{\pi}^*) + D_{\rm KL}(\tilde{\pi}^*, \pi^*) 
	  &\leq a^{\frac{2}{p+1}} \|c-\tilde{c}\|_{L^p(P)}^{\frac{2p}{p+1}},
  	\end{align*}
		where $a :=  \exp(N\|c\|_\infty) +\exp(N\|\tilde{c}\|_\infty)$.
		Let $q\in [1,\infty)$. If $\mu_1,\dots,\mu_{N}$ satisfy~\eqref{eq:transportineq} with constant $C_q$, then also
		\[
		W_q(\pi^*, \tilde{\pi}^*) \leq 2^{-\frac{1}{2q}} C_q\left(a^\frac{1}{p} \|c-\tilde{c}\|_{L^p(P)}\right)^{\frac{p}{(p+1)q}}, 
		\]
		whereas if $\mu_1, \dots, \mu_N$ satisfy~\eqref{eq:transportineq2} with constant $C_q^{'}$, then
		\[
		W_q(\pi^*, \tilde{\pi}^*) \leq C_q^{'}\left[ \left(a^\frac{1}{p} \|c-\tilde{c}\|_{L^p(P)}\right)^{\frac{2p}{(p+1)q}} + 2^{-\frac{1}{2q}}\left(a^\frac{1}{p} \|c-\tilde{c}\|_{L^p(P)}\right)^{\frac{p}{(p+1)q}} \right].
		\]
\end{proposition}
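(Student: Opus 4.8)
The plan is to exploit the fact that, with fixed marginals and bounded costs, the two optimizers $\pi^{*}$ and $\tilde\pi^{*}$ are mutually absolutely continuous with densities bounded away from $0$ and $\infty$ relative to the reference product $P$, so that divergences between them can be controlled. The starting point is the first-order/variational characterization of the optimizer: since $\pi \mapsto \int c\,d\pi + D_{\rm KL}(\pi,P)$ is strongly convex along the mixture $(1-t)\pi^{*}+t\rho$ for any competitor $\rho\in\Pi(\mu_{1},\dots,\mu_{N})$, and $\pi^{*}$ is the minimizer, one obtains a one-sided inequality; applying it both at $(\pi^{*},c)$ with competitor $\tilde\pi^{*}$ and at $(\tilde\pi^{*},\tilde c)$ with competitor $\pi^{*}$, and adding, the linear terms combine into $\int (c-\tilde c)\,d(\tilde\pi^{*}-\pi^{*})$ and the entropic terms into $D_{\rm KL}(\pi^{*},\tilde\pi^{*})+D_{\rm KL}(\tilde\pi^{*},\pi^{*})$ (using the Pythagorean identity $D_{\rm KL}(\rho,P)-D_{\rm KL}(\pi^{*},P)=D_{\rm KL}(\rho,\pi^{*})+\int \log(d\pi^{*}/dP)\,d(\rho-\pi^{*})$ together with the explicit Gibbs form $d\pi^{*}/dP \propto e^{-c}$ up to Schr\"odinger potentials — the potentials integrate to zero against $\rho-\pi^{*}$ since both have the same marginals). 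This yields the clean estimate
\[
  D_{\rm KL}(\pi^{*},\tilde\pi^{*}) + D_{\rm KL}(\tilde\pi^{*},\pi^{*}) \leq \int (c-\tilde c)\,d(\tilde\pi^{*}-\pi^{*}).
\]

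Next I would bound the right-hand side. By H\"older's inequality the integral is at most $\|c-\tilde c\|_{L^{p}(P)}$ times an $L^{p'}(P)$-type norm of the density difference $d(\tilde\pi^{*}-\pi^{*})/dP$; but more efficiently, one interpolates: $\int(c-\tilde c)\,d(\tilde\pi^{*}-\pi^{*}) \le \|c-\tilde c\|_{L^{p}(P)}\,\|h\|_{L^{p/(p-1)}(P)}$ where $h:=d\tilde\pi^{*}/dP - d\pi^{*}/dP$, and then control $\|h\|_{L^{p/(p-1)}(P)}$ by combining the crude sup-bound $\|h\|_{\infty}\le$ (something like) the bound on the densities with the $L^{1}$ bound $\|h\|_{L^{1}(P)} = 2\|\pi^{*}-\tilde\pi^{*}\|_{TV}$, via $\|h\|_{L^{r}}\le \|h\|_{\infty}^{1-1/r}\|h\|_{L^{1}}^{1/r}$. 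The density bound comes from the Gibbs form: with bounded cost, $d\pi^{*}/dP = e^{\varphi\oplus\cdots - c}$ and a standard argument (normalizing potentials, using that marginals are probability measures) gives $e^{-N\|c\|_{\infty}} \le d\pi^{*}/dP \le e^{N\|c\|_{\infty}}$, whence $\|h\|_{\infty}\le a := e^{N\|c\|_{\infty}}+e^{N\|\tilde c\|_{\infty}}$. Meanwhile Pinsker's inequality gives $2\|\pi^{*}-\tilde\pi^{*}\|_{TV}^{2}\le D_{\rm KL}(\pi^{*},\tilde\pi^{*})+D_{\rm KL}(\tilde\pi^{*},\pi^{*})=:E$. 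Plugging everything in, $E \le \|c-\tilde c\|_{L^{p}(P)}\, a^{1-1/p'}\,(2\|\pi^{*}-\tilde\pi^{*}\|_{TV})^{1/p'}$ with $1/p' = 1-1/p$, and then using Pinsker again to write $\|\pi^{*}-\tilde\pi^{*}\|_{TV}\le (E/2)^{1/2}$, one gets a self-bounding inequality $E \le \|c-\tilde c\|_{L^{p}(P)} a^{1/p} E^{(p-1)/(2p)}$ (after bookkeeping the constants), which solves to $E \le a^{2/(p+1)}\|c-\tilde c\|_{L^{p}(P)}^{2p/(p+1)}$, the second displayed bound. The TV bound then follows from $\|\pi^{*}-\tilde\pi^{*}\|_{TV}\le (E/2)^{1/2}$.

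Finally, the Wasserstein bounds are immediate: apply the transport inequality~\eqref{eq:transportineq} (resp.~\eqref{eq:transportineq2}) with $\pi = \tilde\pi^{*}$, $\theta = \pi^{*}$ — both lie in $\Pi(\mu_{1},\dots,\mu_{N})$ since the marginals are fixed, which is exactly why this approach needs the common-marginal setting — obtaining $W_{q}(\pi^{*},\tilde\pi^{*}) \le C_{q}\,D_{\rm KL}(\pi^{*},\tilde\pi^{*})^{1/(2q)}$, and then substitute the second displayed bound (noting $D_{\rm KL}(\pi^{*},\tilde\pi^{*})\le E$). Matching the stated constant requires tracking the factor $2^{-1/2q}$, which comes from writing $E \ge 2 D_{\rm KL}$ only loosely — more carefully one uses $D_{\rm KL}(\pi^{*},\tilde\pi^{*})\le E \le (\text{bound})$ and the constant $2^{-1/2q}$ is already built into~\eqref{eq:transportineq} via $C_{q}$ in the bounded case; here it appears because the clean inequality reads $W_{q}^{2q}\le 2^{-1}\,\diam^{2q}\,D_{\rm KL}$ before taking roots. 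The main obstacle is the first paragraph: getting the sharp ``sum of cross-divergences'' inequality with the linear term $\int(c-\tilde c)\,d(\tilde\pi^{*}-\pi^{*})$ and \emph{no} leftover terms, which hinges on the potentials cancelling against $\tilde\pi^{*}-\pi^{*}$ (same marginals) and on the Pythagorean identity for relative entropy — and then, secondarily, bookkeeping the constants so that $a$, the powers $p/(p+1)$ and $2p/(p+1)$, and the factors of $2$ come out exactly as stated.
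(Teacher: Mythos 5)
Your proposal follows essentially the same route as the paper's proof: the cross-divergence inequality $D_{\rm KL}(\pi^*,\tilde\pi^*)+D_{\rm KL}(\tilde\pi^*,\pi^*)\le\int(c-\tilde c)\,d(\tilde\pi^*-\pi^*)$ via the entropic Pythagorean relation, then H\"older combined with the $L^\infty$--$L^1$ interpolation of the density difference and the duality bound $\|d\pi^*/dP\|_\infty\le e^{N\|c\|_\infty}$, a Pinsker-based self-bounding step, and finally the transport inequalities. Two constant-level corrections are needed to land exactly on the stated bounds: use Pinsker summed over both directions, $4\|\pi^*-\tilde\pi^*\|_{TV}^2\le D_{\rm KL}(\pi^*,\tilde\pi^*)+D_{\rm KL}(\tilde\pi^*,\pi^*)$, rather than your one-sided $\|\pi^*-\tilde\pi^*\|_{TV}\le (E/2)^{1/2}$ (which loses a factor $\sqrt2$ in the TV bound and $2^{(p-1)/(p+1)}$ in the KL bound), and in the $({\rm I}_q)$ case the factor $2^{-\frac{1}{2q}}$ is not ``built into $C_q$'' but comes from applying $({\rm I}_q)$ to both $D_{\rm KL}(\pi^*,\tilde\pi^*)$ and $D_{\rm KL}(\tilde\pi^*,\pi^*)$ and summing before taking the $2q$-th root.
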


(For $p=\infty$, the exponent $\frac{p}{(p+1)q}$ should be read as~$\frac{1}{q}$.) Proposition~\ref{pr:costStability} will be derived by comparing the optimizers in the sense of relative entropy $D_{\rm KL}(\pi^*, \tilde{\pi}^*)$. Of course, this is not possible in the other results where the marginals differ in a possibly singular way. We observe that the constant~$a$ deteriorates exponentially in $\|c\|_{\infty}$, however due to the $a^\frac{1}{p}$ in the formula this can be counteracted by using a stronger $L^{p}$ norm. In particular, for $p=\infty$, the direct dependence on $\|c\|_{\infty},\|\tilde c\|_{\infty}$ disappears completely, and moreover we obtain a Lipschitz estimate from $L^{\infty}$ to $W_{1}$.

Those features are inherited by our final result on the stability with respect to marginals; it improves the H\"{o}lder exponent of Theorem~\ref{th:opti} in the case of bounded  costs. As above, the dependence of the constant on~$\|c\|_{\infty}$ is avoided for $p=\infty$; we now obtain a Lipschitz result from $W_{\infty}$ into $W_{1}$.

\begin{theorem}[Stability of Optimizers for Bounded Cost]
	\label{th:second}
	Let $p \in [1, \infty]$ and $q \in [1, \infty)$ with $q \leq p$, let $\mu_i,\tilde{\mu}_{i}\in \mathcal{P}_p(X_i)$ satisfy~\eqref{eq:transportineq} with constant $C_{q}$ and let $c$ be bounded Lipschitz. Then the optimizers $\pi^*, \tilde{\pi}^*$ of $S_{\textrm{ent}}(\mu_1, \dots, \mu_N, c)$ and $S_{\textrm{ent}}(\tilde\mu_1, \dots, \tilde\mu_N, c)$ satisfy
		\[
		W_q(\pi^*, \tilde{\pi}^*) \leq N^{(\frac{1}{q} - \frac{1}{p})}\,\Delta +  2^{-\frac{1}{2q}}C_q  \left(a^{\frac{1}{p}} \,\Lip_p(c)\, \Delta\right)^{\frac{p}{(p+1)q}}
		\]
		where $a := 2 \exp(N\|c\|_\infty)$ and $\Delta := W_{p}(\mu_1, \dots, \mu_N;\tilde\mu_1, \dots, \tilde\mu_N)$. If $\mu_i, \tilde{\mu}_i$ satisfy \eqref{eq:transportineq2} with constant $C_q^{'}$ instead of \eqref{eq:transportineq}, then
		\begin{align*}
		W_q(\pi^*, \tilde{\pi}^*) &\leq N^{(\frac{1}{q} - \frac{1}{p})}\,\Delta \\&+  2^{-\frac{1}{q}} C_q^{'}\left[ \left(a^\frac{1}{p} \Lip_p(c)\, \Delta\right)^{\frac{2p}{(p+1)q}} + 2^{-\frac{1}{2q}}\left(a^\frac{1}{p} \Lip_p(c)\, \Delta\right)^{\frac{p}{(p+1)q}} \right].
		\end{align*}
		In particular, $(\mu_1, \dots, \mu_N)\mapsto \pi^*$ is $\frac{1}{p+1}$-H\"{o}lder continuous in $W_p$ when restricted to a bounded set of marginals satisfying~$({\rm I}_{p})$ or~$({\rm T}'_{p})$ %
	    with a given constant. For $q=1$ and $p=\infty$, we have the Lipschitz estimate 
    \[
		W_1(\pi^*, \tilde{\pi}^*) \leq \ell\, W_{\infty}(\mu_1, \dots, \mu_N;\tilde\mu_1, \dots, \tilde\mu_N)
		\]
		with constant $\ell:= N +  (C_{1}/\sqrt2) \Lip_{\infty}(c)$ independent of $\|c\|_{\infty}$. The constant~$\ell$ is sharp.
\end{theorem}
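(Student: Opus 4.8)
The plan is to execute the ``reduction to differing cost functions'' announced in the introduction, so that Theorem~\ref{th:second} reduces to Proposition~\ref{pr:costStability} via a change of coordinates and a triangle inequality. Consider first the model case in which, for each $i$, there is a bimeasurable bijection $T_i\colon X_i\to X_i$ with $(T_i)_\#\mu_i=\tilde\mu_i$ that attains $W_p(\mu_i,\tilde\mu_i)$ (for $p=\infty$: $d_{X_i}(x_i,T_i(x_i))\le W_\infty(\mu_i,\tilde\mu_i)$ for $\mu_i$-a.e.\ $x_i$). Put $T:=(T_1,\dots,T_N)$, $P:=\mu_1\otimes\cdots\otimes\mu_N$ and $\tilde P:=\tilde\mu_1\otimes\cdots\otimes\tilde\mu_N$. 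Then $T_\#P=\tilde P$, and since $T$ is a bimeasurable bijection, $\pi\mapsto T_\#\pi$ maps $\Pi(\mu_1,\dots,\mu_N)$ bijectively onto $\Pi(\tilde\mu_1,\dots,\tilde\mu_N)$, preserves relative entropy against the respective product measures, and satisfies $\int c\circ T\,d\pi=\int c\,d(T_\#\pi)$. Hence $S_{\rm ent}(\mu_1,\dots,\mu_N,c\circ T)=S_{\rm ent}(\tilde\mu_1,\dots,\tilde\mu_N,c)$, with optimizers related by $T_\#\hat\pi^{*}=\tilde\pi^{*}$, where $\hat\pi^{*}$ denotes the optimizer of $S_{\rm ent}(\mu_1,\dots,\mu_N,c\circ T)$.

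Next, split $W_q(\pi^{*},\tilde\pi^{*})\le W_q(\pi^{*},\hat\pi^{*})+W_q(\hat\pi^{*},\tilde\pi^{*})$. For the first term, $\pi^{*}$ and $\hat\pi^{*}$ are the entropic optimizers for the \emph{same} marginals $(\mu_i)$ and the nonnegative bounded costs $c$ and $c\circ T$; since $\|c\circ T\|_\infty\le\|c\|_\infty$ and, by Lipschitzianity of $c$, $\|c-c\circ T\|_{L^p(P)}\le\Lip_p(c)\,(\int d_{X,p}(x,T(x))^p\,P(dx))^{1/p}=\Lip_p(c)\,\Delta$ (with the obvious reading for $p=\infty$), Proposition~\ref{pr:costStability} bounds $W_q(\pi^{*},\hat\pi^{*})$ by $2^{-1/(2q)}C_q(a^{1/p}\Lip_p(c)\,\Delta)^{p/((p+1)q)}$ with $a=2\exp(N\|c\|_\infty)$ under $({\rm I}_q)$, and by the corresponding $C_q'$-expression under $({\rm I}_q')$. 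For the second term, $(\mathrm{id},T)_\#\hat\pi^{*}$ couples $\hat\pi^{*}$ with $\tilde\pi^{*}$, so $W_q(\hat\pi^{*},\tilde\pi^{*})^q\le\int d_{X,q}(x,T(x))^q\,\hat\pi^{*}(dx)=\sum_{i=1}^N\int d_{X_i}(x_i,T_i(x_i))^q\,\mu_i(dx_i)$, using that $\hat\pi^{*}$ has $i$-th marginal $\mu_i$; as $q\le p$ and each $T_i$ is $W_p$-optimal, Jensen's inequality and then the power-mean inequality bound this by $\sum_i W_p(\mu_i,\tilde\mu_i)^q\le N^{1-q/p}\Delta^q$, i.e.\ $W_q(\hat\pi^{*},\tilde\pi^{*})\le N^{1/q-1/p}\Delta$. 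Adding the two estimates yields the asserted inequalities (for the $({\rm I}_q')$ bound a slightly sharper use of the transport inequality is needed to obtain the stated constant); the Lipschitz case is the specialization $p=\infty$, $q=1$, where $a^{1/p}=1$ and the cost-stability exponent equals $1$, giving $W_1(\pi^{*},\tilde\pi^{*})\le(N+2^{-1/2}C_1\Lip_\infty(c))\,\Delta=\ell\,\Delta$.

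The main obstacle is that such invertible optimal maps $T_i$ need not exist for general marginals --- e.g.\ if $\mu_i$ has atoms there may be no transport map from $\mu_i$ to $\tilde\mu_i$ at all, and without invertibility the change of coordinates only yields one inequality (by the data-processing inequality) rather than the required identification of the two problems. I would remove this restriction by an enlargement argument: replace $X_i$ by $X_i\times[0,1]$ with metric $d_{X_i}(x,y)+\delta|s-t|$ and $\mu_i,\tilde\mu_i$ by $\mu_i\otimes\mathrm{Leb}$, $\tilde\mu_i\otimes\mathrm{Leb}$. Tensoring with Lebesgue leaves $S_{\rm ent}$ and its optimizer unchanged (a standard entropy computation), the transport-inequality constant and $\Delta$ are affected only by $O(\delta)$, and on the enlarged, now atomless, spaces a bimeasurable bijection $T_i$ realizing $W_p(\mu_i,\tilde\mu_i)$ up to $O(\delta)$ can be assembled from a disintegration $\kappa_i=\mu_i\otimes K_i$ of a $W_p$-optimal coupling together with a measurable family of $[0,1]$-parametrizations of the kernels $K_i(x_i)$; running the model-case argument there and letting $\delta\to0$, with convergence of the optimizers supplied by Theorem~\ref{th:valueConv}\,(i), gives the general statement. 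Finally, for the sharpness of $\ell$ I would exhibit an explicit discrete example --- two-point marginal spaces, with one marginal perturbed by a parameter $\varepsilon$ --- for which $\pi^{*}$ and $\tilde\pi^{*}$ are computable in closed form and for which the ``geometric'' contribution $N\varepsilon$ and the ``cost-perturbation'' contribution $(C_1/\sqrt2)\Lip_\infty(c)\,\varepsilon$ are both present and additive, so that $W_1(\pi^{*},\tilde\pi^{*})/W_\infty(\mu_1,\mu_2;\tilde\mu_1,\tilde\mu_2)\to\ell$ as $\varepsilon\to0$; this last computation, rather than the stability estimate, is the delicate point there.
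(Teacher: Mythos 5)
Your overall route is the one the paper itself follows: reduce the difference in marginals to a difference in costs via an (a.s.) invertible change of coordinates, apply Proposition~\ref{pr:costStability} with $\|c-c\circ T\|_{L^p(P)}\le \Lip_p(c)\,\Delta$ (Lemma~\ref{lem:costislp}) and $a\le 2\exp(N\|c\|_\infty)$, and add the geometric term $N^{1/q-1/p}\Delta$ by a triangle inequality; your model case is exactly Lemma~\ref{lem:invertibleequiv}, and your estimates, including the $p=\infty$, $q=1$ specialization giving $\ell=N+(C_1/\sqrt2)\Lip_\infty(c)$, match the paper's. Where you diverge is in how invertibility is forced in general: you tensor each $X_i$ with $([0,1],\mathrm{Leb})$, perturb the metric by $\delta|s-t|$, and invoke a parametrized isomorphism (``randomization'') lemma to realize a $W_p$-optimal coupling $\kappa_i=\mu_i\otimes K_i$ as a bimeasurable bijection on the atomless enlarged spaces, then let $\delta\to0$. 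The paper instead enlarges to $X_i\times(-1,1)$, embeds the marginals as $\mu_i\otimes\delta_0$, and approximates them by discrete (or, for $p=\infty$, quantized) measures whose atoms are split by distinct auxiliary coordinates, so that the $W_p$-optimal coupling is trivially a one-to-one map (Lemma~\ref{lem:wloginvertible}), and then passes to the limit $\epsilon\to0$ using Theorem~\ref{th:valueConv}\,(i). Your device has the aesthetic advantage that the enlarged optimizers are exactly $\pi^*\otimes\mathrm{Leb}^{\otimes N}$, so no convergence of optimizers is needed, but its key step --- a jointly measurable family of mod-$0$ isomorphisms between the fibers $K_i(x_i)\otimes\mathrm{Leb}$ and $\mathrm{Leb}$, composed on both sides to get $T_i$ --- is asserted rather than proved, and it is precisely the kind of measure-theoretic machinery the paper's elementary atom-splitting is designed to avoid (cf.\ the separability caveat in Remark~\ref{rk:generalMetrics}). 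Also, your statement that the transport inequality is ``affected only by $O(\delta)$'' should be made precise: projecting the relative entropy by data processing and lifting a $W_q$-optimal coupling of the $X$-projections gives \eqref{eq:transportineq} on the enlarged space with the same $C_q$ plus an \emph{additive} error of order $\delta$, which indeed suffices after $\delta\to0$, but is not a multiplicative perturbation of the constant.

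The one substantive omission is the sharpness of $\ell$, which is part of the statement: you describe the right family (two-point marginals, an $\eps$-perturbation, additive geometric and cost-perturbation contributions) but do not produce the cost or compute the optimizers. The paper's Example~\ref{ex:sharpness} does this explicitly on $[-1,1]^2$ with $\mu_1=\mu_2=\frac12(\delta_{-1}+\delta_1)$, $\tilde\mu_i=\frac12(\delta_{-1+\eps}+\delta_{1-\eps})$ and a cost taking only the values $0$ and $\eps$, so that $\Lip_\infty(c)=1$, the optimizers are computable through $\alpha(\eps)=e^{\eps}/(1+e^{\eps})$, and $W_1(\pi^*,\tilde\pi^*)=3\eps+\mathcal{O}(\eps^2)$ against $W_\infty(\mu_1,\mu_2;\tilde\mu_1,\tilde\mu_2)=\eps$, matching $\ell=2+1=3$; to complete your proof you would need to supply such a computation.
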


As discussed in the Introduction, this result is based on a transformation: instead of dealing with two sets of marginals, we use a change of coordinates to transform $\tilde{\mu}_{i}$ to $\mu_{i}$, at the expense of also transforming the cost function. For the resulting problem, we can apply Proposition~\ref{pr:costStability}. The sharpness of the constant~$\ell$ is discussed in Example~\ref{ex:sharpness}.

\begin{remark}\label{rk:generalMetrics}
	For simplicity, we have stated our results in the traditional setting where $W_{p}$ is defined through a metric compatible with the underlying Polish space. However, much of the above generalizes to any measurable metric. For instance, the discrete metric can be used to see that for $p=1$, our results include the total variation distance (see also~\cite{NutzWiesel.22} for further results on continuity in total variation). The majority of our arguments extend without change to the more general setting. In Definition~\ref{de:shadow}, it is no longer clear that there is a coupling attaining $W_{p}(\mu_{i},\tilde\mu_{i})$. However, we can use an $\epsilon$-optimal coupling to define an ``approximate shadow'' for which the first part of Lemma~\ref{le:shadow} is  replaced by $W_p(\pi, \tilde{\pi}) \leq  W_{p}(\mu_1, \dots, \mu_N;\tilde\mu_1, \dots, \tilde\mu_N) + \epsilon$, and then we can argue the main results as before.
	The extension to measurable metrics also applies to Proposition~\ref{pr:costStability}. Theorem~\ref{th:second} extends with the caveat that one needs to provide a substitute for the technical Lemma~\ref{lem:wloginvertible}\,(ii) in the specific metric under consideration, as its proof uses separability of the metric.
\end{remark}

\subsection{Application to Sinkhorn's Algorithm}

In this section we focus on $N=2$ marginals $\mu_{1},\mu_{2}$. Sinkhorn's algorithm is initialized at $\pi^{0} := P_c$, where $\frac{dP_c}{d(\mu_1 \otimes \mu_2)}(x) = \frac{\exp(-c(x))}{\int \exp(-c) \,d(\mu_1\otimes\mu_2)}$ is the Gibbs kernel associated with the cost~$c$. The Sinkhorn iterates $\pi^{n} \in \mathcal{P}(X)$, $n\geq1$ can then be defined recursively via
\begin{align*}
\frac{d\pi^{n}}{d\pi^{n-1}}(x) &:= \frac{d\mu_1}{d\pi^{n-1}_1}(x_1) && \hspace{-3em}\text{for $n$ odd,}\\
\frac{d\pi^{n}}{d\pi^{n-1}}(x) &:= \frac{d\mu_2}{d\pi^{n-1}_2}(x_2) &&\hspace{-3em}\text{for $n$ even,}
\end{align*}
where $\pi^{n-1}_i$ is the $i$-th marginal of~$\pi^{n-1}$. It follows that  
$\pi^{n}_{1}=\mu_{1}$ for~$n$ odd and $\pi^{n}_{2}=\mu_{2}$ for~$n$ even: for each iterate, one of the two marginals is the correct marginal. The other marginal does not match~$\mu_{i}$, but converges to it as $n\to\infty$. Importantly, each iterate~$\pi^{n}$ is the solution of an entropic optimal transport problem between its own marginals. As these marginals converge to~$(\mu_{1},\mu_{2})$, the convergence of Sinkhorn's algorithm can  be framed as a particular instance of stability with respect to the marginals. As above, we denote by~$\pi^*$ the optimizer of~$S_{\rm ent}(\mu_1, \mu_2, c)$. Moreover, we write $$\mathcal{F}(\pi) := \int c \,d\pi + D_{\rm KL}(\pi, \mu_1 \otimes \mu_2)$$ for the entropic cost of~$\pi \in \mathcal{P}(X)$, similarly as in Remark~\ref{rk:gammaconv} but without the penalty.

\begin{theorem}[Sinkhorn Convergence]\label{th:sinkhorn}
	Let $p \in [1, \infty)$. For $i=1,2$, let $\mu_i\in\cP(X_{i})$ satisfy
	$
	\int \exp(\alpha \, d_{X_i}(\hat{x}_i, x_i)^{p}) \, \mu_i(dx_i)< \infty
	$
	for some $\alpha \in (0, \infty)$ and $\hat x_{i}\in X_{i}$.
	\begin{itemize}
		\item[(i)] Let $c$ be continuous with growth of order~$p$. As $n\to\infty$, we have
		\begin{align*}
   		\cF(\pi^{n}) \to \cF(\pi^*), \quad \qquad \pi^{n} \to \pi^* \quad  \mbox{in} \quad W_{p}.
		\end{align*}

	\item[(ii)] Let $1\leq q\leq p$ and $c(x) = f(x) g(x)$ where $f, g$ are Lipschitz  with growth of order $p-1$. For all $n\geq2$, with a constant~$c_{0}$ detailed in the proof,
	\begin{align*}
		| \mathcal{F}(\pi^*) - \mathcal{F}(\pi^{n}) |  &\leq c_{0} n^{-\frac{1}{2p}}, \quad \qquad W_q(\pi^*, \pi^{n})\leq c_{0} n^{-\frac{1}{4pq}}.
	\end{align*}
	\end{itemize}
\end{theorem}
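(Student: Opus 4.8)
The plan is to recognize the Sinkhorn convergence theorem as a direct consequence of the stability results already established, combined with a known convergence rate for the Sinkhorn marginals. The key structural fact is that each iterate $\pi^{n}$ is itself the entropic optimizer $S_{\rm ent}(\pi^{n}_{1},\pi^{n}_{2},c)$ between its own marginals $(\pi^{n}_{1},\pi^{n}_{2})$, one of which always coincides with the correct marginal. So the problem reduces to: (a) showing $W_{p}(\pi^{n}_{i},\mu_{i})\to 0$ (qualitatively) and with a rate (quantitatively), and (b) feeding this into Theorem~\ref{th:valueConv}\,(i) and Theorem~\ref{th:opti} respectively. I would first verify that the hypothesis—finite exponential moment of order $p$—guarantees $\mu_{i}\in\cP_{p}(X_{i})$ and, via Lemma~\ref{le:transportineq}\,(iii), that the relevant transport inequality $({\rm I}'_{q})$ holds with a finite constant $C'_{q}$ for every $q\le p$; this is what makes Theorem~\ref{th:opti} applicable. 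One also needs $\pi^{n}_{i}$ to stay in $\cP_{p}$ uniformly; I would check that the exponential moment bound propagates along the iterates (the densities $d\mu_{i}/d\pi^{n-1}_{i}$ are bounded in the relevant sense because $c$ has growth of order $p$ and the marginals have exponential moments), so that the moments of $\pi^{n}_{i}$—and hence the constant $L$ in $({\rm A}_{L})$ coming from Lemma~\ref{le:ccond}—are bounded uniformly in $n$.

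For part~(i): since $W_{p}(\pi^{n}_{i},\mu_{i})\to 0$, Theorem~\ref{th:valueConv}\,(i) applied with the marginal sequence $(\pi^{n}_{1},\pi^{n}_{2})\to(\mu_{1},\mu_{2})$ gives $S_{\rm ent}(\pi^{n}_{1},\pi^{n}_{2},c)\to S_{\rm ent}(\mu_{1},\mu_{2},c)$ and convergence of the associated optimizers in $W_{p}$. But the optimizer for marginals $(\pi^{n}_{1},\pi^{n}_{2})$ is exactly $\pi^{n}$, and the limiting optimizer is $\pi^{*}$, so $\pi^{n}\to\pi^{*}$ in $W_{p}$. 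The convergence $\cF(\pi^{n})\to\cF(\pi^{*})$ then follows either from $S_{\rm ent}(\pi^{n}_{1},\pi^{n}_{2},c)\to S_{\rm ent}(\mu_{1},\mu_{2},c)$ together with an argument that the penalty term $D_{\rm KL}(\pi^{n},\pi^{n}_{1}\otimes\pi^{n}_{2})$ converges appropriately, or more directly from lower semicontinuity plus the fact that $\cF(\pi^{n}) = \int c\,d\pi^{n} + D_{\rm KL}(\pi^{n},\mu_{1}\otimes\mu_{2})$ and one controls the difference between $D_{\rm KL}(\pi^{n},\mu_{1}\otimes\mu_{2})$ and $D_{\rm KL}(\pi^{n},\pi^{n}_{1}\otimes\pi^{n}_{2})$ by the relative densities of the marginals, which tend to $1$. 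The cleanest route is probably: $\cF(\pi^{n}) = S_{\rm ent}(\pi^{n}_1,\pi^{n}_2,c) + [D_{\rm KL}(\pi^{n},\mu_1\otimes\mu_2) - D_{\rm KL}(\pi^{n},\pi^{n}_1\otimes\pi^{n}_2)]$ and the bracket $\to 0$.

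For part~(ii): here $c = fg$ with $f,g$ Lipschitz of growth order $p-1$, so Lemma~\ref{le:ccond} gives $({\rm A}_{L})$ with a constant $L$ depending only on Lipschitz/growth constants and the $p$-th moments of the marginals involved; using the uniform moment bound on $\pi^{n}_{i}$ we get a single $L$ valid for all $n$. Then Theorem~\ref{th:opti} (second display, using $({\rm I}'_{q})$ from Lemma~\ref{le:transportineq}\,(iii)) applied to the marginal pairs $(\mu_1,\mu_2)$ and $(\pi^{n}_1,\pi^{n}_2)$ yields $W_{q}(\pi^{*},\pi^{n}) \lesssim \Delta_{n} + (\Delta_{n})^{1/q} + (\Delta_{n})^{1/(2q)}$ where $\Delta_{n} = W_{p}(\mu_1,\mu_2;\pi^{n}_1,\pi^{n}_2)$; for small $\Delta_n$ the dominant term is $\Delta_n^{1/(2q)}$. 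One now invokes the sublinear convergence rate of the Sinkhorn marginals from~\cite{Leger.21}: in the present normalization this gives a bound of the form $D_{\rm KL}$ or an objective gap decaying like $n^{-1/2}$ (more precisely $\mathcal{F}(\pi^{n})-\mathcal{F}(\pi^{*})\lesssim n^{-1}$ or the marginal mismatch controlled accordingly), which via the transport inequality translates into $\Delta_{n}\lesssim n^{-1/(2p)}$; then $W_{q}(\pi^{*},\pi^{n})\lesssim (n^{-1/(2p)})^{1/(2q)} = n^{-1/(4pq)}$. Similarly $|\mathcal{F}(\pi^{*})-\mathcal{F}(\pi^{n})|$ is controlled: either directly by Léger's rate, or by writing the gap in terms of $\Delta_n$ and the value stability of Theorem~\ref{th:valueConv}\,(ii), giving the $n^{-1/(2p)}$ bound.

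The main obstacle I expect is the bookkeeping in translating the convergence rate from~\cite{Leger.21} into a rate for $\Delta_{n}=W_{p}(\mu_{1},\mu_{2};\pi^{n}_{1},\pi^{n}_{2})$ in the correct normalization: Léger's result is typically phrased as a decay of the objective value gap (or a Bregman-divergence / KL-type quantity between iterates), and one must (i) pass from that to a bound on the mismatch of the marginals $D_{\rm KL}(\mu_{i},\pi^{n}_{i})$ or $D_{\rm KL}(\pi^{n}_{i},\mu_{i})$—this uses the explicit Sinkhorn update structure, since the marginal mismatch at step $n$ equals (essentially) the decrease in objective at step $n+1$—and then (ii) apply the transport inequality $({\rm I}'_{p})$ or $({\rm I}'_{q})$ from Lemma~\ref{le:transportineq}\,(iii) to convert this KL bound into the $W_{p}$ (or $W_{q}$) bound $\Delta_{n}\lesssim n^{-1/(2p)}$. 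A secondary technical point is ensuring all constants (the $L$ from $({\rm A}_{L})$, the $C'_{q}$ from the transport inequality, the moment bounds on $\pi^{n}_{i}$) are genuinely uniform in $n$; this requires a short a priori estimate showing the exponential moments of the Sinkhorn marginals do not blow up, which follows because the Sinkhorn densities are pointwise bounded between $\exp(-c)$-type weights, and $c$ having growth order $p$ keeps everything within the exponential-moment class. Once those two points are handled, assembling the stated bounds $c_{0}n^{-1/(2p)}$ and $c_{0}n^{-1/(4pq)}$ is routine.
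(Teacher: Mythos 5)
Your proposal follows essentially the same route as the paper: recognize $\pi^{n}$ as the entropic optimizer between its own marginals, take the marginal bound $D_{\rm KL}(\pi^{n}_i,\mu_i)\lesssim n^{-1}$ from \cite{Leger.21} (which is stated there directly for the marginals, so no translation from an objective gap is needed), convert it into $W_p(\pi^{n}_i,\mu_i)\lesssim n^{-1/p}+n^{-1/(2p)}$ via the weighted Pinsker inequalities of \cite{BolleyVillani.05}, and then feed $\Delta_n$ into Theorem~\ref{th:valueConv} and Theorem~\ref{th:opti} (with \eqref{eq:transportineq2} supplied by Lemma~\ref{le:transportineq} for the fixed marginals), together with the decomposition $D_{\rm KL}(\pi^{n},\mu_1\otimes\mu_2)=D_{\rm KL}(\pi^{n},\pi^{n}_1\otimes\pi^{n}_2)+\sum_i D_{\rm KL}(\pi^{n}_i,\mu_i)$; this is exactly the paper's argument, including the exponent bookkeeping $\Delta_n^{1/(2q)}\lesssim n^{-1/(4pq)}$.

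One caveat: the step where you propose to prove that exponential moments propagate along the Sinkhorn iterates (arguing the densities are ``bounded in the relevant sense'') is both unnecessary and not sound as sketched, since for unbounded $c$ the Sinkhorn potentials and densities are not bounded. The paper sidesteps this entirely: the Bolley--Villani inequality is applied with the fixed marginal $\mu_i$ (which carries the exponential moment) as reference and holds for arbitrary $\pi^{n}_i$, so the resulting bound on $W_p(\pi^{n}_i,\mu_i)$ already implies that the $p$-th moments of $\pi^{n}_i$ are finite and uniformly bounded in $n$; that is all Lemma~\ref{le:ccond} needs to produce a single constant $L$ in \eqref{eq:ccond}, while the transport inequality in Theorem~\ref{th:opti} is only required for $(\mu_1,\mu_2)$. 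With that simplification your plan coincides with the paper's proof.
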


Theorem~\ref{th:sinkhorn} with $p=q=2$ implies $W_{2}$-convergence for quadratic cost with subgaussian marginals. 
 The form $c(x) = f(x) g(x)$ can be extended as in Remark~\ref{rk:ccondMultifactor}, or more generally to any condition guaranteeing~\eqref{eq:ccond} uniformly over the marginals produced by the algorithm. In particular, using Example~\ref{ex:pccond}, the assertion of the theorem also holds for $c(x)=\|x_{2}-x_{1}\|^{p}$.
The more detailed estimate given in the proof of the theorem shows that the constant~$c_{0}$ is at the same scale as~$c$; in particular, it does not grow exponentially with~$c$.

\section{Proofs}\label{se:proofs}

\subsection{Shadows and Preliminaries}\label{se:proofsShadowsAndPrelims}

For the convenience of the reader, we first recall the data processing inequality for our setting. Let $Y_1$ and $Y_2$ be Polish spaces. If $\mu\in\cP(Y_{1})$ and $K: Y_1 \rightarrow \mathcal{P}(Y_2)$ is a stochastic kernel,  we
\begin{equation}\label{eq:2ndMarginalNotation}
  \mbox{denote by $\mu K\in\cP(Y_{2})$ the second marginal of $\mu \otimes K\in\cP(Y_1 \times Y_2)$}.
\end{equation}

\begin{lemma}%
	\label{le:data}
	Let $\mu, \nu \in \mathcal{P}(Y_1)$ and $K: Y_1 \rightarrow \mathcal{P}(Y_2)$ a kernel. Then
	\[
	D_f(\mu K, \nu K) \leq D_f(\mu, \nu).
	\]
\end{lemma}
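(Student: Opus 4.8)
The statement to prove is the data processing inequality for $f$-divergences (Lemma~\ref{le:data}): for $\mu,\nu\in\mathcal{P}(Y_1)$ and a kernel $K\colon Y_1\to\mathcal{P}(Y_2)$, one has $D_f(\mu K,\nu K)\le D_f(\mu,\nu)$.

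\textbf{Approach.} The plan is to reduce the inequality to Jensen's inequality applied to the convex function $f$, using the conditional expectation that the kernel $K$ induces. First I would dispose of the trivial case: if $\mu\not\ll\nu$ then the right-hand side is $+\infty$ and there is nothing to prove, so assume $\mu\ll\nu$ with density $\varphi:=d\mu/d\nu$ on $Y_1$. I would then work on the product space $Y_1\times Y_2$ equipped with the two measures $\mu\otimes K$ and $\nu\otimes K$. The key observation is that $\mu\otimes K\ll\nu\otimes K$ with density $\frac{d(\mu\otimes K)}{d(\nu\otimes K)}(x,y)=\varphi(x)$, i.e.\ the density depends on the first coordinate only, since $K$ is the same kernel on both sides. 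Writing $\rho:=d(\mu K)/d(\nu K)$ for the density of the second marginals (absolute continuity of $\mu K\ll\nu K$ following from that of $\mu\otimes K\ll\nu\otimes K$), the Radon--Nikodym density $\rho(y)$ is precisely the conditional expectation of $\varphi(x)$ given the second coordinate $y$, under the measure $\nu\otimes K$.

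\textbf{Key steps.} Concretely: (1) verify $\mu K\ll\nu K$ and identify $\rho$; (2) for any bounded measurable $h$ on $Y_2$, by definition of the second marginal and of $\rho$, $\int h(y)\varphi(x)\,(\nu\otimes K)(dx,dy)=\int h(y)\,(\mu K)(dy)=\int h(y)\rho(y)\,(\nu K)(dy)=\int h(y)\rho(y)\,(\nu\otimes K)(dx,dy)$, which exhibits $\rho(y)=\mathbb{E}_{\nu\otimes K}[\varphi\mid Y_2](y)$; (3) apply the conditional Jensen inequality for the convex function $f$: $f(\rho(y))=f\big(\mathbb{E}_{\nu\otimes K}[\varphi\mid Y_2](y)\big)\le \mathbb{E}_{\nu\otimes K}[f(\varphi)\mid Y_2](y)$ for $\nu K$-a.e.\ $y$; (4) integrate against $\nu K$ and use the tower property to conclude
\[
D_f(\mu K,\nu K)=\int f(\rho)\,d(\nu K)\le\int f(\varphi)\,d(\nu\otimes K)=\int f(\varphi)\,d\nu=D_f(\mu,\nu).
\]
One should note that $f$ is bounded below (part of the standing hypothesis on $f$), so $f\circ\varphi$ has a well-defined (possibly $+\infty$) integral and all the manipulations, including the application of conditional Jensen and Tonelli-type exchanges, are legitimate; if $D_f(\mu,\nu)=\infty$ there is nothing to prove, otherwise $f\circ\varphi\in L^1(\nu)$ and everything is finite.

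\textbf{Main obstacle.} None of the steps is deep; the only point requiring a little care is the measure-theoretic bookkeeping in step (2)—namely justifying that the second-marginal density $\rho$ really is a version of the conditional expectation of $\varphi$ under $\nu\otimes K$ on a general Polish space, and handling the lower-boundedness of $f$ so that conditional Jensen applies without integrability pathologies. Alternatively, and perhaps cleanest for exposition, one can invoke the variational (Donsker--Varadhan-type) representation of $D_f$ as a supremum of affine functionals $\int g\,d\mu-\int f^*(g)\,d\nu$ over bounded $g$, and observe that plugging $g=h\circ(\text{projection})$ for $h$ on $Y_2$ into the representation for $D_f(\mu,\nu)$ recovers exactly the functionals defining $D_f(\mu K,\nu K)$—so the supremum over the smaller class is no larger. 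Either route is short; I would present the conditional-Jensen argument as the primary proof.
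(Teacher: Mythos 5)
Your proof is correct and is essentially the paper's argument: the paper also passes to the product measures $\mu\otimes K$, $\nu\otimes K$, notes that their density is $\frac{d\mu}{d\nu}(x)$, and then applies Jensen's inequality through the disintegration with respect to the second coordinate, which is exactly your conditional-expectation identification of $d(\mu K)/d(\nu K)$ phrased with explicit reverse kernels instead of conditional Jensen. The variational-representation alternative you mention is fine too, but your primary route matches the paper's proof.
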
	

\begin{proof}
  We may assume that $\mu\ll\nu$. For any kernels $K_1\ll K_2 : Y_1 \rightarrow \mathcal{P}(Y_2)$,
 \begin{equation}\label{eq:den} 
\frac{d(\mu \otimes K_1)}{d(\nu \otimes K_2)}(x, y) = \frac{d\mu}{d\nu}(x) \frac{dK_1(x)}{dK_2(x)}(y)  \quad \nu \otimes K_{2}\as
\end{equation}
  In particular,
  $
   \frac{d(\mu \otimes K)}{d(\nu \otimes K)}(x, y) = \frac{d\mu}{d\nu}(x)%
  $
  and thus
  \begin{equation}\label{eq:dpi1} 
   D_f(\mu, \nu) = D_f(\mu \otimes K, \nu \otimes K).
  \end{equation}
  Whereas in general, \eqref{eq:den} and Jensen's inequality for~$f$ yield
  \begin{align}\label{eq:dpi2} 
    D_f(\mu \otimes K_{1}, \nu \otimes K_{2}) &= \iint f\left( \frac{d\mu}{d\nu}(x) \frac{dK_1(x)}{dK_2(x)}(y) \right)\, K_{2}(x,dy)\nu(dx) \notag\\
    &\geq \int f\left( \frac{d\mu}{d\nu}(x)\right)\, \nu(dx) = D_f(\mu,\nu).
  \end{align} 
  Denote by
  $\mu \otimes K = (\mu K) \otimes \tilde{K}_1$ and $\nu \otimes K = (\nu K) \otimes \tilde{K}_2$  the ``reverse'' disintegrations from the second marginal to the first. Applying~\eqref{eq:dpi2} to $(\mu K) \otimes \tilde{K}_1$ and $(\nu K) \otimes \tilde{K}_2$,
  \begin{equation*} 
  D_f(\mu \otimes K, \nu \otimes K) = D_f((\mu K) \otimes \tilde{K}_1, (\nu K) \otimes \tilde{K}_2) \geq D_f(\mu K, \nu K).
  \end{equation*}
 In view of~\eqref{eq:dpi1}, this yields the claim.
\end{proof}

We can now show the two fundamental properties of the shadow.

\begin{proof}[Proof of Lemma~\ref{le:shadow}.]
	Let $\mu_i \otimes K_i \in \Pi(\mu_i, \tilde\mu_i)$ be a $W_p$-optimal coupling and define $\kappa = \pi \otimes K \in \mathcal{P}(X \times X)$ where 
	$
	K(x) = K_1(x_1) \otimes \dots \otimes  K_N(x_N),
	$
	so that $\tilde\pi := \pi K$ is the shadow of~$\pi$. In view of $\kappa \in \Pi(\pi, \tilde\pi)$, for $p < \infty$,
	\begin{align*}
	 W_p(\pi, \tilde\pi)^p &\leq \int d_{X, p}(x, y)^p \,\kappa(dx, dy) \\
	&= \int \sum_{i=1}^N d_{X_i}(x_i, y_i)^p \,\kappa(dx, dy) 
	= \sum_{i=1}^N  W_p(\mu_i, \tilde{\mu}_i)^p.
	\end{align*}
	On the other hand, given an  arbitrary coupling $\tilde\pi\in\Pi(\tilde\mu_1, \dots, \tilde\mu_N)$, any coupling $\gamma \in \Pi(\pi, \tilde{\pi})$ induces couplings $\gamma_{i} \in \Pi(\pi_{i}, \tilde{\pi}_{i})=\Pi(\mu_{i}, \tilde{\mu}_{i})$ of the individual marginals, hence
	\begin{align*}
	W_p(\pi, \tilde{\pi})^p &= \inf_{\gamma \in \Pi(\pi, \tilde{\pi})} \int \sum_{i=1}^N d_{X_i}(x_i, y_i)^p \,\gamma(dx, dy) \\
	&\geq \sum_{i=1}^N \inf_{\gamma_i \in \Pi(\mu_i, \tilde{\mu}_i)} \int d_{X_i}(x_i, y_i)^p \,\gamma_i(dx_i, d y_i) = \sum_{i=1}^N W_p(\mu_i, \tilde{\mu}_i)^p.
	\end{align*}
	The argument for $p=\infty$ is similar, completing the proof of the first claim. To show the bound on the divergence, note that 
	$\tilde\mu_1 \otimes \dots \otimes \tilde\mu_N = (\mu_1 \otimes \dots \otimes \mu_N)K$.
	Therefore, the data processing inequality (Lemma~\ref{le:data}) yields
	\[
	D_f(\tilde\pi, \tilde\mu_1 \otimes \dots \otimes \tilde\mu_N) = D_f(\pi K, (\mu_1 \otimes \dots \otimes \mu_N)K) \leq D_f(\pi, \mu_1 \otimes \dots \otimes \mu_N). \qedhere
	\]
\end{proof} 

\begin{remark}\label{rk:projection}
  The preceding proof shows that the shadow is a $W_{p}$-projection onto $\Pi(\tilde\mu_1, \dots, \tilde\mu_N)$; that is,
  $
    \tilde\pi \in \argmin_{\Pi(\tilde\mu_1, \dots, \tilde\mu_N)} W_{p}(\pi,\cdot).
  $ 
  In general, the argmin may have more than one element. A simple example on $\R\times\R$ is $\mu_1 = \mu_2 = \delta_0$ and $\tilde{\mu}_1 = \tilde{\mu}_2 = (\delta_{-1} + \delta_1)/2$; here any element of $\Pi(\tilde\mu_1,\tilde\mu_2)$ has the same distance to the singleton $\Pi(\mu_1,\mu_2)=\{\delta_{(0,0)}\}$. In this example, the shadow of $\pi:=\delta_{(0,0)}$ is unique. Clearly, not any projection is a shadow, and most projections fail to satisfy the divergence bound in Lemma~\ref{le:shadow}.
\end{remark}

Next, we show the criteria for~\eqref{eq:ccond}.

\begin{proof}[Proof of Lemma~\ref{le:ccond} and Example~\ref{ex:quadraticccond}.]
  To show the lemma, let $\kappa\in\Pi(\pi,\tilde\pi)$ be a coupling attaining $W_{p}(\pi,\tilde\pi)$. Then
  \begin{align}\label{eq:cproduct}
    &\int c \,  d(\pi- \tilde\pi) 
    = \int c(x) -c(y) \, \kappa(dx,dy) \notag\\
    & = \int f(x) (g(x)-g(y)) \, \kappa(dx,dy) + \int g(y) (f(x)-f(y)) \, \kappa(dx,dy).
  \end{align} 
  We estimate the first integral; the second is treated analogously. H\"older's inequality with $q$ such that $1/p+1/q=1$ yields
  \begin{align*}
    \int |f(x) (g(x)-g(y))| \, \kappa(dx,dy) \leq \|f\|_{L^{q}(\pi)} \|g(x)-g(y)\|_{L^{p}(\kappa)}.
  \end{align*}
  As $|f(x)|\leq C_{f} [1+d_{X_{1}}(x_{1},\bar x_{1})^{l} + \cdots+d_{X_{N}}(x_{N},\bar x_{N})^{l}]$ with $l\leq p-1=p(1-1/p)=p/q$ and hence $lq\leq p$, and as $\pi$ has marginals $\mu_{i}\in\cP_{p}(X_{i})$, we see that $\|f\|_{L^{q}(\pi)}$ is finite with a bound depending only on the $p$-th moments of $\mu_{i}$, $i=1,\dots,N$. On the other hand,
  $$
    \|g(x)-g(y)\|_{L^{p}(\kappa)} \leq \Lip_{p} (g) W_p(\pi, \tilde{\pi})
  $$
  due to the fact that $\kappa$ attains $W_p(\pi, \tilde{\pi})$. The lemma follows. 
  Example~\ref{ex:quadraticccond} follows from the above  estimate with $f(x)=g(x)=\|x_{1}-x_{2}\|$ in which case $\Lip_{2}(f)=\Lip_{2}(g)=\sqrt{2}$.
\end{proof} 

\begin{remark}\label{rk:ccondMultifactor}
  Lemma~\ref{le:ccond} can be generalized to a product of any finite number of Lipschitz functions.
  Let $c(x)=c_{1}(x)\cdots c_{m}(x)$ where $c_{j}$ are Lipschitz and decompose $c(x)-c(y)$ as in~\eqref{eq:cproduct} with $f(x):=c_{1}(x)\cdots c_{m-1}(x)$ and $g(x):=c_{m}(x)$. Proceeding inductively, we obtain that 
  $$
    c(x)-c(y) = \sum_{j=1}^{m} A_{j}(x,y) (c_{j}(x)-c_{j}(y))
  $$
  where $A_{j}(x,y)$ is a product of $m-1$ factors of the form $c_{k}(x)$ or $c_{l}(y)$. If $c_{j}(x)$, $j=1,\dots,m$ satisfy a growth condition suitably coordinated with a moment  condition on $\mu_{i},\tilde\mu_{i}$, then $\|A_{j}(x,y)\|_{L^{q}(\pi)}$ and $\|A_{j}(x,y)\|_{L^{q}(\tilde\pi)}$ can be bounded in terms of those moments and we deduce an analogue of Lemma~\ref{le:ccond}.
\end{remark}

\begin{proof}[Proof of Example~\ref{ex:pccond}.]
  Let $\kappa$ be a $W_p$-optimal coupling of $\pi$ and $\tilde{\pi}$. Set $\psi(x) := \|x\|^p$ and define $\varphi: [0, 1] \rightarrow \R$ by
	\[
	\varphi(t) := \int \psi((1-t) (x_2-x_1) + t (y_2-y_1)) \,\kappa(dx, dy);
	\]
	then $c(x)=\psi(x_2-x_1)$ and the quantity to be estimated is
	\begin{equation}\label{eq:pccondProof}
	  \left|\int c \,d\pi - \int c \,d\tilde\pi\right| = |\varphi(0) - \varphi(1)|.
	\end{equation}
	Using differentiation under the integral (justified by~\cite[Theorem~2.27]{Folland.99}), we see that $\varphi$ is differentiable and
	\[
	\frac{\partial \varphi}{\partial t}(t) = \int \big\langle \nabla \psi ((1-t) (x_2-x_1) + t (y_2-y_1)), (y_2-y_1-x_2+x_1)\big\rangle \,\kappa(dx, dy).
	\]
	Noting $\|\nabla \psi(v)\| = p\|v\|^{p-1}$ and writing $v_t = (1-t) (x_2-x_1) + t (y_2-y_1)$, the inequalities of Cauchy--Schwarz, H\"older and $(a+b)^{p}\leq 2^{p-1}(a^{p}+b^{p})$ yield
		\begin{align*}
	&\left| \frac{\partial \varphi}{\partial t}(t) \right| \leq \int \|\nabla \psi (v_t)\| \|(y_2-x_2) + (x_1-y_1)\| \,\kappa(dx, dy) \\
	&\leq \left(\int \|\nabla \psi  (v_t)\|^{\frac{p}{p-1}} \,\kappa(dx, dy)\right)^{\frac{p-1}{p}}  \!\left( \int \|(y_2-x_2) + (x_1-y_1)\|^{p} \,\kappa(dx, dy)\right)^{\frac{1}{p}} \\
	&\leq C'_p \left(\int \|v_t\|^{p} \,\kappa(dx, dy)\right)^{\frac{p-1}{p}} W_p(\pi, \tilde{\pi}) \\
	& \leq C_{p} \big[M_{p}(\mu_{1}) + M_{p}(\tilde\mu_{1}) + M_{p}(\mu_{2}) + M_{p}(\tilde\mu_{2})\big]^{p-1} W_p(\pi, \tilde{\pi})
	\end{align*}
	where $C_p,C_p'$ are constants depending only on $p$. In view of~\eqref{eq:pccondProof}, the claim follows.
\end{proof} 
\subsection{Stability through Shadows}\label{se:proofsStabilityThroughShadows}

We can now  show the continuity of the value.

\begin{proof}[Proof of Theorem \ref{th:valueConv}]
	(i) Let $\pi^{*},\pi^*_n$ be the optimizers for $S(\mu_1, \dots, \mu_N, c)$ and $S(\mu_1^n, \dots, \mu_N^n, c)$, respectively. For brevity, set $P = \mu_1 \otimes \dots \otimes \mu_N$ and $P_n = \mu_1^n \otimes \dots \otimes \mu_N^n$. 
	After passing to a subsequence, $\pi_{n}$ converges in $W_{p}$ to some $\pi\in\Pi(\mu_1, \dots, \mu_N)$, by weak compactness. We have 
	\[
	\liminf_{n\rightarrow \infty} \int c \,d \pi^*_n +D_f(\pi^*_n, P_n) \geq \int c \,d\pi + D_f(\pi, P) \geq \int c \,d\pi^* + D_f(\pi^*, P)
	\]
	by lower semicontinuity of $\int c\,d(\cdot)+D_f(\cdot, \cdot)$ and optimality of $\pi^*$.  On the other hand, let $\tilde{\pi}_n \in \Pi(\mu_1^n, \dots, \mu_N^n)$ be the shadow of $\pi^{*}$. Then Lemma~\ref{le:shadow} shows $\lim_{n} W_p(\tilde{\pi}_n, \pi^*) = 0$ and $D_f(\tilde{\pi}_n, P_n) \leq D_f(\pi^*, P)$, hence
	\begin{align*}
	\limsup_{n\rightarrow \infty} \int c \,d\pi^*_n + D_f(\pi^*_n, P_n) &\leq \limsup_{n\rightarrow \infty} \int c \,d\tilde{\pi}_n + D_f(\tilde{\pi}_n,  P_n)\\ &\leq \int c \,d\pi^* + D_f(\pi^*, P).
	\end{align*}
	Together, $\lim_n \int c \,d\pi^*_n + D_f(\pi^*_n, P_n) = \int c \,d\pi^* + D_f(\pi^*, P)$ and $\pi$ must be the (unique) optimizer $\pi^*$ of $S(\mu_1, \dots, \mu_N, c)$. In particular,  the original sequence $(\pi_{n}^*)$ converges to $\pi^{*}$, as claimed.
	
	(ii) Let $\pi^{*}$ be the optimizer of $S(\mu_1, \dots, \mu_N, c)$ and $\tilde{\pi} \in \Pi(\tilde{\mu}_1, \dots, \tilde{\mu}_N)$ its shadow. Using~\eqref{eq:ccond} and Lemma~\ref{le:shadow},
	\begin{align*}
	S(\mu_1, \dots, \mu_N, c) &= \int c\,d\pi^* + D_f(\pi^*, \mu_1 \otimes \dots \otimes \mu_N) \\
	&\geq \int c\,d\tilde{\pi} - L W_p(\pi^*, \tilde{\pi}) + D_f(\tilde{\pi}, \tilde{\mu}_1 \otimes \dots \otimes \tilde{\mu}_N) \\
	&\geq S(\tilde{\mu}_1, \dots, \tilde{\mu}_N, c) - L W_{p}(\mu_1, \dots, \mu_N;\tilde\mu_1, \dots, \tilde\mu_N).
\end{align*}	
The claim follows by symmetry.
\end{proof}

The proof of $\Gamma$-convergence follows the  same line of argument.

\begin{proof}[Proof of Remark~\ref{rk:gammaconv}]
  Similarly to the preceding proof, (a) follows from the lower semicontinuity of $\int c\,d(\cdot)+D_f(\cdot, \cdot)$. For~(b), let $\pi_{n}$ be the shadow of $\pi$ and use Lemma~\ref{le:shadow} to obtain $\int c \,d\pi_{n}\to\int c \,d\pi$ and $D_{f}(\pi_{n},\mu_1^n \otimes \dots \otimes \mu_N^n) \leq D_{f}(\pi,\mu_1 \otimes \dots \otimes \mu_N)$, again as in the preceding proof.
\end{proof}

The criteria for the transport inequality~\eqref{eq:transportineq} are derived as follows.

\begin{proof}[Proof of Lemma~\ref{le:transportineq}]
  (i) For the convenience of the reader, we first recall the standard argument for bounded~$X$: combine $d_{X, q}(x, y)^q \leq \diam_q(X)^q \, \1_{x \neq y}$ with the transport representation of total variation distance~\cite[Lemma~2.20]{Massart.07} and Pinsker's inequality~\cite[Theorem~2.16]{Massart.07} to obtain
	\begin{align*}
	  W_q(\pi, \theta)^{q} 
	  &= \inf_{\kappa\in\Pi(\pi,\theta)} \int d_{X,q}(x,y)^{q}\,\kappa(dx,dy) \\
	  &\leq   \diam_q(X)^q \inf_{\kappa\in\Pi(\pi,\theta)} \int\1_{x\neq y}\,\kappa(dx,dy) \\
	  &=  \diam_q(X)^q \|\pi - \theta\|_{TV} 
	  \leq \diam_q(X)^q \Big(\frac{1}{2} D_{\rm KL}(\theta, \pi)\Big)^{1/2}.
	\end{align*}
  The above holds for arbitrary probabilities~$\pi,\theta$. To prove the stronger estimate claimed in the lemma, we improve the above by exploiting that $\pi,\theta \in \Pi(\mu_1, \dots, \mu_N)$. Indeed, let $\Pi_{1}(\pi,\theta)\subset \Pi(\pi,\theta)$ denote the set of couplings~$\kappa\in\Pi(\pi,\theta)$ not moving mass in the $X_{1}$-direction; i.e.,
	$$
	\kappa \{(x_{1},\dots,x_{N},y_{1},\dots,y_{N}): x_{1}=y_{1}\}=1.
	$$
	Note that $\Pi_{1}(\pi,\theta)\neq\emptyset$ due to the fact that~$\pi$ and~$\theta$ have the same marginal~$\mu_{1}$ on~$X_{1}$. Clearly
		\begin{align*}
	  W_q(\pi, \theta)^{q} 
	  &= \inf_{\kappa\in\Pi(\pi,\theta)} \int d_{X,q}(x,y)^{q}\,\kappa(dx,dy) \\
	  &\leq \inf_{\kappa\in\Pi_{1}} \int d_{X,q}(x,y)^{q}\,\kappa(dx,dy) \\
	  &\leq   M^{q} \inf_{\kappa\in\Pi_{1}(\pi,\theta)} \int\1_{x\neq y}\,\kappa(dx,dy), \quad M:=\diam_{q}(X_{2}\times \cdots \times X_{N}).
	\end{align*}
  On the other hand, we claim that~$\pi,\theta$ having the same marginal implies
	\begin{align}\label{eq:TVfixedMarginal}
	  \inf_{\kappa\in\Pi_{1}(\pi,\theta)} \int\1_{x\neq y}\,\kappa(dx,dy) \leq \|\pi - \theta\|_{TV};
	\end{align}	
	in words, where mass needs to be moved, one might as well move only in the directions $X_{2},\dots,X_{N}$. Granted~\eqref{eq:TVfixedMarginal}, we can proceed as in the beginning and conclude the assertion of the lemma,
	\[
	  W_q(\pi, \theta)^{q} \leq M^{q} \|\pi - \theta\|_{TV} 
	  \leq M^{q} \Big(\frac{1}{2} D_{\rm KL}(\theta, \pi)\Big)^{1/2}.
	\]
	To show~\eqref{eq:TVfixedMarginal}, consider the mutually singular measures $\tilde\pi=\pi - (\pi\wedge\theta)$ and $\tilde\theta=\theta - (\pi\wedge\theta)$, where $\pi\wedge\theta$ is defined as usual via $d(\pi\wedge\theta)/d(\pi+\theta) = \min\{d\pi/d(\pi+\theta),d\theta/d(\pi+\theta)\}$. These measures again share a common first marginal, so that $\Pi_{1}(\tilde\pi,\tilde\theta)\neq\emptyset$. Let $\tilde{\kappa}\in \Pi_{1}(\tilde\pi,\tilde\theta)$ be arbitrary and let $\kappa\in\Pi(\pi,\theta)$ be the coupling given by $\kappa=\tilde{\kappa} + i$ where $i$ is the identical coupling of 
$\pi\wedge\theta$ with itself. Then
	\begin{align*}
	  \|\pi - \theta\|_{TV} \leq \int\1_{x\neq y}\,\kappa(dx,dy) 
	  = \int\1_{x\neq y}\,\tilde\kappa(dx,dy) =\|\tilde\pi-\tilde\theta\|_{TV}
	\end{align*} 
	where the last equality follows from mutual singularity. As $\|\tilde\pi-\tilde\theta\|_{TV}=\|\pi - \theta\|_{TV}$, all expressions are equal and~\eqref{eq:TVfixedMarginal} follows.
	
  (ii) It is shown in \cite[Corollary~2.4]{BolleyVillani.05} that the inequality \eqref{eq:transportineq} holds for a given measure $\pi\in\cP(X)$ and all $\theta\in\cP(X)$ whenever 
  \begin{equation}
		\label{eq:transportcond}
		\int \exp(\tilde\alpha \, d_{X, q}(x, \hat{x})^{2q}) \, \pi(dx) < \infty
		\end{equation}
  for some $\tilde\alpha>0$ and $\hat x\in X$, with constant
		\begin{equation}
		\label{eq:transportreference}
		C_{\pi, q} = 2 \inf_{\hat{x} \in X, \tilde\alpha > 0} \left( \frac{1}{2\tilde\alpha} \Big( 1 + \log \int \exp(\tilde\alpha d_{X, q}(\hat{x}, x)^{2q}) \pi(dx)\Big)\right)^{\frac{1}{2q}}.
		\end{equation}
		To obtain the claim for a coupling~$\pi$ (and general $\theta \in \mathcal{P}(X)$),  note that
		\[
		d_{X, q}(\hat{x}, x)^{2q} \leq N \sum_{i=1}^N d_{X, i}(\hat{x}_i, x_i)^{2q} = \frac{1}{N} \sum_{i=1}^N N^2 d_{X, i}(\hat{x}_i, x_i)^{2q}
		\] 
		and that the functional
		$
		f \mapsto \log \int \exp(\tilde\alpha f(x)) \,\pi(dx),
		$
		is convex (as can be seen from a variational representation,  e.g.\ \cite[Example~4.34, p.\,201]{FollmerSchied.11}). Hence
		\begin{align*}
		\log \!\int\! \exp(\tilde\alpha d_{X, q}(\hat{x}, x)^{2q}) \,\pi(dx) &\leq \frac{1}{N} \sum_{i=1}^N \log \!\int\! \exp(\tilde\alpha N^2 d_{X_i}(\hat{x}_i, x_i)^{2q}) \,\mu_i(d x_i).
		\end{align*}
		To obtain the claim for $C_{q}$, we plug this inequality into~\eqref{eq:transportreference} and set $\tilde\alpha=\alpha/N^{2}$. Similarly,  the integrability condition in the lemma implies~\eqref{eq:transportcond}.
	
	(iii) The proof is similar to~(ii) but refers to a different result of~\cite{BolleyVillani.05}. Indeed, by~\cite[Corollary~2.3]{BolleyVillani.05}, it suffices to bound
	\[
	C_{\pi, q}^{'} = 2 \inf_{\hat{x} \in X, \tilde\alpha > 0} \left( \frac{1}{\tilde\alpha} \Big( \frac{3}{2} + \log \int \exp(\tilde\alpha d_{X, q}(\hat{x}, x)^{q}) \pi(dx)\Big)\right)^{\frac{1}{q}}.
	\]
	Here the term inside the exponential already factorizes and we can directly apply the convexity of $f \mapsto \log \int \exp(\tilde{\alpha} f(x)) \pi(dx)$, which yields the claim after the substitution $\tilde{\alpha} = \alpha/N$.
\end{proof}

As a preparation for the proof of Theorem~\ref{th:opti}, we recall a Pythagorean relation for the entropic optimal transport problem. We denote
\[
  \cF(\pi) = \int c \,d\pi + D_{\rm KL}(\pi, \pi_1 \otimes \dots \otimes \pi_N)
\]
where $\pi_1, \dots, \pi_N$ are the marginals of $\pi$.  

\begin{lemma}
	\label{lem:Iprojection}
	If $\pi^*\in \Pi(\mu_1, \dots, \mu_N)$ is the optimizer of $S(\mu_1, \dots, \mu_N, c)$, 
	\[
	D_{\rm KL}(\pi, \pi^*) \leq \cF(\pi) - \cF(\pi^*) \qforallq \pi \in \Pi(\mu_1, \dots, \mu_N).
	\]
\end{lemma}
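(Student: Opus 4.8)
The plan is to prove the Pythagorean inequality by writing out the difference $\cF(\pi) - \cF(\pi^*)$ explicitly in terms of relative densities and identifying the leftover term as $D_{\rm KL}(\pi,\pi^*)$ plus a nonnegative quantity. Fix $P := \mu_1 \otimes \cdots \otimes \mu_N$ and note that every $\pi \in \Pi(\mu_1,\dots,\mu_N)$ appearing here has the same marginals, so $\pi_1 \otimes \cdots \otimes \pi_N = P$ throughout; in particular $\cF$ restricted to $\Pi(\mu_1,\dots,\mu_N)$ is $\int c\,d\pi + D_{\rm KL}(\pi,P)$. If $D_{\rm KL}(\pi,P) = \infty$ the inequality is trivial (the right side is $+\infty$ since $\cF(\pi^*)$ is finite), so assume $\pi \ll P$; one also knows $\pi^* \ll P$ with an explicit Gibbs-type density $\frac{d\pi^*}{dP} = \exp(-c + \varphi_1 \oplus \cdots \oplus \varphi_N)$ for suitable potentials, which in particular is strictly positive $P$-a.s., so $\pi \ll \pi^*$ and $D_{\rm KL}(\pi,\pi^*)$ makes sense.

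The key computation is the chain-rule identity
\[
  D_{\rm KL}(\pi,P) = D_{\rm KL}(\pi,\pi^*) + \int \log\frac{d\pi^*}{dP}\,d\pi.
\]
Rearranging and adding $\int c\,d\pi$ to both sides gives
\[
  \cF(\pi) = D_{\rm KL}(\pi,\pi^*) + \int \Bigl(c + \log\tfrac{d\pi^*}{dP}\Bigr)\,d\pi.
\]
Now substitute the structural form $\log\frac{d\pi^*}{dP} = -c + \varphi_1 \oplus \cdots \oplus \varphi_N$, so the integrand becomes $\varphi_1(x_1) + \cdots + \varphi_N(x_N)$, whose integral against any coupling with marginals $\mu_i$ is $\sum_i \int \varphi_i\,d\mu_i$ — a quantity that does not depend on the particular coupling. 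Applying the same identity with $\pi$ replaced by $\pi^*$ gives $\cF(\pi^*) = \sum_i \int \varphi_i\,d\mu_i$ (the $D_{\rm KL}(\pi^*,\pi^*)$ term vanishes). Subtracting yields exactly $\cF(\pi) - \cF(\pi^*) = D_{\rm KL}(\pi,\pi^*)$, which is the claimed inequality as an equality; the stated $\leq$ is then immediate. An alternative route avoiding explicit potentials is to use optimality of $\pi^*$: for $t \in [0,1]$ set $\pi_t = (1-t)\pi^* + t\pi \in \Pi(\mu_1,\dots,\mu_N)$, compute $\frac{d}{dt}\big|_{t=0^+}\cF(\pi_t)$, use $\cF(\pi_t) \geq \cF(\pi^*)$ to get a first-order inequality, and combine it with the convexity defect of $D_{\rm KL}$ along the segment to recover the $D_{\rm KL}(\pi,\pi^*)$ term.

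The main obstacle is integrability bookkeeping: one must justify that $\int \log\frac{d\pi^*}{dP}\,d\pi$ and $\int \varphi_i\,d\mu_i$ are well-defined and that the chain rule for relative entropy is valid, i.e. that no $\infty - \infty$ arises when splitting $D_{\rm KL}(\pi,P)$. This is where the growth condition \eqref{eq:growthcond} on $c$ and the moment assumptions on the $\mu_i$ enter: they ensure $c \in L^1(\pi)$ for every coupling $\pi$, hence $\int c\,d\pi$ is finite, and together with $D_{\rm KL}(\pi,P) < \infty$ this controls the remaining term. One should also confirm the existence and integrability of the Schrödinger potentials $\varphi_i$ (or, in the variational-route version, the differentiability of $t \mapsto \cF(\pi_t)$ and the dominated-convergence argument behind it), which in the present generality follows from the standard duality theory for \eqref{eq:defSent}; alternatively the potential-free argument sidesteps this at the cost of a slightly more delicate convexity estimate. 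Everything else is a routine manipulation of densities.
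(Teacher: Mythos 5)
Your argument is correct in substance but takes a genuinely different route from the paper. The paper's proof is a two-line reduction: setting $dP_c=\alpha^{-1}e^{-c}\,dP$ with $\alpha=\int e^{-c}\,dP$, it notes that $\cF(\pi)=D_{\rm KL}(\pi,P_c)-\log\alpha$ on $\Pi(\mu_1,\dots,\mu_N)$, so $\pi^*$ is the I-projection of $P_c$ onto this convex set, and then invokes Csisz\'ar's Pythagorean theorem to get $D_{\rm KL}(\pi,P_c)\geq D_{\rm KL}(\pi^*,P_c)+D_{\rm KL}(\pi,\pi^*)$; no potentials are needed, and the only integrability input is $c\in L^1(\pi)$ for the couplings involved. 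Your main route instead imports the Schr\"odinger duality representation $\frac{d\pi^*}{dP}=\exp\bigl(-c+\oplus_i\varphi_i\bigr)$ and combines it with the entropy chain rule to obtain the identity $\cF(\pi)-\cF(\pi^*)=D_{\rm KL}(\pi,\pi^*)$; this buys you the stronger statement with equality (which the paper mentions in passing but does not need), at the price of a nontrivial input: existence of multimarginal potentials with $\varphi_i\in L^1(\mu_i)$ for a merely continuous, possibly unbounded $c$ satisfying \eqref{eq:growthcond}. In the paper that duality is invoked only later, for \emph{bounded} $c$, in the proof of Proposition~\ref{pr:costStability}, so if you follow your route you should cite or prove the $L^1$ multimarginal duality in this generality rather than label it standard; your integrability bookkeeping (finiteness of $\int c\,d\pi$, $\pi\ll\pi^*$ via strict positivity of the Gibbs density, no $\infty-\infty$ in the chain rule) is otherwise sound. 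Your sketched potential-free alternative --- perturbing along $\pi_t=(1-t)\pi^*+t\pi$, using first-order optimality plus the convexity defect of relative entropy --- is essentially the proof of Csisz\'ar's theorem itself, so it lands on the same ground as the paper; either route, carried out carefully, establishes the lemma.
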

\begin{proof}
		Set $P=\mu_1 \otimes \dots \otimes \mu_N$ and define $P_{c}\in\cP(X)$ by 
		$d P_c = \alpha^{-1} e^{-c}\, dP$, where~$\alpha$ is the normalizing constant.
		Then
		\begin{equation}
		\label{eq:reformul}
		\cF(\pi) = D_{\rm KL}(\pi, P_c) - \log \alpha,
		\end{equation}
		so that the entropic optimal transport problem~\eqref{eq:defSent} is equivalent to minimizing $D_{\rm KL}(\cdot, P_{c})$. In particular,  $\pi^{*}= \argmin_{\Pi(\mu_1, \dots, \mu_N)} D_{\rm KL}(\cdot, P_{c})$ and the Pythagorean theorem for relative entropy \cite[Theorem~2.2]{Csiszar.75} yields
		\[
		D_{\rm KL}(\pi, P_c) \geq D_{\rm KL}(\pi^*, P_c) + D_{\rm KL}(\pi, \pi^*)  \qforallq \pi \in \Pi(\mu_1, \dots, \mu_N).
		\]
		In view of~\eqref{eq:reformul}, the claim follows.	(In the case under consideration, the assertion holds even with equality. We do not need that fact here.)
\end{proof}

We can now show the stability of optimizers with respect to the marginals.

\begin{proof}[Proof of Theorem \ref{th:opti}]
  We detail the proof for \eqref{eq:transportineq}; the argument for~\eqref{eq:transportineq2} is identical. For notational convenience, we treat the case where $\tilde{\mu}_i$ (rather than $\mu_{i}$) satisfy~\eqref{eq:transportineq}. Consider the optimizers $\pi^{*}\in\Pi(\mu_1, \dots, \mu_N)$ and $\tilde{\pi}^{*} \in \Pi(\tilde{\mu}_1, \dots, \tilde{\mu}_N)$. 
  Let $\tilde{\pi} \in \Pi(\tilde{\mu}_1, \dots, \tilde{\mu}_N)$ be the shadow of $\pi^{*}$ for the $p$-Wasserstein distance. Using Lemma~\ref{le:shadow} and~\eqref{eq:ccond} as in the proof of Theorem~\ref{th:valueConv}\,(ii),
  \[
	\cF(\tilde{\pi}) - \cF(\pi^*)  \leq \int c\,d(\tilde{\pi}-\pi^*) \leq L \, W_p(\tilde{\pi}, \pi^*) \leq L \Delta.
	\]
	We also have $\cF(\pi^*) - \cF(\tilde{\pi}^{*}) \leq L \Delta$ by Theorem~\ref{th:valueConv}\,(ii), and adding the inequalities yields
  \[
    \cF(\tilde\pi) - \cF(\tilde{\pi}^*) \leq 2L \Delta.
  \]
  (If both marginals satisfy~\eqref{eq:transportineq} with constant~$L$, we can assume by symmetry that $\cF(\pi^*) - \cF(\tilde{\pi}^{*}) \leq 0$, and obtain the estimate with $L$ instead of $2L$.)
  By Lemma~\ref{lem:Iprojection}, it follows that
	$
	D_{\rm KL}(\tilde{\pi}, \tilde{\pi}^*) \leq 2L \Delta,
	$
	and now~\eqref{eq:transportineq} implies
	\[
	W_q(\tilde{\pi}, \tilde{\pi}^*) \leq C_q (2L \Delta)^\frac{1}{2q}.
	\]
	Recalling that~$W_{r}$ on~$X$ was defined relative to the distance $d_{X, r}$, Jensen's inequality implies $W_q(\cdot,\cdot) \leq N^{(\frac{1}{q} - \frac{1}{p})} W_p(\cdot,\cdot)$. In view of Lemma~\ref{le:shadow}, we deduce $W_q(\pi^*, \tilde{\pi}) \leq N^{(\frac{1}{q} - \frac{1}{p})} W_p(\pi^*, \tilde{\pi}) \leq N^{(\frac{1}{q} - \frac{1}{p})} \Delta$. We conclude the proof via the triangle inequality,
	\[
	W_q(\pi^*, \tilde{\pi}^*) \leq W_q(\pi^*, \tilde{\pi}) + W_q(\tilde{\pi}, \tilde{\pi}^*) \leq N^{(\frac{1}{q} - \frac{1}{p})} \Delta + C_q \left(2L \Delta\right)^{\frac{1}{2q}}.\qedhere
	\]
\end{proof}

\subsection{Stability with respect to Cost}

Throughout this section, we fix $p \in [1, \infty]$, $\mu_i \in \mathcal{P}_p(X_i)$ for $i=1, \dots, N$ and $c, \tilde{c}: X \rightarrow [0,\infty)$ satisfying the growth condition~\eqref{eq:growthcond}.
The following observation is the starting point for the stability with respect to the cost function. We recall that $P := \mu_1 \otimes \dots \otimes \mu_N$.

\begin{lemma}
	\label{le:coolinequality}
	Let $\pi^*, \tilde{\pi}^*$ be the respective optimizers of $S_{\rm ent}(\mu_1, \dots, \mu_N, c)$ and $S_{\rm ent}(\mu_1, \dots, \mu_N, \tilde{c})$. Then
	  \[
		D_{\rm KL}(\pi^*, \tilde{\pi}^*) + D_{\rm KL}(\tilde{\pi}^*, \pi^*) \leq \int (c - \tilde{c}) \, d(\tilde{\pi}^* - \pi^*).
		\]
	\begin{proof}
		Lemma~\ref{lem:Iprojection} yields
		\begin{align*}
		D_{\rm KL}(\pi^*, \tilde{\pi}^*) + D_{\rm KL}(\tilde{\pi}^*, \pi^*) 
		&\leq \int c \,d\tilde{\pi}^* + D_{\rm KL}(\tilde{\pi}^*, P) + \int \tilde{c} \,d\pi^* + D_{\rm KL}(\pi^*, P)\\
		&- \int c \,d\pi^* - D_{\rm KL}(\pi^*, P) - \int \tilde{c} \,d\tilde{\pi}^* - D_{\rm KL}(\tilde{\pi}^*, P)\\
		&= \int (c-\tilde{c}) \,d(\tilde{\pi}^* - \pi^*).
		\qedhere
		\end{align*}
	\end{proof}
\end{lemma}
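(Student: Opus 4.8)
The plan is to apply the Pythagorean inequality of Lemma~\ref{lem:Iprojection} twice, once for each of the two entropic problems, each time using the optimizer of the \emph{other} problem as a competitor. This is legitimate precisely because $S_{\rm ent}(\mu_1, \dots, \mu_N, c)$ and $S_{\rm ent}(\mu_1, \dots, \mu_N, \tilde{c})$ share the same marginals, so both $\pi^*$ and $\tilde{\pi}^*$ lie in the common feasible set $\Pi(\mu_1, \dots, \mu_N)$ and may be used as test couplings in either problem.

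Concretely, write $\cF_c(\pi) := \int c\,d\pi + D_{\rm KL}(\pi, P)$ and likewise $\cF_{\tilde c}$, with $P = \mu_1 \otimes \dots \otimes \mu_N$. First I would apply Lemma~\ref{lem:Iprojection} to the problem with cost~$c$, whose optimizer is~$\pi^*$, with the competitor~$\tilde{\pi}^*$, obtaining $D_{\rm KL}(\tilde{\pi}^*, \pi^*) \leq \cF_c(\tilde{\pi}^*) - \cF_c(\pi^*)$. Then I would apply it to the problem with cost~$\tilde c$, whose optimizer is~$\tilde{\pi}^*$, with the competitor~$\pi^*$, obtaining $D_{\rm KL}(\pi^*, \tilde{\pi}^*) \leq \cF_{\tilde c}(\pi^*) - \cF_{\tilde c}(\tilde{\pi}^*)$. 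Adding the two inequalities, the four relative-entropy-to-$P$ terms appear in two cancelling pairs and drop out, leaving only the linear cost contributions $\int c\,d\tilde{\pi}^* - \int c\,d\pi^* - \int \tilde c\,d\tilde{\pi}^* + \int \tilde c\,d\pi^*$, which regroup as $\int (c - \tilde c)\,d(\tilde{\pi}^* - \pi^*)$; this is the asserted bound.

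There is essentially no obstacle here beyond bookkeeping. The one point requiring a moment's care is verifying that Lemma~\ref{lem:Iprojection} genuinely applies with the cross competitors, which rests on the shared-marginals observation above, and that all the integrals are finite, which is guaranteed by the growth condition~\eqref{eq:growthcond} on $c$ and~$\tilde c$ together with $\mu_i \in \mathcal{P}_p(X_i)$, forcing $c, \tilde c \in L^1(\pi)$ for every $\pi \in \Pi(\mu_1, \dots, \mu_N)$. The resulting two-sided bound on $D_{\rm KL}(\pi^*, \tilde{\pi}^*) + D_{\rm KL}(\tilde{\pi}^*, \pi^*)$ is exactly the strong-convexity estimate to be combined subsequently with H\"older's inequality and a transport inequality in the proof of Proposition~\ref{pr:costStability}.
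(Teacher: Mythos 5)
Your argument is exactly the paper's: apply Lemma~\ref{lem:Iprojection} twice with the cross competitors (legitimate since both problems share the feasible set $\Pi(\mu_1,\dots,\mu_N)$), add the two inequalities, and let the $D_{\rm KL}(\cdot,P)$ terms cancel to leave $\int(c-\tilde c)\,d(\tilde\pi^*-\pi^*)$. This is correct and essentially identical to the paper's proof.
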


Lemma~\ref{le:coolinequality} clearly implies a Lipschitz estimate with respect to $\|c - \tilde{c}\|_{\infty}$ by using Pinsker's inequality on the left-hand side. The following proof is a variation on that observation.

\begin{proof}[Proof of Proposition~\ref{pr:costStability}]
  Combining 
	\[
		\int (\tilde{c} - c) \,d(\pi^* - \tilde{\pi}^*) \leq \int |\tilde{c} - c| \left| \frac{d\pi^*}{dP} - \frac{d\tilde\pi^*}{dP}\right| dP
		\]
		with H\"{o}lder's inequality as well as (in case $p \neq 1$), for $q := \frac{p}{p-1}$,
		\[
		\left| \frac{d\pi^*}{dP} - \frac{d\tilde\pi^*}{dP}\right|^q \leq \left\| \frac{d\pi^*}{dP} - \frac{d\tilde\pi^*}{dP}\right\|_{L^\infty(P)}^{q-1} \left| \frac{d\pi^*}{dP} - \frac{d\tilde\pi^*}{dP}\right| ,
		\]
		yields 
  \begin{equation}\label{eq:costStability1}
  \int (\tilde{c} - c) \,d(\pi^* - \tilde{\pi}^*) \leq  \|\tilde{c}-c\|_{L^p(P)} (2\|\pi^* - \tilde{\pi}^*\|_{TV})^{1-\frac{1}{p}} \left\|\frac{d\pi^*}{dP} - \frac{d\tilde{\pi}^*}{dP}\right\|_{\infty}^\frac{1}{p}.
	\end{equation}
  Next, we show 
  \begin{equation}\label{eq:costStability2}
	\left\|\frac{d\pi^*}{dP} - \frac{d\tilde{\pi}^*}{dP}\right\|_{\infty} \leq a := \exp(N\|c\|_\infty) +\exp(N\|\tilde{c}\|_\infty).
  \end{equation}
  Recall that by duality (e.g., \cite{DiMarinoGerolin.20,Nutz.20}), for certain ``potentials'' $\varphi_i : X_i \rightarrow \mathbb{R}$,
		\begin{equation}\label{eq:duality1}
		\frac{d\pi^*}{dP}(x) = \exp\left(-c + \oplus_{i}\varphi_{i}\right)
  \end{equation}
		where $(\oplus_{i} \varphi_i)(x) := \sum_{i=1}^{N} \varphi(x_{i})$, and moreover
		\begin{equation}\label{eq:duality2}
		  \int \oplus_{i} \varphi_i \,dP = S_{\rm ent}(\mu_1, \dots, \mu_N, c) \geq 0
		\end{equation}
		where the inequality is due to $c\geq0$. To estimate the right-hand side of~\eqref{eq:duality1}, recall that~\eqref{eq:duality1} and the fact that $\pi^{*}$ is a coupling imply a conjugacy relation between the potentials (e.g., \cite{DiMarinoGerolin.20,Nutz.20,NutzWiesel.21}), namely
		\begin{align*}
		\varphi_i(x_i) &= - \log \int \exp (-c(x) + \oplus_{j \neq i} \varphi_j(x_{j}) ) \,P_{-i}(d x_{-i}) \\
		&\leq \|c\|_{\infty} - \int \oplus_{j \neq i} \varphi_j \,dP_{-i},
		\end{align*}		
		where $x_{-i}:=(x_{1},\dots,x_{i-1},x_{i+1},\dots,x_{N})$ and 
		$P_{-i}:=\otimes_{j\neq i} \mu_{j}$. Thus by~\eqref{eq:duality2}, 
		\[
		\oplus_{i} \varphi_i (x) \leq N \|c\|_\infty - (N-1) \int \oplus_{j = 1}^N \varphi_j \,dP \leq N \|c\|_\infty.
		\]
		Using this in~\eqref{eq:duality1}, we conclude that
		\[
		  \left\|\frac{d\pi^*}{dP}\right\|_{\infty} \leq \exp (N \|c\|_\infty).
		\]
		The analogue holds for $\tilde{\pi}^*$, hence $\left\|\frac{d\pi^*}{dP} - \frac{d\tilde{\pi}^*}{dP}\right\|_{\infty} \leq \left\|\frac{d\pi^*}{dP}\right\|_{\infty} + \left\|\frac{d\tilde{\pi}^*}{dP}\right\|_{\infty} \leq a$ as claimed in~\eqref{eq:costStability2}.

	  Pinsker's inequality, Lemma~\ref{le:coolinequality}, \eqref{eq:costStability1} and~\eqref{eq:costStability2} imply
		\begin{align*}
		4 \|\pi^* - \tilde{\pi}^*\|^2_{TV} &\leq D_{\rm KL}(\pi^*, \tilde{\pi}^*) + D_{\rm KL}(\tilde{\pi}^*, \pi^*)\\ &\leq \int (c - \tilde{c}) \, d(\tilde{\pi}^* - \pi^*) 
		\leq  a^{\frac{1}{p}} (2\|\pi^* - \tilde{\pi}^*\|_{TV})^{1-\frac{1}{p}} \|\tilde{c} - c\|_{L^p(P)}.
		\end{align*}
		Dividing by $4 \|\pi^* - \tilde{\pi}^*\|_{TV}^{1-\frac{1}{p}}$ yields
	\begin{equation}\label{eq:costStability3}
	  \|\pi^* - \tilde{\pi}^*\|_{TV}^{1+\frac{1}{p}} \leq \Big(\frac{1}{2}\Big)^{1+\frac{1}{p}} a^{\frac{1}{p}} \|\tilde{c}-c\|_{L^p(P)}
	\end{equation}
	which is the first claim of the proposition. On the other hand, using Lemma~\ref{le:coolinequality} and \eqref{eq:costStability1} together with \eqref{eq:costStability3} yields  
	\begin{equation}\label{eq:duality4}	  
	D_{\rm KL}(\pi^*, \tilde{\pi}^*) + D_{\rm KL}(\tilde{\pi}^*, \pi^*) \leq a^{\frac{1}{p}} \|\tilde{c}-c\|_{L^p(P)} \left(a^{\frac{1}{p}} \|\tilde{c}-c\|_{L^p(P)}\right)^{\frac{p-1}{p+1}}.
  \end{equation}
  As \eqref{eq:transportineq} implies
	$
	2 C_q^{-2q}W_q(\pi^*, \tilde{\pi}^*)^{2q} \leq D_{\rm KL}(\pi^*, \tilde{\pi}^*) + D_{\rm KL}(\tilde{\pi}^*, \pi^*),
	$
	this proves the second claim of the proposition. For the last claim, we drop the nonnegative term $D_{\rm KL}(\tilde\pi^*, \pi^*)$ on the left-hand side of~\eqref{eq:duality4} and use~\eqref{eq:transportineq2} with the remaining inequality.
\end{proof}

\subsection{Stability through Transformation}\label{se:proofsStabilityThroughTransform}

Let $p \in [1, \infty]$, $\mu_i,\tilde\mu_{i} \in \mathcal{P}_p(X_i)$ for $i=1, \dots, N$ and let $c: X \rightarrow [0,\infty)$ satisfy the growth condition~\eqref{eq:growthcond}.
We begin with preliminary results connecting stability with respect to the marginals and stability with respect to the cost function. As in Definition~\ref{de:shadow}, $K$~denotes the kernel $K(x) = K_1(x_1) \otimes \dots \otimes K_N(x_N)$, where $\mu_i \otimes K_i \in \Pi(\mu_i, \tilde{\mu}_i)$ is an optimal coupling attaining $W_p(\mu_i, \tilde{\mu}_i)$. We use the notation $Kc(x) := \int c(y) \,K(x, dy)$ for the integral of~$c$ with respect to the kernel.

\begin{lemma}
	\label{lem:costislp}
	Let $p\in [1,\infty]$ and let $c$ be $\Lip_p(c)$-Lipschitz. Then
	\begin{align*}
	\|c - Kc \|_{L^p(\pi)} \leq \Lip_p(c) W_{p}(\mu_1, \dots, \mu_N;\tilde\mu_1, \dots, \tilde\mu_N), \quad \pi \in \Pi(\mu_1, \dots, \mu_N).
	\end{align*}
\end{lemma}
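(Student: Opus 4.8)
The plan is to work directly from the definitions and push the pointwise Lipschitz bound through the kernel integral, then integrate against $\pi$. First I would fix $\pi \in \Pi(\mu_1, \dots, \mu_N)$ and recall that $Kc(x) = \int c(y)\,K(x,dy)$ where $K(x) = K_1(x_1)\otimes\cdots\otimes K_N(x_N)$, so for each $x$,
\[
  |c(x) - Kc(x)| = \left| \int \bigl(c(x) - c(y)\bigr)\,K(x,dy)\right| \le \int |c(x)-c(y)|\,K(x,dy) \le \Lip_p(c) \int d_{X,p}(x,y)\,K(x,dy).
\]
For $p \in [1,\infty)$, applying Jensen's inequality (to the convex function $t \mapsto t^p$) inside the kernel integral gives $|c(x)-Kc(x)|^p \le \Lip_p(c)^p \int d_{X,p}(x,y)^p\,K(x,dy)$, and then integrating against $\pi$ and using the product structure of $K$ together with the fact that $\pi$ has marginals $\mu_i$ yields
\[
  \|c-Kc\|_{L^p(\pi)}^p \le \Lip_p(c)^p \int \sum_{i=1}^N d_{X_i}(x_i,y_i)^p \,(\pi\otimes K)(dx,dy) = \Lip_p(c)^p \sum_{i=1}^N \int d_{X_i}(x_i,y_i)^p\,(\mu_i\otimes K_i)(dx_i,dy_i).
\]
Since $\mu_i \otimes K_i$ attains $W_p(\mu_i,\tilde\mu_i)$, each summand equals $W_p(\mu_i,\tilde\mu_i)^p$, so the right-hand side is $\Lip_p(c)^p \sum_i W_p(\mu_i,\tilde\mu_i)^p = \bigl(\Lip_p(c)\, W_p(\mu_1,\dots,\mu_N;\tilde\mu_1,\dots,\tilde\mu_N)\bigr)^p$, which is the claim.

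For $p = \infty$ the argument is the analogue with essential suprema: $d_{X,\infty}(x,y) = \max_i d_{X_i}(x_i,y_i)$, and $K_i$ is supported $\mu_i$-a.s.\ on pairs with $d_{X_i}(x_i,y_i) \le W_\infty(\mu_i,\tilde\mu_i)$, so $K$-a.s.\ (hence $\pi\otimes K$-a.s.) one has $d_{X,\infty}(x,y) \le \max_i W_\infty(\mu_i,\tilde\mu_i) = W_\infty(\mu_1,\dots,\mu_N;\tilde\mu_1,\dots,\tilde\mu_N)$, giving $\|c-Kc\|_{L^\infty(\pi)} \le \Lip_\infty(c)\,W_\infty(\mu_1,\dots,\mu_N;\tilde\mu_1,\dots,\tilde\mu_N)$.

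I do not expect a genuine obstacle here; the only points requiring a little care are the measurability of $x\mapsto Kc(x)$ (standard, since $K$ is a stochastic kernel and $c$ is continuous with growth of order $p$, hence $K$-integrable by the moment assumptions on $\tilde\mu_i$) and the bookkeeping that lets one pass from $\int d_{X,p}(x,y)^p\,(\pi\otimes K)(dx,dy)$ to the sum of the individual optimal transport costs $W_p(\mu_i,\tilde\mu_i)^p$—this is exactly the same computation already carried out in the proof of Lemma~\ref{le:shadow} and can be invoked in the same way.
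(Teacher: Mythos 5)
Your proof is correct and follows essentially the same route as the paper: bound $|c(x)-Kc(x)|$ by the kernel average of $|c(x)-c(y)|$, apply the Lipschitz bound and Jensen's inequality to reduce to $\iint \sum_i d_{X_i}(x_i,y_i)^p\,K(x,dy)\,\pi(dx)$, and identify each term with $W_p(\mu_i,\tilde\mu_i)^p$ via the optimality of $\mu_i\otimes K_i$. The only difference is cosmetic (the order of Jensen and the Lipschitz estimate), and your explicit $p=\infty$ argument correctly supplies the case the paper leaves to the reader.
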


\begin{proof}
  We only detail the calculation for $p<\infty$,
  \begin{align*}
    \|c - Kc \|_{L^p(\pi)}^p &= \int \Big|c(x) - \int c(y) K(x, dy)\Big|^p \pi(dx) \\
		&\leq \iint |c(x) - c(y)|^p K(x, dy) \pi(dx) \\
		&\leq \Lip_p(c)^p \iint \sum_{i=1}^N d_{X, i}(x_i, y_i)^p K(x, dy) \pi(dx) \\
		&= \Lip_p(c)^p \sum_{i=1}^N W_p(\mu_i, \tilde{\mu}_i)^p. \qedhere
	\end{align*}
\end{proof}

Next, consider the kernel $\tilde K$ defined like $K$ but with the marginals reversed; that is, $\tilde K(x) = \tilde K_1(x_1) \otimes \dots \otimes \tilde K_N(x_N)$, where $\tilde\mu_i \otimes \tilde K_i \in \Pi(\tilde\mu_i, \mu_i)$ is an optimal coupling attaining $W_p(\tilde{\mu}_i,\mu_i)$.  The double integral $\tilde{K}Kc :=\tilde{K} (Kc) $ thus corresponds to a round-trip between the marginals. In general, this round-trip leads to a positive gap~$R$ in value, as shown in the next result. The result will not be used in the subsequent proofs but it may be useful to understand the steps below, where we look for situations where the gap is zero.

\begin{lemma}
	\label{lem:prepcost}
	Let $p\in [1,\infty]$. We have
	\[
	S(\tilde{\mu}_1, \dots, \tilde{\mu}_N, c) \leq S(\mu_1, \dots, \mu_N, Kc) \leq S(\tilde{\mu}_1, \dots, \tilde{\mu}_N, c) + R,
	\]
	where
	$R:= \int (\tilde{K}Kc  - c) \,d\tilde{\pi}^*$ and $\tilde{\pi}^*$ is the optimizer of $S(\tilde{\mu}_1, \dots, \tilde{\mu}_N, c)$. Moreover, $R\leq 2 \Lip_p(c) W_{p}(\mu_1, \dots, \mu_N;\tilde\mu_1, \dots, \tilde\mu_N)$.
\end{lemma}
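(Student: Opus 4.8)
The plan is to exploit that composing with a Markov kernel never increases $f$-divergence relative to the product of marginals, so that the ``pushed'' couplings one obtains by applying $K$ or $\tilde K$ remain feasible competitors with controlled entropy. Concretely, I would fix the optimizer $\tilde\pi^*$ of $S(\tilde\mu_1,\dots,\tilde\mu_N,c)$ and the optimizer $\pi^*_{Kc}$ of $S(\mu_1,\dots,\mu_N,Kc)$, and build explicit competitors in each direction using the shadow construction of Definition~\ref{de:shadow}.

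First I would prove the left inequality $S(\tilde{\mu}_1, \dots, \tilde{\mu}_N, c) \leq S(\mu_1, \dots, \mu_N, Kc)$. Take $\pi\in\Pi(\mu_1,\dots,\mu_N)$ and let $\pi K = \pi\otimes K$ projected onto the second coordinate; this is exactly the shadow of $\pi$ onto $\Pi(\tilde\mu_1,\dots,\tilde\mu_N)$. By the definition of $Kc$, one has $\int c\,d(\pi K) = \int Kc\,d\pi$ (integrate out the kernel), and by Lemma~\ref{le:shadow} (equivalently the data processing inequality, Lemma~\ref{le:data}) $D_f(\pi K,\tilde\mu_1\otimes\cdots\otimes\tilde\mu_N)\le D_f(\pi,\mu_1\otimes\cdots\otimes\mu_N)$. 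Hence $S(\tilde\mu_1,\dots,\tilde\mu_N,c)\le \int c\,d(\pi K)+D_f(\pi K,\cdots)\le \int Kc\,d\pi+D_f(\pi,\cdots)$; taking the infimum over $\pi$ gives the bound.

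For the right inequality $S(\mu_1, \dots, \mu_N, Kc) \leq S(\tilde{\mu}_1, \dots, \tilde{\mu}_N, c) + R$, I would run the symmetric argument using the reversed kernel $\tilde K$. Let $\tilde\pi^*$ be the optimizer of $S(\tilde\mu_1,\dots,\tilde\mu_N,c)$ and form its shadow $\tilde\pi^*\tilde K\in\Pi(\mu_1,\dots,\mu_N)$ (a feasible competitor for $S(\mu_1,\dots,\mu_N,Kc)$). Then $S(\mu_1,\dots,\mu_N,Kc)\le \int Kc\,d(\tilde\pi^*\tilde K)+D_f(\tilde\pi^*\tilde K,\mu_1\otimes\cdots\otimes\mu_N)$. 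The divergence term is $\le D_f(\tilde\pi^*,\tilde\mu_1\otimes\cdots\otimes\tilde\mu_N)$ again by data processing, and the cost term equals $\int \tilde K(Kc)\,d\tilde\pi^* = \int \tilde K K c\,d\tilde\pi^*$. Writing $\int \tilde K K c\,d\tilde\pi^* = \int c\,d\tilde\pi^* + \int(\tilde K K c - c)\,d\tilde\pi^* = \int c\,d\tilde\pi^* + R$ and adding $D_f(\tilde\pi^*,\cdots)$ gives exactly $S(\tilde\mu_1,\dots,\tilde\mu_N,c)+R$. Finally, for the bound on $R$: write $\tilde K K c - c = (\tilde K K c - Kc) + (Kc - c)$ (slight care: $Kc$ here is a function on $X$, and we mean $\tilde K$ applied to it), and apply Lemma~\ref{lem:costislp} twice — once to $c$ with kernel $K$, once to $Kc$ which is $\Lip_p(c)$-Lipschitz as well (being an average of translates of a Lipschitz function, hence no worse), integrating against $\tilde\pi^*$ and using $\|\cdot\|_{L^1(\tilde\pi^*)}\le\|\cdot\|_{L^p(\tilde\pi^*)}$ — to get $R\le 2\Lip_p(c)\,W_{p}(\mu_1, \dots, \mu_N;\tilde\mu_1, \dots, \tilde\mu_N)$.

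The only genuinely delicate point is the Lipschitz constant of $Kc$: one must check that $x\mapsto \int c(y)\,K(x,dy)$ is still $\Lip_p(c)$-Lipschitz, which follows because $K(x) = K_1(x_1)\otimes\cdots\otimes K_N(x_N)$ is a product kernel and $c$ is Lipschitz in the product metric $d_{X,p}$, so one can couple $K(x)$ and $K(x')$ coordinatewise through any couplings of $K_i(x_i)$ and $K_i(x_i')$ and estimate — though to be safe I would phrase the $R$-bound directly via Lemma~\ref{lem:costislp} applied to $c$ with the shadow of $\tilde\pi^*$ rather than re-invoking Lipschitzianity of $Kc$, keeping the argument self-contained. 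Everything else is bookkeeping with the data processing inequality and the defining identity $\int c\,d(\pi K)=\int Kc\,d\pi$.
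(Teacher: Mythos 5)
Your treatment of the two inequalities is exactly the paper's argument: for any $\pi\in\Pi(\mu_1,\dots,\mu_N)$ the shadow $\pi K$ is feasible for $S(\tilde\mu_1,\dots,\tilde\mu_N,c)$ with $\int c\,d(\pi K)=\int Kc\,d\pi$ and $D_f(\pi K,\tilde\mu_1\otimes\cdots\otimes\tilde\mu_N)\le D_f(\pi,\mu_1\otimes\cdots\otimes\mu_N)$ by Lemma~\ref{le:data}, and symmetrically $\tilde\pi^*\tilde K$ is the competitor for $S(\mu_1,\dots,\mu_N,Kc)$ whose cost term is $\int \tilde K Kc\,d\tilde\pi^*=\int c\,d\tilde\pi^*+R$; that part is correct and complete.

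The bound $R\le 2\Lip_p(c)W_p(\mu_1,\dots,\mu_N;\tilde\mu_1,\dots,\tilde\mu_N)$ is where your committed argument breaks. First, $Kc$ is in general \emph{not} $\Lip_p(c)$-Lipschitz: $K_i$ is a disintegration of a $W_p$-optimal coupling, not a convolution-type kernel, and it may be discontinuous. For instance, with $\mu_1$ uniform on $[0,1]$, $\tilde\mu_1=\frac12(\delta_0+\delta_1)$ and $\mu_2=\tilde\mu_2=\delta_0$, the optimal kernel is $K_1(x_1)=\delta_{\1_{[1/2,1]}(x_1)}$, so for $c(y)=y_1$ the function $Kc$ has a jump at $x_1=1/2$. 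Second, the term $\int(Kc-c)\,d\tilde\pi^*$ in your primary decomposition is outside the scope of Lemma~\ref{lem:costislp}: that lemma requires the integrating measure to lie in $\Pi(\mu_1,\dots,\mu_N)$, whereas $\tilde\pi^*$ has marginals $\tilde\mu_i$; worse, $Kc$ is only determined $\mu_1\otimes\cdots\otimes\mu_N$-a.e., so this integral is version-dependent when $\tilde\mu_i\not\ll\mu_i$. Your fallback (apply Lemma~\ref{lem:costislp} to $c$ at the shadow of $\tilde\pi^*$) is the right move and is what the paper means by ``similar to the proof of Lemma~\ref{lem:costislp}'', but as stated it covers only one of the two contributions. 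The clean version is
\[
R=\int (Kc-c)\,d(\tilde\pi^*\tilde K)+\int (\tilde K c-c)\,d\tilde\pi^*,
\]
where the first term is at most $\Lip_p(c)\,W_p(\mu_1,\dots,\mu_N;\tilde\mu_1,\dots,\tilde\mu_N)$ by Lemma~\ref{lem:costislp} (legitimate since $\tilde\pi^*\tilde K\in\Pi(\mu_1,\dots,\mu_N)$, together with $\|\cdot\|_{L^1}\le\|\cdot\|_{L^p}$), and the second is bounded by the same quantity by the mirror image of Lemma~\ref{lem:costislp} with kernel $\tilde K$ and marginals reversed (equivalently, by $|\int c\,d(\tilde\pi^*\tilde K)-\int c\,d\tilde\pi^*|\le\Lip_p(c)\,W_p(\tilde\pi^*\tilde K,\tilde\pi^*)$ and Lemma~\ref{le:shadow}). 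With that replacement your proof is complete and coincides with the paper's.
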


\begin{proof}
		Set $\tilde{P}=\tilde{\mu}_1\otimes \cdots\otimes \tilde{\mu}_N$ and recall~\eqref{eq:2ndMarginalNotation}. Using Lemma~\ref{le:data} twice,
		\begin{align*}
		S(\tilde{\mu}_1, \dots, \tilde{\mu}_N, c) &= \inf_{\tilde{\pi} \in \Pi(\tilde{\mu}_1, \dots, \tilde{\mu}_N)} \int c \,d\tilde{\pi} + D_f(\tilde{\pi}, \tilde{P}) \\
		&\leq \inf_{\pi \in \Pi(\mu_1, \dots, \mu_N)} \int c \,d(\pi K) + D_f(\pi K, P K) \\
		&\leq \inf_{\pi \in \Pi(\mu_1, \dots, \mu_N)} \int Kc \,d\pi + D_f(\pi, P) \\
		&= S(\mu_1, \dots, \mu_N, Kc) \\
		&\leq \int Kc \,d(\tilde{\pi}^* \tilde{K}) + D_f(\tilde{\pi}^* \tilde{K}, \tilde{P} \tilde{K}) \\
		&\leq \int \tilde{K}Kc \, d\tilde{\pi}^* + D_f(\tilde{\pi}^*, \tilde{P}) 
		= S(\tilde{\mu}_1, \dots, \tilde{\mu}_N, c) + R.
		\end{align*}
		The bound for $R$ is similar to the proof of Lemma~\ref{lem:costislp}.
\end{proof}

In Lemma~\ref{lem:prepcost}, there is a gap between the values of $S(\tilde{\mu}_1, \dots, \tilde{\mu}_N, c)$ and $S(\mu_1, \dots, \mu_N, Kc)$. If however the kernels $K,\tilde K$ are given by maps inverse to one another (as will be the case in the proof of Lemma~\ref{lem:wloginvertible} below), the gap is zero and the problems $S(\tilde{\mu}_1, \dots, \tilde{\mu}_N, c)$ and $S(\mu_1, \dots, \mu_N, Kc)$ become equivalent in the following sense.
We write $T_{\sharp}$ for the pushforward under~$T$.

\begin{lemma}
	\label{lem:invertibleequiv}	
	For $i=1, \dots, N$, let $T_i : X_i \rightarrow X_i$ satisfy $\tilde{\mu}_i = ({T_i})_\sharp \mu_i$ and admit a (measurable) a.s.~inverse $T_{i}^{-1}: X_i \rightarrow X_i$; that is, $T_{i}^{-1}\circ T_i= \id$ $\mu_{i}$-a.s.\ and $T_{i}\circ T_{i}^{-1}= \id$ $\tilde{\mu}_{i}$-a.s. Define
	\begin{equation*}
	T(x) = (T_1(x_1), \dots, T_N(x_N)), \quad T^{-1}(x) = (T_{1}^{-1}(x_1), \dots, T_{N}^{-1}(x_N)).
	\end{equation*}
	Then $S(\tilde{\mu}_1, \dots, \tilde{\mu}_N, c)=S(\mu_1, \dots, \mu_N, c\circ T)$ and the optimizers $\tilde{\pi}^{*}$, $\pi^{*}$ of the two problems are related by
	$\tilde{\pi}^{*} = T_\sharp \pi^{*}$ and $\pi^{*} = T^{-1}_\sharp \tilde{\pi}^{*}$.
\end{lemma}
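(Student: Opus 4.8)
The plan is to exploit the fact that the regularization term $D_f(\pi,\pi_1\otimes\dots\otimes\pi_N)$ depends only on the \emph{relative density} of $\pi$ with respect to its product marginals, hence is invariant under pushforward by an invertible product map, while the linear cost term transforms by composition with $T$. First I would establish the pushforward bijection: since each $T_i$ is an $\mu_i$-a.s.\ invertible map with $\tilde\mu_i = (T_i)_\sharp\mu_i$, the product map $T$ is $P$-a.s.\ invertible with $\tilde P := \tilde\mu_1\otimes\dots\otimes\tilde\mu_N = T_\sharp P$, and $\pi\mapsto T_\sharp\pi$ is a bijection from $\Pi(\mu_1,\dots,\mu_N)$ onto $\Pi(\tilde\mu_1,\dots,\tilde\mu_N)$ with inverse $\tilde\pi\mapsto T^{-1}_\sharp\tilde\pi$ (this uses that $T$ acts coordinatewise, so it maps the $i$-th marginal of $\pi$ to $(T_i)_\sharp$ of it).

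The key identity is that for any $\pi\in\Pi(\mu_1,\dots,\mu_N)$ with $\pi\ll P$, writing $\rho = d\pi/dP$, the pushforward $T_\sharp\pi$ satisfies $d(T_\sharp\pi)/d\tilde P = \rho\circ T^{-1}$ $\tilde P$-a.s. Indeed, for a test function $h$, $\int h \, d(T_\sharp\pi) = \int h\circ T \, d\pi = \int (h\circ T)\,\rho\, dP = \int h\,(\rho\circ T^{-1})\,d(T_\sharp P) = \int h\,(\rho\circ T^{-1})\,d\tilde P$. Consequently
\[
  D_f(T_\sharp\pi, \tilde P) = \int f\!\left(\frac{d(T_\sharp\pi)}{d\tilde P}\right)d\tilde P = \int f(\rho\circ T^{-1})\, d\tilde P = \int f(\rho)\, dP = D_f(\pi, P),
\]
and if $\pi\not\ll P$ then $T_\sharp\pi\not\ll\tilde P$ so both divergences are $+\infty$. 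For the cost term, $\int c\, d(T_\sharp\pi) = \int c\circ T\, d\pi$ directly from the definition of pushforward. Adding the two identities gives, for every $\pi\in\Pi(\mu_1,\dots,\mu_N)$,
\[
  \int c\, d(T_\sharp\pi) + D_f(T_\sharp\pi, \tilde P) = \int (c\circ T)\, d\pi + D_f(\pi, P).
\]
Taking the infimum over $\pi\in\Pi(\mu_1,\dots,\mu_N)$ on the right and using that $\pi\mapsto T_\sharp\pi$ ranges over all of $\Pi(\tilde\mu_1,\dots,\tilde\mu_N)$ yields $S(\mu_1,\dots,\mu_N, c\circ T) = S(\tilde\mu_1,\dots,\tilde\mu_N, c)$. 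Since the value is attained at the unique optimizer on each side and the bijection preserves the objective, the optimizers must correspond: $\tilde\pi^* = T_\sharp\pi^*$, and applying $T^{-1}_\sharp$ gives $\pi^* = T^{-1}_\sharp\tilde\pi^*$.

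I expect the main (though still routine) obstacle to be the careful bookkeeping of the a.s.\ inverse: one must check that $T^{-1}\circ T = \id$ holds $P$-a.s.\ and $T\circ T^{-1}=\id$ holds $\tilde P$-a.s.\ (which follows coordinatewise from the hypotheses on $T_i$), and that this suffices for the pushforward density formula $d(T_\sharp\pi)/d\tilde P = \rho\circ T^{-1}$ to be valid $\tilde P$-a.s.\ even though $T^{-1}$ is only defined off a null set. One should also verify that $c\circ T$ still satisfies a growth condition of the relevant order so that the problem $S(\mu_1,\dots,\mu_N,c\circ T)$ is well-posed, which follows from the growth of $c$ together with the finiteness of the $p$-th moments of $\mu_i$ transported along $T$; but since $T_i$ need not be Lipschitz in general, in the application this will be ensured by the boundedness of $c$ (so that $c\circ T$ is bounded as well). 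Modulo these measure-theoretic checks, the argument is a direct computation.
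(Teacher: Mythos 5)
Your proposal is correct and follows essentially the same route as the paper: both arguments rest on the change of variables under the a.s.-invertible product map, identifying the objective of $\pi$ for the cost $c\circ T$ with the objective of $T_\sharp\pi$ for $c$, and then passing to the infimum over the bijection $\Pi(\mu_1,\dots,\mu_N)\ni\pi\mapsto T_\sharp\pi\in\Pi(\tilde\mu_1,\dots,\tilde\mu_N)$. The only cosmetic difference is that you verify $D_f(T_\sharp\pi,\tilde P)=D_f(\pi,P)$ by an explicit density computation, while the paper states the corresponding invariance $D_f(T^{-1}_\sharp(T_\sharp\pi),T^{-1}_\sharp\tilde P)=D_f(T_\sharp\pi,\tilde P)$ directly (which can equally be justified by two applications of the data processing inequality, Lemma~\ref{le:data}).
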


\begin{proof}
		Set $P= \mu_1 \otimes \dots \otimes \mu_N$ and $\tilde{P} = \tilde{\mu}_1 \otimes \dots \otimes \tilde{\mu}_N$. We have
		\begin{align*}
		\int c \circ T \,d\pi + D_f(\pi, P) &= \int c \circ T \,d(T^{-1}_\sharp (T_\sharp\pi)) + D_f(T^{-1}_\sharp (T_\sharp\pi), T^{-1}_\sharp\tilde{P}) \\
		&= \int c \,d(T_\sharp\pi) + D_f(T_\sharp\pi, \tilde{P})
		\end{align*}
		for any $\pi\in\Pi(\mu_1, \dots, \mu_N)$, hence taking infimum over $\pi\in\Pi(\mu_1, \dots, \mu_N)$ yields $S(\mu_1, \dots, \mu_N, c\circ T) \geq S(\tilde{\mu}_1, \dots, \tilde{\mu}_N, c)$. Symmetric results hold starting from $\tilde\pi\in\Pi(\tilde\mu_1, \dots, \tilde\mu_N)$. Thus $S(\tilde{\mu}_1, \dots, \tilde{\mu}_N, c)=S(\mu_1, \dots, \mu_N, c\circ T)$, and now the formulas for the optimizers follow as well.
\end{proof}

In the simplest case, the optimal couplings for $W_{p}(\mu_{i},\tilde{\mu}_{i})$ are given by invertible maps, and then we can apply Lemma~\ref{lem:invertibleequiv} directly to prove Theorem~\ref{th:second}. In general, we approximate the marginals with measures having that property as detailed next, passing to an augmented space to guarantee that the setting is sufficiently rich. We write $\delta_x$ for the Dirac measure at $x$.

\begin{lemma}
	\label{lem:wloginvertible}
	Let $p\in[1,\infty]$. Let $\bar{X}_i = X_i \times (-1, 1)$ and embed the marginals as $\nu_i := \mu_i \otimes \delta_0$ and $\tilde{\nu}_i := \tilde{\mu}_i \otimes \delta_0$ for $i=1, \dots, N$.  Set $\bar{X} = \prod_{i=1}^N \bar{X}_i$ and define $\bar{c} : \bar{X} \rightarrow \mathbb{R}$ by $\bar{c}(x, u) := c(x)$ for $x\in X$ and $u\in (-1, 1)^{N}$.
	\begin{itemize}
	\item[(i)] We have $S(\mu_1, \dots, \mu_N, c)=S(\nu_1, \dots, \nu_N, \bar{c})$ and the corresponding  optimizers $\pi,\theta$ are related by $\theta = \pi \otimes \delta_0^N$.
	
	If $\tilde{\pi}, \tilde\theta$ are the optimizers for $S(\tilde{\mu}_1, \dots, \tilde{\mu}_N, c)$ and $S(\tilde{\nu}_i, \dots, \tilde{\nu}_N, \bar{c})$, then
		\[
		W_p(\pi, \tilde{\pi}) = W_p(\theta, \tilde{\theta}).
		\]
		\item[(ii)] Given $0 < \epsilon < 1$ and $i=1, \dots, N$, there exist $\nu_i^\epsilon, \tilde\nu_i^\epsilon \in \mathcal{P}(\bar{X}_i)$  with
				\begin{equation}
		\label{eq:approxinvertible}
		W_p(\nu_i, \nu_i^\epsilon) \leq \epsilon, \qquad W_p(\tilde{\nu}_i, \tilde{\nu}_i^\epsilon) \leq \epsilon
		\end{equation}
		and an a.s.\ invertible map $T_i^\epsilon: \bar{X}_i \rightarrow \bar{X}_i$ such that $\tilde{\nu}_i^\epsilon = (T_i^\epsilon)_\sharp \nu_i^\epsilon$ and the corresponding coupling attains $W_p(\nu_i^\epsilon, \tilde{\nu}_i^\epsilon)$.
	\end{itemize} 
\end{lemma}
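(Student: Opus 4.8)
The plan is to handle the two parts by rather different means: part~(i) is a soft ``change of variables'' between isomorphic probability spaces, while part~(ii) is the technical heart and uses the extra coordinate $(-1,1)$ as a source of auxiliary randomness that turns an optimal coupling into an invertible map.

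\emph{Part~(i).} I would first note that the embedding $\iota_i\colon X_i\to\bar X_i$, $x\mapsto(x,0)$, identifies $(X_i,\mu_i)$ with $(\bar X_i,\nu_i)$ up to a $\nu_i$-null set, so that $\pi\mapsto\pi\otimes\delta_0^N$ is a bijection from $\Pi(\mu_1,\dots,\mu_N)$ onto $\Pi(\nu_1,\dots,\nu_N)$: any $\theta\in\Pi(\nu_1,\dots,\nu_N)$ has each $(-1,1)$-coordinate equal to $0$ almost surely, because the corresponding marginal of $\nu_i$ is $\delta_0$, hence $\theta$ is the product of its $X$-marginal with $\delta_0^N$. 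Next I would check that this bijection preserves the objective of~\eqref{eq:defS}: $\int\bar c\,d(\pi\otimes\delta_0^N)=\int c\,d\pi$ by definition of $\bar c$, and $D_f(\pi\otimes\delta_0^N,\nu_1\otimes\cdots\otimes\nu_N)=D_f(\pi,P)$ by the tensorization identity~\eqref{eq:dpi1} applied with the constant kernel $\delta_0^N$. This gives $S(\mu_1,\dots,\mu_N,c)=S(\nu_1,\dots,\nu_N,\bar c)$ and $\theta=\pi\otimes\delta_0^N$ for the optimizers, and the same applied to the tilde data yields $\tilde\theta=\tilde\pi\otimes\delta_0^N$. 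Finally, for the Wasserstein identity I would use that $d_{\bar X,p}((x,0),(y,0))=d_{X,p}(x,y)$: appending the identity coupling of $\delta_0^N$ with itself to any coupling of $\pi,\tilde\pi$ produces a coupling of $\theta,\tilde\theta$ of the same cost, and projecting any coupling of $\theta,\tilde\theta$ to $X\times X$ yields the reverse inequality; $p=\infty$ works verbatim with essential suprema.

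\emph{Part~(ii).} Fix $i$ and let $\kappa_i\in\Pi(\mu_i,\tilde\mu_i)$ attain $W_p(\mu_i,\tilde\mu_i)$. The key input is the standard fact that a Borel probability measure on a Polish space is a pushforward of normalized Lebesgue measure on an interval; applied to $\kappa_i$ on $X_i\times X_i$, it gives a measurable map $h\colon(-\epsilon,\epsilon)\to X_i\times X_i$ with $h_\sharp\lambda_\epsilon=\kappa_i$, where $\lambda_\epsilon$ is normalized Lebesgue measure on $(-\epsilon,\epsilon)$. Setting $f_i:=\proj_1\circ h$ and $g_i:=\proj_2\circ h$ (so $(f_i)_\sharp\lambda_\epsilon=\mu_i$, $(g_i)_\sharp\lambda_\epsilon=\tilde\mu_i$, $(f_i,g_i)_\sharp\lambda_\epsilon=\kappa_i$) and extending $f_i,g_i$ measurably to $(-1,1)$, I would define, for $U\sim\lambda_\epsilon$,
\[
\nu_i^\epsilon:=\Law\big((f_i(U),U)\big),\qquad \tilde\nu_i^\epsilon:=\Law\big((g_i(U),U)\big)\in\mathcal{P}_p(\bar X_i),
\]
and $T_i^\epsilon(x,u):=(g_i(u),u)$ with proposed inverse $(T_i^\epsilon)^{-1}(y,u):=(f_i(u),u)$. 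The verifications are then short: (a)~$W_p(\nu_i,\nu_i^\epsilon)\le\epsilon$ and $W_p(\tilde\nu_i,\tilde\nu_i^\epsilon)\le\epsilon$, via the couplings that keep the $X_i$-component fixed and only move the $(-1,1)$-coordinate by at most $\epsilon$; (b)~$(T_i^\epsilon)_\sharp\nu_i^\epsilon=\tilde\nu_i^\epsilon$ by construction; (c)~$(T_i^\epsilon)^{-1}\circ T_i^\epsilon=\id$ $\nu_i^\epsilon$-a.s.\ and $T_i^\epsilon\circ(T_i^\epsilon)^{-1}=\id$ $\tilde\nu_i^\epsilon$-a.s., because $\nu_i^\epsilon$-a.e.\ point has the form $(f_i(u),u)$ so its $(-1,1)$-coordinate already determines it (and symmetrically for $\tilde\nu_i^\epsilon$); (d)~the graph coupling $(\id,T_i^\epsilon)_\sharp\nu_i^\epsilon$ has cost $\int d_{X_i}(x,y)^p\,\kappa_i(dx,dy)=W_p(\mu_i,\tilde\mu_i)^p$ (the $(-1,1)$-coordinates of the two points agree), which is minimal because projection to $X_i$ is $1$-Lipschitz and hence $W_p(\nu_i^\epsilon,\tilde\nu_i^\epsilon)\ge W_p(\mu_i,\tilde\mu_i)$; so that coupling attains $W_p(\nu_i^\epsilon,\tilde\nu_i^\epsilon)$. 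Once more $p=\infty$ is identical with essential suprema, and boundedness of supports is preserved.

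\emph{Main obstacle.} Everything above is bookkeeping except the parametrization $\kappa_i=(f_i,g_i)_\sharp\lambda_\epsilon$ of the optimal coupling by a single scalar uniform variable, which is exactly where separability of $X_i$ enters (consistent with Remark~\ref{rk:generalMetrics}). The remaining care points are minor: the precise meaning of ``a.s.\ invertible'' inherited from Lemma~\ref{lem:invertibleequiv}, and the (trivial) facts that the product metric on $\bar X_i$ restricts to $d_{X_i}$ on $X_i\times\{0\}$ and dominates $d_{X_i}$ in general.
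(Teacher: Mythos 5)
Your argument is correct, and for part~(ii) it takes a genuinely different route from the paper. The paper discretizes: for $p<\infty$ it approximates $\nu_i,\tilde\nu_i$ by uniform empirical measures on $n$ points (citing an i.i.d.\ sampling result) and then perturbs the auxiliary coordinates to distinct values $u_1,\dots,u_n$ so that the two empirical measures sit on $n$ distinct atoms of equal weight, whence an optimal transport map that is a bijection on the supports exists; for $p=\infty$ it runs a separate quantization argument with a countable partition of small diameter, takes a $W_\infty$-optimal coupling of the quantized measures with weights $w_{j,k}$, and splits atoms via distinct heights $u_{j,k}$ to turn that coupling into an invertible map. Your construction instead parametrizes a $W_p$-optimal coupling $\kappa_i$ directly by a scalar uniform variable, $\kappa_i=(f_i,g_i)_\sharp\lambda_\epsilon$, and stores the parameter in the $(-1,1)$-coordinate, so that the coupling literally becomes the graph of the invertible map $T_i^\epsilon(x,u)=(g_i(u),u)$; optimality of the graph coupling then follows from the $1$-Lipschitz projection onto $X_i$, and the perturbation bound~\eqref{eq:approxinvertible} from moving only the auxiliary coordinate by at most $\epsilon$. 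This buys a unified treatment of $p\in[1,\infty]$ (no case split, no sampling or quantization lemma) and even gives the exact identity $W_p(\nu_i^\epsilon,\tilde\nu_i^\epsilon)=W_p(\mu_i,\tilde\mu_i)$, which is stronger than what the paper's construction yields (and more than the downstream proof of Theorem~\ref{th:second} needs, where $\Delta(\epsilon)\to\Delta$ suffices); the price is the appeal to the standard-Borel fact that any law on a Polish space is a pushforward of Lebesgue measure on an interval, which is exactly where separability enters, consistent with Remark~\ref{rk:generalMetrics}. Your treatment of part~(i) is a spelled-out version of what the paper declares immediate (the fibre over the $\delta_0$-coordinates forces $\theta=\pi\otimes\delta_0^N$, the divergence is preserved by~\eqref{eq:dpi1} with a constant kernel, and the isometric embedding gives the $W_p$ identity), so no discrepancy there.
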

	
\begin{proof}
	(i) follows immediately from the definitions; we prove~(ii). The case $p<\infty$ is standard: for $n$ large enough, there exist $\rho_i, \tilde{\rho}_i \in \mathcal{P}(\bar{X}_i)$ of the form
	\[
	\rho_i = \frac{1}{n} \sum_{k=1}^n \delta_{(x_k, 0)}, \qquad \tilde\rho_i = \frac{1}{n} \sum_{k=1}^n \delta_{(\tilde{x}_k, 0)}
	\]
	such that $W_p(\nu_i, \rho_i) \leq \frac{\epsilon}{2}$ and $W_p(\tilde{\nu}_i, \tilde{\rho}_i) \leq \frac{\epsilon}{2}$; for instance, one can use  suitable realizations of i.i.d.\ samples (e.g., \cite[Corollary~1.1]{Lacker.20}).
	Next, choose distinct $u_1, \dots, u_n \in (0, 1)$ small enough such that the measures 
	\[
	\nu_i^\epsilon = \frac{1}{n} \sum_{k=1}^n \delta_{(x_k, u_k)}, \qquad \tilde\nu_i^\epsilon = \frac{1}{n} \sum_{k=1}^n \delta_{(\tilde{x}_k, u_k)}
	\]
	satisfy $W_p(\rho_i, \nu_i^\epsilon) \leq \frac{\epsilon}{2}$ and $W_p(\tilde{\rho}_i, \tilde{\nu}_i^\epsilon) \leq \frac{\epsilon}{2}$. 
	Then~\eqref{eq:approxinvertible} holds and $\nu_i^\epsilon,\tilde{\nu}_i^\epsilon$ are empirical measures on~$n$ distinct points due to the choice of $u_1, \dots, u_n$. As a result, there is an optimal transport map that is one-to-one on the supports.
	
	Let $p=\infty$. Here a different argument is necessary. (The following also gives an alternate proof for $p<\infty$.) As $X$ is Polish, we can find a dense sequence $(q_k)\subset X$ and a countable measurable partition $(Q_{k})$ of $X$ with $q_{k}\in Q_{k}$ and $\diam Q_{k}\leq \frac{\epsilon}{4}$. Consider the approximations 
	\[
	\rho_i := \sum_{k = 1}^\infty \nu_i(Q_k) \,\delta_{q_k} \otimes \delta_0, \qquad \tilde{\rho}_i := \sum_{k = 1}^\infty \tilde{\nu}_i(Q_k) \,\delta_{q_k} \otimes \delta_0
	\]
	which clearly satisfy $W_\infty(\rho_i, \nu_i) < \frac{\epsilon}{2}$ and $W_\infty(\tilde{\rho}_i, \tilde{\nu}_i) < \frac{\epsilon}{2}$, but may have atoms of unequal mass.
	Let $\rho_i \otimes U_i \in \Pi(\rho_i, \tilde{\rho}_i)$ be a $W_\infty$-optimal coupling, then $U_i : \bar{X}_i \rightarrow \mathcal{P}(\bar{X}_i)$ is a stochastic kernel such that for each $k$,
	\[
	U_i((q_k, 0)) = \sum_{j = 1}^\infty w_{j, k} \, \delta_{q_j} \otimes \delta_0,
	\]
	for some weights $w_{j, k} \geq 0$ with $\sum_{j =1}^\infty w_{j, k} = 1$. Let $\epsilon_0 > 0$ and pick disjoint numbers $u_{j, k} \in (0, \epsilon_{0})$, define
	\[
	\nu_i^{\epsilon} := \sum_{j, k = 1}^\infty \nu_i(Q_k) w_{j, k} \delta_{q_k} \otimes \delta_{u_{j, k}}, \qquad \tilde{\nu}_i^{\epsilon} := \sum_{j, k = 1}^\infty \nu_i(Q_k) w_{j, k} \,\delta_{q_j} \otimes \delta_{u_{j, k}}
	\]
	and observe that $W_\infty(\nu_i^\epsilon, \rho_i) < \frac{\epsilon}{2}$ and  $W_\infty(\tilde\nu_i^\epsilon, \tilde\rho_i) < \frac{\epsilon}{2}$ for $\epsilon_{0}$ sufficiently small (note that $u_{j, k} := 0$ would lead to $\nu_i^\epsilon = \rho_i$ and $\tilde{\nu}_i^\epsilon = \nu_i^\epsilon U_i = \tilde{\rho}_i$). Now \eqref{eq:approxinvertible} holds by the triangle inequality. Define 
	\[
	T_i^\epsilon : \{q_k: k \in \mathbb{N}\} \times \{u_{j, k} : j, k \in \mathbb{N}\} \rightarrow \{q_k: k \in \mathbb{N}\} \times \{u_{j, k} : j, k \in \mathbb{N}\},
	\]
	\[
	T_i^\epsilon(q_k, u_{j, k}) := (q_j, u_{j, k})
	\]
	which is one-to-one as the $u_{j, k}$ are distinct. Moreover, $\rho_i \otimes U_i \in \Pi(\rho_i, \tilde{\rho}_i)$ implies $\tilde{\nu}_i^\epsilon = (T_i^\epsilon)_\sharp \nu_i^\epsilon$, and since $\rho_i \otimes U_i$ attains $W_\infty(\rho_i, \tilde{\rho}_i) = W_\infty(\nu_i^{\epsilon}, \tilde{\nu}_i^{\epsilon})$, the coupling induced by $T_i^\epsilon$ attains $W_\infty(\nu_i^{\epsilon}, \tilde{\nu}_i^{\epsilon})$.
\end{proof}

After these preparations, we are ready to prove Theorem~\ref{th:second}.

\begin{proof}[Proof of Theorem \ref{th:second}]
We detail the proof for \eqref{eq:transportineq}; the argument for~\eqref{eq:transportineq2} is identical.
We shall apply Proposition~\ref{pr:costStability} though the equivalence outlined in Lemma~\ref{lem:invertibleequiv}. To this end, we extend the spaces~$X_{i}$ by the interval $(-1, 1)$ and introduce $\nu_i, \tilde{\nu}_i,\bar{c}$ as in Lemma~\ref{lem:wloginvertible}. In view of Lemma~\ref{lem:wloginvertible}\,(i), it suffices to prove the claim for these data instead of $\mu_i, \tilde{\mu}_i,c$.

	Let $\epsilon > 0$, choose $\nu_i^\epsilon, \tilde{\nu}_i^\epsilon,T^\epsilon$ as in Lemma~\ref{lem:wloginvertible}\,(ii) and denote by $\theta^\epsilon, \tilde{\theta}^\epsilon, \hat{\theta}^\epsilon$ the respective optimizers of $S_{\rm ent}(\nu_1^\epsilon, \dots, \nu_N^\epsilon, \bar{c})$ and $S_{\rm ent}(\tilde\nu_1^\epsilon, \dots, \tilde\nu_N^\epsilon, \bar{c})$ and $S_{\rm ent}(\nu_1^\epsilon, \dots, \nu_N^\epsilon, \bar{c} \circ T^\epsilon)$, respectively. Noting that $\Lip_{p}(\bar{c}) = \Lip_p(c)$ and setting
  $
	\Delta(\epsilon) := W_{p}(\nu_1^\epsilon, \dots, \nu_N^\epsilon;\tilde\nu_1^\epsilon, \dots, \nu_N^\epsilon)
	$, Lemma \ref{lem:costislp} yields 
	\[
	\|\bar{c} - \bar{c} \circ T^\epsilon\|_{L^p(P)} \leq \Lip_p(c) \,\Delta(\epsilon)
	\]
	and thus Proposition~\ref{pr:costStability} shows that
	\begin{align}
	\label{eq:t1ineq_final}
	W_q(\hat{\theta}^\epsilon, \theta^\epsilon) 
	&\leq C_q \left(\frac{1}{2}\right)^{\frac{1}{2q}} \left(a^\frac{1}{p} \Lip_p(c) \,\Delta(\epsilon) \right)^{\frac{p}{(p+1)q}}.
	\end{align}
	As $\tilde{\theta}^\epsilon = {T^\epsilon}_\sharp \hat{\theta}^\epsilon$ by Lemma~\ref{lem:invertibleequiv} and $T_i^\epsilon$ attains $W_p(\nu_i^\epsilon, \tilde{\nu}_i^\epsilon)$, it follows by the same calculation as in the proof of Theorem~\ref{th:opti} that
	\[
	W_q(\tilde{\theta}^\epsilon, \hat{\theta}^\epsilon) \leq N^{(\frac{1}{q} - \frac{1}{p})} W_p(\tilde{\theta}^\epsilon, \hat{\theta}^\epsilon) \leq N^{(\frac{1}{q} - \frac{1}{p})} \Delta(\epsilon).
	\]
	Combining the two estimates, we find that
	\begin{align*}
	W_q(\theta^\epsilon, \tilde{\theta}^\epsilon) &\leq W_q(\theta^\epsilon, \hat{\theta}^\epsilon) + W_q(\hat\theta^\epsilon, \tilde{\theta}^\epsilon)\\ &\leq N^{(\frac{1}{q} - \frac{1}{p})} \Delta(\epsilon) + C_q \left(\frac{1}{2}\right)^{\frac{1}{2q}} \left(a^\frac{1}{p} \Lip_p(c) \,\Delta(\epsilon) \right)^{\frac{p}{(p+1)q}}.
	\end{align*}
	Letting $\epsilon \rightarrow 0$, the left-hand side converges to $W_q(\pi^*, \tilde{\pi}^*)$ by  Theorem~\ref{th:opti} and Lemma~\ref{lem:wloginvertible}\,(ii), while $\Delta(\epsilon)\to \Delta$ by construction. 
	The claim on sharpness is discussed in Example~\ref{ex:sharpness} below.
\end{proof}

Finally, we exhibit a family of examples for which the constant~$\ell$ of Theorem~\ref{th:second} is optimal.

\begin{example}[Sharpness of $\ell$ in Theorem~\ref{th:second}.]
	\label{ex:sharpness}
	On $X = [-1,1]^{2}$, let
	\begin{align*}
	\mu_1 = \mu_2 = \frac{1}{2} \left( \delta_{-1} + \delta_1 \right), \qquad \tilde{\mu}_1 = \tilde{\mu}_2 = \frac{1}{2} \left( \delta_{-1+\eps} + \delta_{1 - \eps} \right),
	\end{align*}
	where $\eps\in(0,1/2)$ is a parameter. We define the cost function $c=c(\eps)$ by
	\begin{align*}
	c(-1, -1)& &=& &c(1, 1) &&=& &c(-1+\eps, 1-\eps) &&=& &c(1-\eps, -1+\eps) &&= 0, \\
	c(1, -1) &&=& &c(-1, 1) &&=& &c(-1+\eps, -1+\eps)& &=& &c(1-\eps, 1-\eps) &&= \eps,
	\end{align*}
  then $c$ is Lipschitz with constant $\Lip_{\infty}(c)=1$. Setting $\alpha(\eps) := \frac{\exp(\eps)}{1+\exp(\eps)}$, we calculate the optimizers $\pi^*,\tilde{\pi}^*$ of $S_{\textrm{ent}}(\mu_1,\mu_2, c)$ and $S_{\textrm{ent}}(\tilde{\mu}_1,\tilde{\mu}_2, c)$ 
   to be
	\begin{align*}
	\pi^* &= \frac{\alpha(\epsilon)}{2} \big(\delta_{(-1, -1)} + \delta_{(1, 1)}\big) + \frac{1-\alpha(\epsilon)}{2} \big(\delta_{(-1, 1)} + \delta_{(1, -1)}\big),  \\
	\tilde{\pi}^* &=  \frac{1-\alpha(\epsilon)}{2} \big(\delta_{(-1+\eps, -1+\eps)} + \delta_{(1-\eps, 1-\eps)}\big) + \frac{\alpha(\epsilon)}{2} \big(\delta_{(1-\eps, -1+\eps)} + \delta_{(-1+\eps, 1-\eps)}\big).
	\end{align*}
	Next, we find
	\[
	  W_1(\pi^*, \tilde{\pi}^*) = 2 ( 1- \alpha(\eps)) 2 \eps + (2\alpha(\eps) - 1)2
	\]
	by observing that an optimal coupling $\kappa\in\Pi(\pi^*,\tilde{\pi}^*)$ is to move a total mass of $2 ( 1- \alpha(\eps))$ over a $d_{X,1}$-distance of $2\eps$ and mass $2\alpha(\eps) - 1$ over distance $(2-\eps) + \eps=2$. In view of $\alpha(\eps) = \frac{1}{2} + \frac{\eps}{4} + \mathcal{O}(\eps^3)$ as $\eps\to0$, we deduce
	\[
	  W_1(\pi^*, \tilde{\pi}^*) = 3 \eps + \mathcal{O}(\eps^2).
	\]
  On the other hand, clearly
	\[
	   W_\infty(\mu_1,\mu_2;\tilde\mu_1,\tilde\mu_2) = \eps.
	\]
	In summary, any constant $\ell$ such that $W_1(\pi^*, \tilde{\pi}^*)\leq \ell W_\infty(\mu_1,\mu_2;\tilde\mu_1,\tilde\mu_2)$ holds in the above example for all~$\eps$, has to satisfy $\ell\geq3$.
	
	It remains to see that we attain $\ell=3$ in the last assertion of Theorem~\ref{th:second}. For $q=1$, Lemma~\ref{le:transportineq}\,(i) with $\diam_{1}(X_{2})=\diam ([-1,1])=2$ shows that~\eqref{eq:transportineq} is satisfied with $C_1=\sqrt2$. Hence, the formula in  Theorem~\ref{th:second} reads
	$$
	  \ell= N + (C_1/\sqrt2)\Lip_{\infty}(c)= 2+ 1 =3
	$$
	as desired.
	
	We remark that this example can be extended to more general parameters. Replacing~$c$ by~$L  c$ for some $L > 0$ leads to a different Lipschitz constant in the definition of~$l$. Replacing $\alpha(\varepsilon)$ by $\alpha(L\varepsilon)$ in the formula for $W_1(\pi^*, \tilde{\pi}^*)$, one finds that the constant~$l$ is again sharp. Similarly, replacing $[-1, 1]$ by $[-K, K]$ for some $K>0$ and replacing $1$ by $K$ in the definition of the marginals, we find that only the constant $C_1$ changes in the definition of~$l$, while for $W_1(\pi^*, \tilde{\pi}^*)$ one replaces the final $2$ by $2K$. Again, the constant~$l$ remains sharp.
\end{example}
	
\subsection{Application to Sinkhorn's Algorithm}

\begin{proof}[Proof of Theorem \ref{th:sinkhorn}]
  We first observe that $\pi^{n}$ is the optimizer of the problem $S_{\rm ent}(\pi^{n}_1, \pi^{n}_2, c)$:
		\begin{align*}
		\pi^{n} &= \argmin_{\pi \in \Pi(\pi^{n}_1, \pi^{n}_2)} D_{\rm KL}(\pi, \pi^{0}) \\
		&= \argmin_{\pi \in \Pi(\pi^{n}_1, \pi^{n}_2)} \int c \,d\pi + D_{\rm KL}(\pi, \mu_{1}\otimes\mu_{2}) \\
		&= \argmin_{\pi \in \Pi(\pi^{n}_1, \pi^{n}_2)} \int c \,d\pi + D_{\rm KL}(\pi, \pi^{n}_1\otimes \pi^{n}_2),
		\end{align*}  
  where the last step used Remark~\ref{rk:otherRef}. (The first identity is well known; e.g., it follows from the fact that by construction, $d\pi^{n}/d\pi^{0}$ admits a factorization $a(x_{1})b(x_{2})$.) To apply our stability results, we require the convergence of the marginals in~$W_{p}$. Indeed, $D_{\rm KL}(\pi^{n}_i, \mu_i)\to0$ holds by a standard entropy calculation, see for instance~\cite{Ruschendorf.95}. More precisely, we have
		\begin{equation}\label{eq:Leger}
		D_{\rm KL}(\pi^{n}_i, \mu_i) \leq 2 \frac{D_{\rm KL}(\pi^*, P_c)}{n}
		\end{equation}
		according to~\cite[Corollary~1]{Leger.21}.		
		By the exponential moment condition on $\mu_i$ and \cite[Corollary~2.3]{BolleyVillani.05}, \eqref{eq:Leger} yields
		\[
		W_p(\pi^n_i, \mu_i) \leq C_{0} C_{\mu_{i}} (n^{-\frac{1}{p}} + n^{-\frac{1}{2p}}) \qquad\mbox{where}
		\]
		\begin{align*}
		C_0 &:=\max\left\{\left(2 D_{\rm KL}(\pi^*, P_c)\right)^{\frac{1}{p}}, \left(2 D_{\rm KL}(\pi^*, P_c)\right)^{\frac{1}{2p}}\right\},\\
		C_{\mu_i} &:= 2 \inf_{x_0 \in X_i, \alpha > 0} \left( \frac{1}{\alpha} \Big(\frac{3}{2} + \log \int \exp(\alpha d_{X_i}(x_0, x_i)) \,\mu_i(dx_i)\Big)\right)^{\frac{1}{p}}.
		\end{align*}
  As a result,
		\begin{equation}\label{eq:SinkDelta}
		  \Delta := \max_{i=1,2}W_p(\pi^{n}_i, \mu_i) \leq C_{0} \max\{C_{\mu_{1}},C_{\mu_{2}}\} (k^{-\frac{1}{p}} + k^{-\frac{1}{2p}}).
		\end{equation}
		We remark that $\Delta=W_{p}(\pi^{n}_1,\pi^{n}_2;\mu_1,\mu_2)$ as $W_p(\pi^{n}_1, \mu_1)=0$ or $W_p(\pi^{n}_2, \mu_2)=0$ for each~$n$, consistent with our previous notation. 
		By~\eqref{eq:SinkDelta}, $\pi^{n}_i$ has a finite $p$-th moment. Noting also that
		\begin{equation}\label{eq:ref_split}D_{\rm KL}(\pi^{n}, \mu_{1}\otimes\mu_{2}) = D_{\rm KL}(\pi^{n}, \pi^{n}_1 \otimes \pi^{n}_2) + \sum_{i=1}^2 D_{\rm KL}(\pi^{n}_i, \mu_i),\end{equation}
		assertion~(i) thus follows directly from Theorem~\ref{th:valueConv}\,(i). 
		
		Regarding~(ii), note that the $p$-th moments of $\pi^{n}_i$ are bounded uniformly in~$n$ due to~\eqref{eq:SinkDelta}. In view of Lemma~\ref{le:ccond}, the cost function~$c$ thus satisfies~\eqref{eq:ccond} with a uniform constant~$L$ for the marginals $(\pi^{n}_1, \pi^{n}_2)_{n}$ as well as $(\mu_1, \mu_2)$.
		Using also~\eqref{eq:Leger} and~\eqref{eq:ref_split}, Theorem~\ref{th:valueConv}\,(ii)  yields 
		$$
		| \mathcal{F}(\pi^*) - \mathcal{F}(\pi^{n}) | \leq L\,\Delta + 2 D_{\rm KL}(\pi^*, P_c) n^{-1}.
		$$
    In view of~\eqref{eq:SinkDelta}, the claimed rate for $| \mathcal{F}(\pi^*) - \mathcal{F}(\pi^{n}) |$ follows. 
		Finally, $({\rm I}_{q}^{'})$ holds with constant $C_q^{'}$ by Lemma~\ref{le:transportineq}\,(iii) and thus Theorem~\ref{th:opti} yields 
		$$
		W_q(\pi^*, \pi^{n}) \leq 2^{(\frac{1}{q} - \frac{1}{p})} \, \Delta + C_q^{'} (2L)^{1/q} \,\Delta^{\frac{1}{q}} + C_q^{'} L^{\frac{1}{2q}} \,\Delta^{\frac{1}{2q}},
		$$
		so that the claimed rate for $W_q(\pi^*, \pi^{n})$ follows via~\eqref{eq:SinkDelta}.
\end{proof}

\newcommand{\dummy}[1]{}

\end{document}